\newcommand{\boxd}{\dim_{\mathrm{B}}}
\newcommand{\Haus}{\dim_{\mathrm{H}}}
\newcommand{\Log}{\mathrm{Log}}
\newtheorem*{thm*}{Theorem}
\newtheorem*{conj*}{Conjecture}
\newtheorem{thm}{Theorem}[section]
\newtheorem{lma}[thm]{Lemma}
\newtheorem{cor}[thm]{Corollary}
\newtheorem{conj}[thm]{Conjecture}
\newtheorem{rem}[thm]{Remark}
\newtheorem{ques}[thm]{Question}
\begin{document}
	\title[Projections and number theory]{Fractal projections with an application in number theory}
	% \title[short text for running head]{full title}
	%    Only \author and \address are required; other information is
	%    optional.  Remove any unused author tags.
	%    author one information
	% \author[short version for running head]{name for top of paper}
	
	\author{Han Yu}
	\address{Han Yu\\
		Department of Pure Mathematics and Mathematical Statistics\\University of Cambridge\\CB3 0WB \\ UK }
	\curraddr{}
	\email{hy351@maths.cam.ac.uk}
	\thanks{}
	
	%    \subjclass is required.
	\subjclass[2010]{Primary:  11A63, 11Z05, 11K55, 28A80}
	
	\keywords{prime factors of binomial coefficients, Graham's question, fractal projection, Cantor set, Schanuel's conjecture}
	
	\maketitle
	
	\begin{abstract}
		In this paper, we discuss a connection between geometric measure theory and number theory. This method brings a new point of view for some number-theoretic problems concerning digit expansions.  Among other results, we showed that for each integer $k,$ there is a number $M>0$ such that if $b_1,\dots,b_k$ are multiplicatively independent integers greater than $M$, there are infinitely many integers whose base $b_1,b_2,\dots,b_k$ expansions all do not have any zero digits.
	\end{abstract}
	
	\maketitle
	%\tableofcontents
	\allowdisplaybreaks
	
	\section{Prime factors of binomial coefficients: Graham's question}
	In the 1970s, Erd\H{o}s, Graham, Ruzsa and Straus proved that there are infinitely many integers $n$ such that $\binom{2n}{n}$ is coprime with $3\times 5=15,$ see \cite{EGRS75}. Later on, Graham asked the following question.
	\begin{ques}[Graham's binomial coefficients problem]\label{QG}
		Are there infinitely many integers $n\geq 1$ such that the binomial coefficient $\binom{2n}{n}$ is coprime with $105=3\times 5\times 7?$
	\end{ques}
	\begin{rem}
		According to \cite{OEIS7}, Graham offers $1000\$$ to the first person with a solution.
	\end{rem}
	This problem turns out to be related with digit expansions of numbers in different bases. To be precise, let $b_1,\dots,b_k\geq 2$ be $k\geq 2$ different integers. For each $i\in\{1,\dots,k\},$ let $B_i\subset \{0,\dots,b_i-1\}$ be a subset of digits in base $b_i.$ Let $n,b\geq 2$ be integers, we write $D_b(n)$ for the set of digits used in representing $n$ in base $b.$ We define the  set of integers
	\[
	N_{b_1,\dots,b_k}^{B_1,\dots,B_k}=\{n\in\mathbb{N}: \forall i\in\{1,\dots,k\}, D_{b_i}(n)\subset B_i     \}.
	\]
	Thus $N_{b_1,\dots,b_k}^{B_1,\dots,B_k}$ contains integers with very special digit expansions simultaneously in many different bases. We will call such numbers to be with restricted digits. The original motivation of this type of problem is to study prime factors of $\binom{2n}{n}.$ The connection between prime factors of $\binom{2n}{n}$ and digits expansions of $n$ was established by Kummer in \cite{K52}.
	\begin{thm}[Kummer]\label{thm: Kummer}
		Let $p$ be a prime number. Then $p \not| \binom{2n}{n}$ if and only if the $p$-ary expansion of $n$ contains only digits $\leq (p-1)/2.$
	\end{thm}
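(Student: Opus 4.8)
The plan is to reduce the divisibility question to a statement about base-$p$ digit sums via Legendre's formula for the $p$-adic valuation of a factorial. Write $s_p(m)$ for the sum of the base-$p$ digits of $m$. Legendre's formula states
\[
v_p(m!) \;=\; \sum_{i\ge 1}\Big\lfloor \frac{m}{p^i}\Big\rfloor \;=\; \frac{m-s_p(m)}{p-1},
\]
where the second equality follows by writing $m=\sum_j a_j p^j$ and resumming; I would include the one-line derivation. Applying this to $v_p\binom{2n}{n}=v_p((2n)!)-2v_p(n!)$ yields
\[
v_p\binom{2n}{n}\;=\;\frac{2n-s_p(2n)}{p-1}-\frac{2(n-s_p(n))}{p-1}\;=\;\frac{2s_p(n)-s_p(2n)}{p-1},
\]
so that $p\nmid\binom{2n}{n}$ if and only if $s_p(2n)=2s_p(n)$.

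The next step is the combinatorial core: comparing $s_p(2n)$ with $2s_p(n)$ through the carries occurring when $n$ is added to itself in base $p$. I would first prove the elementary identity $s_p(a)+s_p(b)=s_p(a+b)+(p-1)c$, where $c$ is the number of carries in the base-$p$ addition of $a$ and $b$; this is an induction on the number of digit positions, each carry contributing a net loss of exactly $p-1$ to the digit sum (and the possible extra leading digit of $a+b$ being accounted for automatically). Taking $a=b=n$ gives $2s_p(n)-s_p(2n)=(p-1)c_n$ with $c_n$ the number of carries in $n+n$, hence $p\nmid\binom{2n}{n}$ precisely when that addition has no carries.

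Finally I would translate ``no carries'' into the stated digit condition. Writing $n=\sum_j a_j p^j$ with $0\le a_j\le p-1$ and running the addition $n+n$ from the least significant end, one checks by induction that every carry is $0$ if and only if $2a_j<p$ for all $j$: if all $2a_j<p$ then inductively no carry is ever generated, while if $2a_j\ge p$ for some $j$ then position $j$ emits a carry no matter what enters from below. Since the $a_j$ are integers and $p$ is odd, $2a_j<p$ is equivalent to $a_j\le (p-1)/2$, which is exactly the asserted condition; for $p=2$ it forces every digit to vanish, matching $v_2\binom{2n}{n}=s_2(n)\ge 1$ for $n\ge 1$.

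I do not expect a genuine obstacle here --- the result is classical --- and the only points requiring mild care are the carry-counting identity (getting the factor $p-1$ right) and the vanishing-carry induction, including the treatment of the leading digit.
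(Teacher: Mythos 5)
Your proof is correct. Note that the paper does not actually prove this theorem; it cites Kummer's original 1852 paper \cite{K52} and uses the statement as a black box, so there is no proof in the paper to compare against. Your argument is the standard modern proof: it derives the digit condition from Legendre's formula $v_p(m!)=(m-s_p(m))/(p-1)$, converts $v_p\binom{2n}{n}=0$ into $s_p(2n)=2s_p(n)$, and then uses the carry-counting identity $s_p(a)+s_p(b)=s_p(a+b)+(p-1)c$ with $a=b=n$ to show this is equivalent to the addition $n+n$ in base $p$ having no carries, which happens precisely when every digit of $n$ is at most $(p-1)/2$. This is in fact a special case of the full Kummer theorem (that $v_p\binom{m+n}{n}$ equals the number of carries in $m+n$ base $p$), and all the steps you outline --- including the factor of $p-1$ per carry and the treatment of a possible leading carry, as well as the degenerate case $p=2$ --- are correct and would survive the routine inductions you defer.
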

	
	Due to Kummer's theorem, we see that for Graham's question, one needs to study the set $N_{3,5,7}^{B_3,B_5,B_7}$ where
	\[
	B_p=\{0,\dots,(p-1)/2\}.
	\]
	It is precisely the set of integers $n$ with $\binom{2n}{n}$ being coprime with $3,5,7.$ Graham's question is open, but there is some progress. In \cite{BY19}, it was proved that, under Schanuel's conjecture (see Conjecture \ref{schanuel} below and \cite{A71}), 
	\[
	\#N_{3,5,7}^{B_3,B_5,B_7}\cap [1,N]\leq N^{0.026}
	\]
	for all sufficiently large $N$. So we can say that there are not `too many' integers $n$ such that $\binom{2n}{n}$ is coprime with $3,5,7.$ Unconditionally, at least one of the results listed below holds for all large enough $N$:
	\[
	\#N_{3,5,7}^{B_3,B_5,B_7}\cap [1,N]\leq N^{0.026},
	\]
	\[
	\#N_{3,5,11}^{B_3,B_5,B_{11}}\cap [1,N]\leq N^{0.061},
	\]
	\[
	\#N_{3,5,13}^{B_3,B_5,B_{13}}\cap [1,N]\leq N^{0.073}.
	\]
	The results in \cite{BY19} suggest a more quantitative conjecture as follows.
	\begin{conj}\label{fu}
		Let $p_1,\dots,p_k$ be $k\geq 2$ different odd prime numbers. For each $i\in\{1,\dots,k\},$ let \[
		B_{i}=\{0,\dots,(p_i-1)/2\}.
		\]
		Consider the number
		\[
		s=\sum_{i=1}^k \frac{\log \#B_i}{\log p_i}=\sum_{i=1}^k \frac{\log (p_i+1)-\log 2}{\log p_i}.
		\]
		If $s\in (k-1,k),$ then for each $\epsilon>0$ there is a constant $C_\epsilon>1$ such that
		\begin{align}\label{*}
		C^{-1}_\epsilon N^{s-(k-1)-\epsilon}\leq \# N_{p_1,\dots,p_k}^{B_1,\dots,B_k}\cap [1,N]\leq C_\epsilon N^{s-(k-1)+\epsilon}
		\end{align}
		for all integers $N\geq 2.$ If $s<k-1$ then $N_{p_1,\dots,p_k}^{B_1,\dots,B_k}$ is finite.
	\end{conj}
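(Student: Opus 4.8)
\noindent\emph{A plan of attack.} The idea is to transcribe the counting problem into a statement about linear slices of a product of self-similar Cantor sets, and then reduce \eqref{*} and the finiteness assertion to the local behaviour, at one arithmetically-distinguished point, of a single projected measure.

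\smallskip
\noindent\emph{Step 1: the dictionary.} For $1\le i\le k$ let $K_i=\{\sum_{j\ge 1}d_jp_i^{-j}:d_j\in B_i\}\subset[0,1]$; the maps $x\mapsto(x+d)/p_i$, $d\in B_i$, satisfy the open set condition, so $\Haus K_i=\boxd K_i=s_i:=\log\#B_i/\log p_i$, and for $K:=K_1\times\cdots\times K_k$ one has $\Haus K=\boxd K=s$. Given $N$, put $m_i=\lfloor\log N/\log p_i\rfloor$ (so $p_i^{m_i}\asymp N$) and $\Phi_N(n)=(np_1^{-m_1},\dots,np_k^{-m_k})$. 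Then $\Phi_N$ maps $\{1,\dots,N\}$ to an arithmetic progression of $N$ points with common difference $u_N=(p_1^{-m_1},\dots,p_k^{-m_k})$, $|u_N|\asymp N^{-1}$, lying on the line $\ell_N=\mathbb{R}u_N$ through the origin; and the condition $D_{p_i}(n)\subset B_i$ for all $i$ is, up to the scale $N^{-1}$, the same as $\Phi_N(n)$ belonging to a fixed bounded copy of $K$. Writing $\mathcal N_\delta$ for the $\delta$-covering number and $T_\delta(\ell)$ for the $\delta$-neighbourhood of a line $\ell$, this gives
\[
\#\big(N_{p_1,\dots,p_k}^{B_1,\dots,B_k}\cap[1,N]\big)\ \asymp\ \mathcal N_{N^{-1}}\!\big(K\cap T_{N^{-1}}(\ell_N)\big).
\]
Since $u_N=N^{-1}(p_1^{\{\log N/\log p_1\}},\dots,p_k^{\{\log N/\log p_k\}})$, the direction of $\ell_N$ ranges, as $N$ varies, over a fixed compact set $\Theta$ of directions in $\mathbb{R}^k_{>0}$. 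So the task is the slicing estimate $\mathcal N_\delta(K\cap T_\delta(\ell))=\delta^{-(s-(k-1))+o(1)}$, uniformly over lines $\ell$ through $0$ with direction in $\Theta$, when $s\in(k-1,k)$; and a summable-in-scale upper bound when $s<k-1$.

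\smallskip
\noindent\emph{Step 2: reduction to one projected measure.} Equip $K$ with the natural $s$-regular measure $\mu=\mu_1\times\cdots\times\mu_k$, $\mu_i$ the uniform Bernoulli measure on $K_i$, and for a direction $v$ let $\pi_v:\mathbb{R}^k\to v^\perp\cong\mathbb{R}^{k-1}$ be orthogonal projection, so that $T_\delta(\mathbb{R}v)=\pi_v^{-1}(B_\delta(0))$. Because every $\delta$-cube meeting $K$ has $\mu$-mass $\asymp\delta^s$,
\[
\mathcal N_\delta\big(K\cap T_\delta(\mathbb{R}v)\big)\ \asymp\ \delta^{-s}\,(\pi_v)_*\mu\big(B_\delta(0)\big),
\]
so everything is governed by the behaviour of $(\pi_v)_*\mu$ at $0$, for each $v\in\Theta$. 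When $s\in(k-1,k)$, the two-sided bound \eqref{*} becomes equivalent to $(\pi_v)_*\mu$ having, in a neighbourhood of $0$, a density bounded above and below --- equivalently $(\pi_v)_*\mu(B_\delta(0))\asymp\delta^{k-1}$. When $s<k-1$, finiteness follows once the number of valid $n$ with exactly $m+1$ base-$p_1$ digits is $\lesssim_\epsilon p_1^{m(s-(k-1))+\epsilon}$, which is summable over $m$; this is again an upper bound of the same type, expressing that $(\pi_v)_*\mu$ is not abnormally concentrated along the diagonal. In particular the two halves of \eqref{*} draw on the same input.

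\smallskip
\noindent\emph{Step 3: tools and the main obstacle.} Absolute continuity of $(\pi_v)_*\mu$, and the matching Frostman-type regularity, belong to the circle of Marstrand--Mattila projection theorems and their modern refinements for self-similar measures (entropy and $L^q$-dimension increase under resolution of overlaps, transversality, and --- when the projected measure decays in Fourier --- continuity of its density, which is what permits evaluation at the single point $0$). Here the multiplicative independence of $p_1,\dots,p_k$ is structurally essential: it excludes the \emph{exact} arithmetic resonances between the base-$p_i$ systems that would force $(\pi_v)_*\mu$ to be singular, and it supplies the transversality needed to run these theorems for \emph{almost every} $v$. The difficulty --- and the reason this is a conjecture --- is that one needs the conclusion for \emph{every} $v\in\Theta$, i.e.\ for the directions literally cut out by $\log p_1,\dots,\log p_k$: one must certify that the whole curve $\theta\mapsto(p_1^{\theta_1},\dots,p_k^{\theta_k})$ avoids the Lebesgue-null exceptional set of the projection theorem, which is a quantitative transcendence problem about linear forms in $\log p_1,\dots,\log p_k$. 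Under Schanuel's conjecture (Conjecture \ref{schanuel}) these logarithms are maximally independent and every offending near-resonance disappears, which should give \eqref{*}; unconditionally, one retreats to the regime where the Cantor sets $K_i$ are thick --- the $p_i$ large, as in the theorem announced in the abstract --- since then $s$ is close to $k$, comparability of $(\pi_v)_*\mu$ with Lebesgue measure near $0$ is stable under perturbing $v$, and no arithmetic input on the $\log p_i$ beyond multiplicative independence is required. As even the upper bound is presently only conditional for $\{3,5,7\}$, the realistic target is to establish \eqref{*} for $p_i$ that are sufficiently large and sufficiently independent, and to reduce the general case to an explicit --- if currently inaccessible --- Diophantine statement about $\log p_1,\dots,\log p_k$.
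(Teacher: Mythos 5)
The statement you are addressing is a \emph{conjecture} (Conjecture~\ref{fu}); the paper offers no proof of it. Indeed, the remark immediately after it states that only the rightmost inequality of~\eqref{*} is known, and only conditionally on Schanuel's conjecture (from~\cite{BY19}), while the leftmost inequality and the finiteness statement remain open and are linked to Furstenberg's intersection problem. So there is no proof in the paper to compare your argument against, and your own framing --- ``a plan of attack'' --- is the honest one.

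That said, your sketch is consistent with the framework the paper actually uses elsewhere. The dictionary in your Step~1 --- dilating $n\mapsto(np_1^{-m_1},\dots,np_k^{-m_k})$ so that the admissible $n$ become lattice points on a line $\ell_N$ through the origin, and identifying the count with a covering number of $K\cap T_{N^{-1}}(\ell_N)$ --- is essentially the translation the paper performs in the proofs of Theorems~\ref{Number} and~\ref{EGRSII} (via the radial projection $\Pi_0$ and the lines $l_k$ with direction $(1,5^{\{k\log 3/\log 5\}},7^{\{k\log 3/\log 7\}})$), and Step~2 is the standard conversion from covering numbers to the mass of a projected Frostman measure near $0\in v^\perp$. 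Step~3 correctly identifies the core obstruction: one needs the slicing/projection estimate at the \emph{specific} directions cut out by $(\log p_1,\dots,\log p_k)$, not merely for Lebesgue-a.e.\ direction, which is where the Diophantine (Schanuel-type) input enters.

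Two caveats you should be more careful about. First, you present the upper and lower halves of~\eqref{*} as ``drawing on the same input,'' but they really don't: the upper bound is a statement that no line is \emph{exceptionally heavy} for the slicing, which is an entropy/Furstenberg-type statement in the spirit of~\cite{Wu},~\cite{Sh} and is exactly what~\cite{BY19} proves conditionally; the lower bound requires the line to actually \emph{see} a positive proportion of the mass, and there is no known mechanism (even heuristically) that gives such a uniform, pointwise lower bound for $(\pi_v)_*\mu(B_\delta(0))$ at a prescribed family of directions $v$. Absolute continuity or Frostman regularity of $(\pi_v)_*\mu$ for a.e.\ $v$, even with a Fourier-decay upgrade, does not by itself bound the density at the single point $0\in v^\perp$ for a single $v$; your remark that ``continuity of the density permits evaluation at $0$'' quietly assumes the strongest form of what is to be proved. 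Second, even the paper's Conjecture~\ref{RadialConj} (nonempty interior of $\Pi_x(A)$) would only yield the \emph{qualitative} consequence of the left inequality of~\eqref{*} when $s>k-1$ --- infinitude of $N_{p_1,\dots,p_k}^{B_1,\dots,B_k}$, as in Theorem~\ref{Number} --- not the quantitative exponent $s-(k-1)-\epsilon$; so a proof along your lines needs a genuinely stronger, quantitative radial-projection statement, which you should flag explicitly as an additional hypothesis. Finally, the $\asymp$ in your covering-count dictionary deserves a justification (the lattice points sit \emph{on} $\ell_N$, not merely in the tube; the identification works because the lattice spacing matches the cylinder structure of $K$ exactly, which is worth spelling out rather than asserting).
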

	\begin{rem}
		The rightmost inequality of $(\ref{*})$ was proved in \cite{BY19} under Schanuel's conjecture. Thus the open problem is the leftmost inequality of $(\ref{*})$ and the finiteness statement. This is closely related to a Furstenberg's problem, see \cite{Fu2}, \cite{Wu} and \cite{Sh}.
	\end{rem}
	
	\section{Results in this paper}
	In this paper, we consider Graham's question and other problems related to digit expansions of numbers in different bases. We relate it to projections of fractal sets, a well-studied topic in geometric measure theory. This connection does not directly provide us with new results. It merely represents some number-theoretic problems using fractals (self-similar sets) and translates the problems into geometric properties (intersections, slices, projections) of those fractals. We will provide a more detailed discussion on this topic in Sections \ref{GMT} and \ref{Projection}. Here, we only need to know that $\Pi_x$ for $x\in\mathbb{R}^d$ stands for the map
	\[
	y\in\mathbb{R}^d\setminus \{x\}\to \Pi_x(y)=\frac{x-y}{|x-y|}\in S^{d-1}.
	\]
	Intuitively speaking, let $A\subset \mathbb{R}^d.$ Then $\Pi_x(A)$ is what an observer can see of $A$ at a certain position $x\in\mathbb{R}^d.$ In what follows, we say that a list of numbers $a_1,\dots,a_d$ are multiplicatively independent if they are not $0$ nor $1$ and $1,\log a_2/\log a_1,\dots,\log a_d/\log a_1$ are linearly independent over the field of rational numbers. See Section \ref{SelfSimilar} for the definition of self-similar sets and the open set condition. See \cite[Chapter 2]{Fa} for the definition of  Hausdorff dimension ($\Haus$).
	
	\begin{conj}\label{RadialConj}
		Let $A\subset\mathbb{R}^d,d\geq 2$ be a Cartesian product of self-similar sets in $\mathbb{R}$ with the open set condition and uniform contraction ratios. Suppose further that the contraction ratios are multiplicatively independent. If $\Haus A>d-1,$ then $\Pi_x(A)$ has non-empty interior for all $x\in\mathbb{R}^d.$
	\end{conj}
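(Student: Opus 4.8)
\emph{Proof plan.} The plan is to localise and linearise the radial projection, to use the product self-similar structure of $A$ to replace it by a \emph{linear} projection onto a hyperplane, and then to reduce the whole statement to the claim that certain ``sheared products'' of self-similar sets have non-empty interior as soon as their total dimension exceeds the ambient dimension. Almost all of the difficulty will sit in that last claim, which is essentially of Furstenberg type.

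\emph{Step 1: localising and linearising.} Fix $x \in \mathbb{R}^d$ and write $A = C_1 \times \cdots \times C_d$, where $C_i \subset \mathbb{R}$ is self-similar, satisfies the open set condition, and has uniform contraction ratio $r_i$. For each $i$ the set $\{\, y \in A : y_i = x_i \,\}$ is a coordinate slice of $A$ of Hausdorff dimension at most $\Haus A - \Haus C_i < \Haus A$; since $\Haus A > d - 1 > 0$, one may pick $\tilde y \in A$ with $\tilde y_i \neq x_i$ for every $i$ (in particular $\tilde y \neq x$). On a chart of $S^{d-1}$ around the direction of $\tilde y - x$ the map $\Pi_x$ becomes the real-analytic map
\[
F(y) = \Bigl( \tfrac{y_2 - x_2}{y_1 - x_1}, \dots, \tfrac{y_d - x_d}{y_1 - x_1} \Bigr),
\]
which is smooth with bounded derivatives on a ball $B(\tilde y, 2\epsilon)$ for $\epsilon$ small enough, because $\tilde y_1 \neq x_1$. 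Since each $C_i$ is self-similar with a uniform ratio and the OSC, there is for each large $n$ a map $\Phi$ of the form $\mathrm{diag}(\pm r_1^{\,n}, \dots, \pm r_d^{\,n})$ followed by a translation (a product of coordinatewise similarities) with $\Phi(A) \subset A$ and $\tilde y \in \Phi(A)$; taking $n$ minimal so that $\Phi(A) \subset B(\tilde y, \epsilon)$, the diameter of $\Phi(A)$ is comparable to $\epsilon$. A direct computation with the formula for $F$ shows that, up to post-composition with a fixed invertible linear map of $\mathbb{R}^{d-1}$ and a translation, the linearisation of $F \circ \Phi$ (at the point of $A$ carried by $\Phi$ onto $\tilde y$) is the surjective linear map
\[
L_{\vec v} : \mathbb{R}^d \to \mathbb{R}^{d-1}, \qquad L_{\vec v}(p_1, \dots, p_d) = (p_2, \dots, p_d) - p_1 \vec v,
\]
for a vector $\vec v$, depending on $x$, $\tilde y$ and $n$, all of whose coordinates are non-zero; moreover $F \circ \Phi$ itself differs from this normalisation by a $C^1$-perturbation of size $O(\epsilon^2)$, small relative to the scale $\asymp \epsilon$ of the image. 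In particular $\Pi_x(A) \supset F(\Phi(A))$, which is the image of $A$ under a small $C^1$-perturbation of an affine copy of $L_{\vec v}$. It therefore suffices to prove the following.

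\emph{Step 2 (the main step): sheared products of self-similar sets have interior.} \emph{If $C_1, \dots, C_d \subset \mathbb{R}$ are self-similar with the OSC, with uniform ratios $r_1, \dots, r_d$ that are multiplicatively independent, if $\Haus C_1 + \cdots + \Haus C_d > d - 1$, and if $\vec v \in \mathbb{R}^{d-1}$ has all coordinates non-zero, then the sheared product $L_{\vec v}(C_1 \times \cdots \times C_d) = (C_2 \times \cdots \times C_d) - \{\, t \vec v : t \in C_1 \,\}$ is the support of an absolutely continuous measure whose density is continuous and positive somewhere; in particular it contains a non-empty open set.} Granting this, Step~1 finishes the argument: writing $\mu_A$ for the natural product measure on $A$ (so $\dim_{\mathrm{H}} \mu_A = \Haus A > d - 1$), the continuity and local positivity of the density of $(L_{\vec v})_* \mu_A$ are robust enough to survive the small $C^1$-perturbation and the affine normalisation of Step~1, so the pushforward of $\mu_A$ (restricted to the sub-piece $\Phi(A)$) under $F \circ \Phi$ charges every small ball inside some fixed open region; hence the compact set $F(\Phi(A)) \subset \Pi_x(A)$ contains a ball. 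Since such a $\tilde y$ exists for every $x$, this proves the conjecture. I would attack Step~2 through the machinery of $L^q$-dimensions and local entropy of self-similar and self-affine measures (in the spirit of the Hochman--Shmerkin projection theorems and of the work of Shmerkin and of Varj\'u on Bernoulli convolutions): the task is to show that $(L_{\vec v})_* \mu_A$ --- a single, completely explicit linear projection of the self-affine product measure $\mu_A$, of dimension $> d - 1$ --- is absolutely continuous with a continuous density, the multiplicative independence of $r_1, \dots, r_d$ being exactly what forbids the exact-overlap and dimension-drop phenomena that could otherwise spoil the estimate for exceptional $\vec v$.

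\emph{Where the difficulty lies.} Step~1 is routine. The obstruction is entirely in Step~2, and it is twofold. First, the known projection theorems yield full \emph{dimension} of the projected set, and at best --- via Marstrand's theorem --- positive Lebesgue measure for \emph{almost every} direction, whereas here we need \emph{non-empty interior}, that is, control of the density of the projected measure; this is a strictly harder, $L^q$- or Fourier-analytic, problem. Secondly, and more seriously, the conclusion is required simultaneously for \emph{every} base point $x$ --- equivalently, for an uncontrollable family of shear vectors $\vec v$ --- so no almost-everywhere statement is of any help, and one is forced to exploit the multiplicative independence of the contraction ratios to handle all directions at once. This is precisely a Furstenberg-type problem of the flavour mentioned in the remark following Conjecture~\ref{fu}, which is why Conjecture~\ref{RadialConj} is, for the moment, only a conjecture.
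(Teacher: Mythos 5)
The statement you are addressing, Conjecture~\ref{RadialConj}, is a \emph{conjecture} in the paper, and the paper offers no proof of it. Indeed, the paper says explicitly ``Currently, we are not able to prove Conjecture~\ref{RadialConj}'', uses it only as a hypothesis in Theorem~\ref{Number}, and remarks in Section~\ref{Projection} that ``almost nothing is known'' towards it. What you have written is therefore not a proof but a proof plan, and you say so yourself: your Step~2 --- that $(L_{\vec v})_*\mu_A$ is absolutely continuous with a continuous, somewhere-positive density for \emph{every} shear vector $\vec v$ with nonzero coordinates --- is precisely the open Furstenberg-type problem, and granting it amounts to granting the conjecture. So nothing has been proved here, and there is no proof in the paper to compare with.

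Beyond the (frankly acknowledged) openness of Step~2, Step~1 is also less routine than you claim. First, a single power $n$ in $\Phi=\mathrm{diag}(\pm r_1^n,\dots,\pm r_d^n)$ makes $\Phi(A)$ a very eccentric box, not a ball of radius $\asymp\epsilon$; you need coordinate-dependent powers $n_i$ with $r_i^{n_i}\asymp\epsilon$, and the resulting $\vec v$ then carries the uncontrolled factors $\pm r_{j+1}^{n_{j+1}}/r_1^{n_1}$. These stay bounded away from $0$ and $\infty$, but as $\epsilon\to 0$ they wander, which is exactly why Step~2 must be proved for all $\vec v$ in an open set and uniformly so --- this dependence is what makes the reduction no easier than the original. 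Second, the sentence asserting that ``continuity and local positivity of the density $\ldots$ are robust enough to survive the small $C^1$-perturbation'' is doing essential, unjustified work: passing from the absolute continuity of a fixed \emph{linear} projection to a uniform statement for $C^1$-nearby nonlinear maps requires a quantitative Fourier or $L^q$ estimate stable under $C^1$-perturbations, and no such input is available for general self-affine product measures. In short, Step~1 is a sensible localisation, but it does not reduce the conjecture to anything known, and the central claim remains open --- as the paper itself states.
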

	In Section \ref{Projection}, we will provide more details. Conjecture \ref{RadialConj} turns out to be closely related to Graham's question. We do not need the full strength of this conjecture. A special case will be enough. See Conjecture \ref{conj: radial missing digits}.
	\begin{thm}\label{Number}
		Assuming Conjecture \ref{RadialConj} and Schanuel's conjecture, there are infinitely many integers $n$ such that $\binom{2n}{n}$ is coprime with $3\times 5\times 7.$ 
	\end{thm}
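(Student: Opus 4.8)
The plan is to translate Graham's question into a statement about radial projections of a single self-similar set, and then invoke Conjecture~\ref{RadialConj} together with Schanuel's conjecture. \textbf{Step 1 (reduction via Kummer).} By Theorem~\ref{thm: Kummer}, $3\cdot5\cdot7\nmid\binom{2n}{n}$ holds precisely when $n\in N_{3,5,7}^{B_3,B_5,B_7}$ with $B_p=\{0,\dots,(p-1)/2\}$, so it suffices to prove this set is infinite. For $p\in\{3,5,7\}$ let $C_p\subseteq[0,1]$ be the self-similar set of reals all of whose base-$p$ digits lie in $B_p$; it satisfies the open set condition with the single contraction ratio $1/p$, and $\Haus C_p=\log|B_p|/\log p$. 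Put $K=C_3\times C_5\times C_7\subseteq\mathbb{R}^3$: a Cartesian product of self-similar sets whose one-per-factor contraction ratios $1/3,1/5,1/7$ are multiplicatively independent because $3,5,7$ are distinct primes, and
\[
\Haus K=\frac{\log 2}{\log 3}+\frac{\log 3}{\log 5}+\frac{\log 4}{\log 7}>2=d-1,
\]
an elementary inequality. Thus the hypotheses of Conjecture~\ref{RadialConj}, hence of the special case Conjecture~\ref{conj: radial missing digits} that in fact suffices, hold for $K$.

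\textbf{Step 2 (the arithmetic--geometric dictionary).} Since $0\in B_p$, and since the only base-$p$ expansion of a number with all digits $\le(p-1)/2$ is the terminating one, a non-negative integer $n$ has all its base-$p$ digits in $B_p$ if and only if $n/p^{L}\in C_p$ for every large $L$. Consequently $n\in N_{3,5,7}^{B_3,B_5,B_7}$ if and only if there are integers $L_3,L_5,L_7\ge0$ with
\[
\bigl(n\,3^{-L_3},\;n\,5^{-L_5},\;n\,7^{-L_7}\bigr)\in K,
\]
that is, the ray from the origin in the direction $v_{\vec L}:=(3^{-L_3},5^{-L_5},7^{-L_7})$ meets $K$ at the parameter value $t=n$. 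Hence $N_{3,5,7}^{B_3,B_5,B_7}$ is infinite as soon as infinitely many of the rays $\mathbb{R}_{>0}v_{\vec L}$ meet $K$ at an integer parameter; and the set of directions $\theta\in S^2$ for which $\mathbb{R}_{>0}\theta$ meets $K$ is, up to the antipodal map, exactly $\Pi_0(K)$.

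\textbf{Step 3 (feeding in the conjectures).} By Conjecture~\ref{RadialConj} — even just at $x=0$ — the set $\Pi_0(K)$ has non-empty interior; since $K\subseteq[0,1]^3$ this yields a non-empty open set $U$, contained in the open positive octant of $S^2$, of directions $\theta$ with $\mathbb{R}_{>0}\theta\cap K\ne\emptyset$. Under Schanuel's conjecture, $\log 3,\log 5,\log 7$ are algebraically independent over $\mathbb{Q}$; this excludes the exceptional $\mathbb{Q}$-linear relations among products of these logarithms that would otherwise obstruct the statement — elementary once such relations are ruled out — that the directions $v_{\vec L}/|v_{\vec L}|$, $\vec L\in\mathbb{Z}_{\ge0}^3$, are dense in the open positive octant of $S^2$. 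So infinitely many $v_{\vec L}$ point into $U$, and for each such $\vec L$ the ray $\mathbb{R}_{>0}v_{\vec L}$ meets $K$. The remaining, and genuine, work is to upgrade ``meets $K$'' to ``meets $K$ at an integer parameter $t=n$'': one renormalises using the self-similarity of $K$ and adjusts $\vec L$ inside $U$ (the admissible adjustments being governed by effective independence of $\log3,\log5,\log7$) so that the ray lands on a point of $K$ whose three coordinates have base-$3$, base-$5$ and base-$7$ expansions terminating at level $\vec L$; such a point is precisely $(n3^{-L_3},n5^{-L_5},n7^{-L_7})$ for an integer $n\in N_{3,5,7}^{B_3,B_5,B_7}$, and distinct adjustments produce infinitely many $n$.

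\textbf{The main obstacle} is exactly this last move in Step~3: passing from the soft geometric conclusion ``the ray meets $K$'' to the arithmetic conclusion ``the ray meets $K$ at an integer parameter.'' The points of $K$ with terminating coordinate expansions form an extremely sparse subset and cannot be caught by a naive compactness argument, so one must exploit the \emph{openness} of the cone $U$ supplied by the projection conjecture, the renormalisation built into self-similarity, and quantitative irrationality of $\log3,\log5,\log7$ simultaneously. Everything else — Kummer's theorem, the dimension inequality $\Haus K>2$, and the multiplicative-independence check — is routine.
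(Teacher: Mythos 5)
Your framework (Kummer $\to$ a Cartesian product $K$ of base-restricted self-similar sets $\to$ radial projection from $0$ plus Schanuel-driven density of directions) matches the paper's. But your Step~3 ends by declaring the ``main obstacle'' --- upgrading ``the ray $\mathbb{R}_{>0}v_{\vec L}$ meets $K$'' to ``meets $K$ at an integer parameter $t=n$'' --- and then merely sketching a speculative renormalisation to land on a point of $K$ with terminating coordinates. That step is not carried out, and as stated it does not follow from Conjecture~\ref{RadialConj}: the conjecture controls which directions hit $K$, not \emph{where along the ray} $K$ is hit, and the points of $K$ with simultaneously terminating base-$3$, base-$5$, base-$7$ coordinates form a measure-zero, nowhere-dense subset that a generic ray has no reason to meet.

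The gap is real but the fix is much simpler than you anticipate, and it is exactly the paper's device. If the ray in direction $v_{\vec L}=(3^{-L_3},5^{-L_5},7^{-L_7})$ meets $K$ at \emph{any} point $(y_1,y_2,y_3)=t\,v_{\vec L}$, then $t=y_1 3^{L_3}=y_2 5^{L_5}=y_3 7^{L_7}$. Multiplying $y_1\in C_3$ by an integer power of $3$ merely shifts its base-$3$ digit string, so $t$ has all its base-$3$ digits in $B_3$; likewise $t$ has all its base-$5$ digits in $B_5$ and base-$7$ digits in $B_7$. Since $0\in B_p$, the integer part $n=[t]$ inherits the same three digit restrictions, so $n\in N_{3,5,7}^{B_3,B_5,B_7}$. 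No integrality of the parameter, and no terminating expansions, are needed. To make $n$ unbounded (hence infinitely many values), one should keep the first coordinate bounded away from $0$: the paper does this by working with $A_p\subset[1,p]$ (or an affine sub-copy landing inside the cone supplied by the projection conjecture), which forces $t\ge 3^{L_3}\to\infty$. In your $[0,1]^3$ normalization you would need an analogous restriction of $K$ to a compact sub-cube away from the coordinate hyperplanes. Aside from these two points, your reduction via Kummer, the dimension count $\Haus K>2$, the multiplicative-independence check, and the use of Schanuel for density of directions are all aligned with the paper.
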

	\begin{rem}
		We do not need the full strength of Schanuel's Conjecture. It is enough only to assume that
		\[
		1,\frac{\log 3}{\log 5}, \frac{\log 3}{\log 7}
		\]
		are $\mathbb{Q}$-linearly independent.
	\end{rem}
	Currently, we are not able to prove Conjecture \ref{RadialConj}. Nonetheless, the strategy for proving Theorem \ref{Number} can be adapted to prove many other (unconditional) results concerning numbers with restricted digits. 
	
	First, we prove a quantitative version of a result in \cite{EGRS75}. 
	\begin{thm}\label{EGRSII}
		Let $p,q$ be two different odd primes. Consider the set $$A=\left\{n\in\mathbb{N}: \mathrm{gcd}\left(pq,\binom{2n}{n}\right)=1\right\}.$$ Then there is a constant $c>0$ depending on $p,q$ such that
		\[
		A\cap [1,N]\geq c\log N
		\]
		holds for all large enough integers $N$.
	\end{thm}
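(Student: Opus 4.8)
The plan is to pass, via Kummer's theorem, from $A$ to a product of two self-similar Cantor sets, and then to harvest integer solutions at essentially every scale by a thickness (Newhouse gap lemma) argument. By Theorem~\ref{thm: Kummer}, $\mathrm{gcd}\!\left(pq,\binom{2n}{n}\right)=1$ exactly when every base-$p$ digit of $n$ is $\le (p-1)/2$ and every base-$q$ digit is $\le (q-1)/2$; writing $S_b=\{n\in\mathbb{N}:D_b(n)\subseteq\{0,\dots,(b-1)/2\}\}$ this says $A=S_p\cap S_q=N_{p,q}^{B_p,B_q}$ with $B_p=\{0,\dots,(p-1)/2\}$. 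Attach to base $p$ the self-similar set $K_p=\{\sum_{i\ge 1}a_i p^{-i}:a_i\in B_p\}$; it satisfies the open set condition with ratio $1/p$ and $\#B_p=(p+1)/2$ maps, so $\Haus K_p=\log((p+1)/2)/\log p=:s_p$, and since $((p+1)/2)^2>p$ (i.e.\ $(p-1)^2>0$) we get $s_p>1/2$, hence $\Haus(K_p\times K_q)=s_p+s_q>1$. I also record that $K_p$ is \emph{homogeneous}: $\max K_p=\tfrac12$, the convex hull of every level-$m$ cylinder has length $\tfrac12 p^{-m}$, and the gap between consecutive level-$m$ cylinders also has length $\tfrac12 p^{-m}$; in particular the Newhouse thickness of $K_p$, and likewise of $K_q$, equals $1$.

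Here is the bridge to arithmetic. If $p^{m}\le n<p^{m+1}$ then $n\in S_p$ iff $n/p^{m+1}\in K_p$, and similarly in base $q$; so an element of $A$ of size $\asymp p^{m}\asymp q^{m'}$ is a non-zero point of $K_p\times K_q$ on the line $\{p^{m+1}x=q^{m'+1}y\}$ through the origin with first coordinate in $p^{-(m+1)}\mathbb{Z}$ --- i.e.\ a non-zero $n\in(S_p\cap[0,p^{m}))\cap(S_q\cap[0,q^{m'}))$. Rescaling by $p^{-m}$, the first set is a $p^{-m}$-net of $K_p$ and the second a $p^{-m}$-net of $\lambda K_q$, $\lambda=q^{m'}/p^{m}$. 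The crucial point is: a level-$m$ cylinder of $K_p$ and a level-$m'$ cylinder of $\lambda K_q$ both have convex hull of length $\tfrac12 p^{-m}$ and left endpoint in $p^{-m}\mathbb{Z}$, so if they overlap these endpoints must coincide; and the common value $n/p^{m}$ produces an integer $n<p^{m}$ whose base-$p$ digits (read off the $K_p$ cylinder) all lie in $B_p$ and whose base-$q$ digits (read off the $\lambda K_q$ cylinder) all lie in $B_q$ --- that is, $n\in A$.

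It remains to manufacture such overlapping cylinders. Given $m$, choose $m'$ to be the nearest integer to $m\log p/\log q$, so $\lambda=q^{m'}/p^{m}$ is of bounded size; then $K_p\subseteq[0,\tfrac12]$ and $\lambda K_q\subseteq[0,\tfrac{\lambda}{2}]$ are non-trivially interleaved, and one runs the Newhouse/Astels gap-lemma descent: at each step replace the larger of the two current linked cylinders by one of its children still linked to the other, interleaving the refinements so that the $K_p$ cylinder reaches level $m$ exactly when the $\lambda K_q$ cylinder reaches level $m'$. The resulting overlapping pair then yields, by the previous paragraph, an element $n\in A$ with $p^{m-1}\le n<p^{m}$ (the ``leading digit $\neq 0$'' condition is secured by the choice of the first cylinder). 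Carrying this out for $m$ in a set of positive density produces elements of $A$ lying in pairwise disjoint ranges $[p^{m-1},p^{m})$, whence $\#(A\cap[1,N])\ge c\log N$ for a constant $c=c(p,q)>0$ and all large $N$. (Proving instead that the radial projection $\Pi_{0}(K_p\times K_q)$ has non-empty interior --- Conjecture~\ref{RadialConj} for this product --- would upgrade this to a power of $N$; unconditionally the thickness argument gives only the logarithmic bound.)

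The main obstacle is that the thickness product $\tau(K_p)\tau(\lambda K_q)=\tau(K_p)\tau(K_q)=1$ lies exactly on the boundary of the Newhouse gap lemma, so the existence of a linked child at each step of the descent is not automatic; one must exploit the homogeneity of $K_p,K_q$ and the freedom --- via the density of $\{p^{a}q^{-b}\}$ and the multiplicative independence of $p\ne q$ --- to tune the scale ratio $\lambda$ so as to push the descent through this borderline case, while simultaneously keeping it off the trivial all-zero branch and ensuring that genuinely new integers appear at a positive density of scales. Coupling this borderline descent with the arithmetic side conditions (integrality of $n$ and the two incommensurable digit restrictions) is where the real work lies; the Kummer reduction, the dimension bookkeeping, and the final counting are routine.
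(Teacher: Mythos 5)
Your reduction via Kummer's theorem, the identification of $K_p,K_q$, the dimension/thickness bookkeeping, and the ``one integer per scale'' counting all match the paper's strategy, and the cylinder-endpoint observation (linked level-$m$ and level-$m'$ cylinders must share the same endpoint in $p^{-m}\mathbb{Z}$) is a correct and clever arithmetic bridge. However, you explicitly leave a gap at exactly the place the paper closes: you worry that the Newhouse descent ``is not automatic'' at the borderline $S(K_p)+S(K_q)=1$ and that making it compatible with integrality, nontriviality (staying off the all-zero branch), and positive density ``is where the real work lies.'' The paper sidesteps all of this. It does not re-run the descent; instead it invokes Theorem~\ref{NEWHOUSE2} (Astels' result, valid with $\sum S(A_i)\geq 1$, not just $>1$) to conclude at once that $A_p-A_q$ is an interval, where $A_p\subset[1,p]$, $A_q\subset[1,q]$ are the digit sets shifted away from $0$. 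Using $[1,p]$ rather than $[0,1/2]$ kills both the all-zero-branch issue and the leading-digit issue for free. Since $0\in A_p-A_q$ is interior and the hull/gap condition of Theorem~\ref{NEWHOUSE2} is stable under small linear perturbations, there is an open arc $O\subset S^1$ of slopes such that the line through the origin in any direction $v\in O$ meets $A_p\times A_q$; this is the step your descent was trying to accomplish by hand. The positive-density claim you assert but do not prove is then the elementary equidistribution of $k\log p/\log q\ \mathrm{mod}\ 1$ for $p\neq q$, which puts the direction $(1,q^{\{k\log p/\log q\}})$ into $O$ for a positive density of $k$.

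A second simplification: you force the common point to be an exact rational of denominator $p^m$ via the endpoint-coincidence argument, so you must track the descent carefully to land on linked cylinders. The paper never needs this. It takes \emph{any} real point $(x,y)\in l_k\cap(A_p\times A_q)$, rescales to $(x',y')=(p^kx,q^{[k\log p/\log q]}y)$ (which are equal real numbers by construction of $l_k$), and takes integer parts: $[x']=[y']$ automatically lies in $N_{p,q}^{B_p,B_q}\cap[p^k,p^{k+1})$ because rescaling and truncating a $b$-ary expansion cannot introduce new digits. This is strictly weaker information than what you try to extract, and it suffices. Finally, your parenthetical suggesting Conjecture~\ref{RadialConj} would ``upgrade'' the bound is off the mark: the paper's thickness argument already proves the radial projection $\Pi_{(0,0)}(A_p\times A_q)$ has nonempty interior, and still only yields $c\log N$ because each good scale contributes only one guaranteed integer; the logarithmic bound is intrinsic to the counting, not to the weakness of the thickness method.
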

	Next, we prove a result concerning linear forms of  numbers with restricted digits.
	\begin{thm}\label{EGRSIII}
		There are infinitely many integers triples $(x,y,z)\in N_{3}^{\{0,1\}}\times N_{4}^{\{0,1\}}\times N_{5}^{\{0,1\}}$ with 
		\[
		x+y=z.
		\]
	\end{thm}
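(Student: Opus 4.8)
The plan is to use the paper's fractal dictionary to recast the equation as a slicing problem for a product of Cantor sets, and then to do the arithmetic at infinitely many matched scales. Write $C_b=\{\sum_{i\ge 1}\epsilon_i b^{-i}:\epsilon_i\in\{0,1\}\}\subset[0,1]$ for the attractor of the IFS $\{t\mapsto t/b,\ t\mapsto (t+1)/b\}$; it obeys the open set condition with $\Haus C_b=\log 2/\log b$, and an integer $n<b^{m}$ lies in $N_b^{\{0,1\}}$ precisely when $b^{-m}n$ is one of the finite-expansion points of $C_b$. First I would note that $3,4,5$ are multiplicatively independent in the paper's sense (if $3^{P}2^{2Q}5^{R}=1$ with $P,Q,R\in\mathbb{Z}$ then $P=Q=R=0$), so the triples $(a\log 3,b\log 4,c\log 5)$ are well-distributed enough that for every large $T$ there are $a,b,c$ with $3^{a},4^{b},5^{c}$ all equal to $T^{1+o(1)}$. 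For such a matched triple the integer equation $x+y=z$ with $x<3^{a}$, $y<4^{b}$, $z<5^{c}$ is, after dividing by $5^{c}$, the statement that the plane $\{x+y=z\}$ — or a nearby plane $\{x+y=\lambda z\}$ with $\lambda=1+o(1)$ absorbing the scale mismatch — meets the level-$(a,b,c)$ approximation of the product self-similar set $C_3\times C_4\times C_5\subset[0,1]^{3}$ at a nontrivial finite-expansion point.

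The geometric content is that $\Haus(C_3\times C_4\times C_5)=\log 2\,(1/\log 3+1/\log 4+1/\log 5)>1$, so the orthogonal projection of this product onto the line through $(1,1,-1)$ — namely the set $C_3+C_4-C_5$ — has dimension $1$, and the slice $\{(p,q,r)\in C_3\times C_4\times C_5:p+q=r\}$ should carry positive dimension $\Haus(C_3\times C_4\times C_5)-1>0$; these are exactly the projection and slicing phenomena for products of self-similar sets with multiplicatively independent ratios discussed in Sections \ref{GMT} and \ref{Projection}. I would establish nontriviality of this slice either by appealing to the available unconditional projection and intersection theorems for such sets (the circle of ideas around Furstenberg's problem discussed after Conjecture \ref{fu}, see \cite{Wu},\cite{Sh}), or — to keep the argument self-contained as in Theorems \ref{EGRSII} and \ref{Number} — by an explicit construction seeded at the identity $9+16=25$, i.e.\ $100_{3}+100_{4}=100_{5}$ (all base-$3$, base-$4$, base-$5$ digits lie in $\{0,1\}$), and propagated by inserting mutually carry-compatible digit blocks into the three representations.

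The step I expect to be the main obstacle is the passage from a fractal statement to genuine integers. A slice point $(p,q,r)$ with $p+q=r$ is typically irrational, so one cannot simply rescale it; instead the argument must be run at each finite scale $5^{c}$, producing for every large $c$ an integer near-solution $x+y\approx z$ with $x\in N_3^{\{0,1\}}$, $y\in N_4^{\{0,1\}}$, $z\in N_5^{\{0,1\}}$, $z\asymp 5^{c}$, and then correcting it to an exact identity using the arithmetic compatibility of the bases $3,4,5$ around the seed. In the self-contained version the difficulty is the same in spirit: one must show the block-insertion procedure never stalls, which amounts to controlling carries in three incommensurable place-value systems simultaneously — the $3$-$4$-$5$ analogue of the digit combinatorics behind Theorems \ref{EGRSII} and \ref{Number}. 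Once a solution of size $\asymp 5^{c}$ is obtained for arbitrarily large $c$, letting $c\to\infty$ produces infinitely many distinct triples, which proves the theorem.
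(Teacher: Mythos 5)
Your proposal correctly identifies the geometric picture---$C_3\times C_4\times C_5$ is the right product Cantor set and $x+y=z$ is a slicing problem---but the mechanism you invoke does not close the argument, and the paper's proof uses a quite different tool that you haven't invoked.

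The central gap is that you go from $\Haus(C_3\times C_4\times C_5)>1$ to the nontriviality of the slice $\{p+q=r\}$. Dimension $>1$ gives Marstrand-type conclusions (the projection onto the line through $(1,1,-1)$ has positive Lebesgue measure for \emph{almost every} direction), but it says nothing about the specific direction you need, and the results of \cite{Wu},\cite{Sh} you cite are dimension or measure statements about linear projections; they do not produce the ``non-empty interior of the projection'' or ``the slice actually hits the set'' conclusion that the argument requires. The paper instead works with \emph{thickness}: it places $A_3,A_4,A_5$ in carefully chosen intervals (so that later the fractional parts are controllable), computes $S(A_3)=1/2$, $S(A_4)=1/3$, $S(A_5)=1/4$ via Lemma \ref{THICK}, notes that the sum is $13/12>1$, and then applies Astels' sum theorem (Theorem \ref{NEWHOUSE2}) / Corollary \ref{Slice} to conclude that an open set of planes actually intersects $A_3\times A_4\times A_5$. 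This thickness-based step is the essential ingredient; without it (or an equivalent replacement) the slice nontriviality has no proof.

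Two further points. First, your claim that the well-distribution you need follows from multiplicative independence of $3,4,5$ (i.e.\ $3^P4^Q5^R=1\Rightarrow P=Q=R=0$) is not enough: the rotation that actually appears is by $(\log 3/\log 4,\log 3/\log 5)$ on $\mathbb{T}^2$, and for that to be an irrational rotation one needs $1,\log 3/\log 4,\log 3/\log 5$ to be $\mathbb{Q}$-linearly independent---a homogeneous \emph{quadratic} condition in $\log 2,\log 3,\log 5$ that is open unconditionally (this is precisely why Schanuel enters the discussion). The paper spends the second half of the proof handling the degenerate possibility that this fails, via the lines $L_t$ and the explicit exclusion of the finitely many dangerous $(l_1,l_2)$. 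Second, your passage from a fractal slice point back to integers is only a gesture (``correcting it to an exact identity''); in the paper this step is done by a bound on the fractional parts, $\{x'\}\in(0,1/2]$, $\{y'\}\in(0,1/3]$, $\{z'\}\in(0,1/4]$, forcing the integer $\{z'\}-\{x'\}-\{y'\}$ to be $0$. That concrete mechanism (which relies on where the Cantor sets were placed in the first place) is also missing from your outline, and the seed $9+16=25$ with ``carry-compatible digit blocks'' is not developed into anything that would work.
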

	\begin{rem}
		This result says that there are infinitely many sums of powers of five that can be written as sums of powers of three and four. We list a few examples:
		\[
		5=4+1,
		\]
		\[
		5^2=4^2+3^2,
		\]
		\[
		5^3+5^2=3^4+4^3+4+1,
		\]
		\[
		5^4+5^2=3^5+4^4+3^4+4^3+4+1+1.
		\]
		It is perhaps possible that all large enough integers can be written as a sum of form $N^{\{0,1\}}_3+N^{\{0,1\}}_4.$ If so, this theorem would follow as a direct consequence. We believe that the conclusion does not hold for general triples $(b_1,b_2,b_3)$ in the place of $(3,4,5).$ For example, we suspect that there are only finitely many integer triples $(x,y,z)\in N_{9}^{\{0,1\}}\times N_{10}^{\{0,1\}}\times N_{11}^{\{0,1\}}$ with $x+y=z.$ A partial result towards this direction is \cite[Theorem 1.5]{Y20} which says that for each $\epsilon>0$ and large enough integer $N,$ the amount of integer triples in $[1,N]^3$ with the above property is $O(N^{\epsilon}).$
	\end{rem}
	Another very natural question to consider is whether there are infinitely numbers with missing digits in many different bases at the same time. For example, are there integers $b_1>\dots>b_{100}>2$ such that there are infinitely many integers whose base $b_1,\dots,b_{100}$ expansions do not have digit zero? Our next result answers this question.
	\begin{thm}\label{EGRS4}
		Let $k\geq 2$ be an integer. Then there is an integer $M\geq 1$ such that for all $k$-tuples of multiplicative independent integers $b_1,\dots,b_k$ that are at least $M$, there are infinitely many integers whose base $b_1,\dots,b_k$ expansions all omit the digit zero.
	\end{thm}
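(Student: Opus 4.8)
The plan is to translate the statement into a statement about radial projections of a Cartesian product of Cantor-type self-similar sets, exactly as in the strategy behind Theorem \ref{Number}, but now exploiting the freedom to take the bases $b_1,\dots,b_k$ large. For an integer $b\ge 2$, the set of reals in $[0,1]$ whose base-$b$ expansion omits the digit $0$ is (up to the usual boundary issues) a self-similar set $C_b\subset[0,1]$ with $b-1$ maps of contraction ratio $1/b$, so $\Haus C_b=\log(b-1)/\log b$, which tends to $1$ as $b\to\infty$. Hence the product $A_{b_1,\dots,b_k}=C_{b_1}\times\cdots\times C_{b_k}\subset\mathbb{R}^k$ has $\Haus A=\sum_{i=1}^k \log(b_i-1)/\log b_i$, and for every $\delta>0$ there is an $M$ such that $b_i\ge M$ for all $i$ forces $\Haus A>k-\delta$; in particular we can guarantee $\Haus A>k-1$. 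The condition that $b_1,\dots,b_k$ are multiplicatively independent gives precisely that the contraction ratios $1/b_1,\dots,1/b_k$ are multiplicatively independent, which is the hypothesis appearing in Conjecture \ref{RadialConj} / Conjecture \ref{conj: radial missing digits}. So the geometric input we want is: if $\Haus A>k-1$ then $\Pi_x(A)$ has nonempty interior for a suitable (integer or rational) center $x$.

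The key difference from Theorem \ref{Number} — and the reason this theorem is unconditional — is that here we are not forced to use a specific product coming from $(3,5,7)$; we may take the bases as large as we like, so $\Haus A$ can be pushed arbitrarily close to $k$. In that regime the required projection theorem is available unconditionally: once the Hausdorff dimension of a product of missing-digit Cantor sets exceeds $k-1$ by a definite margin, radial projection results (the transversality/Marstrand-type machinery developed in Sections \ref{GMT}--\ref{Projection}, combined with the multiplicative independence that rules out resonances between the scales $b_i$) yield that the radial projection from an explicit center has interior. Concretely, I would (i) fix a rational point $x$ off $A$ whose coordinates are chosen so that the lines through $x$ and points of a fixed finite-iterate approximation to $A$ realise all directions in an open subset of $S^{k-1}$; (ii) show, using the uniform contraction ratios and the open set condition, that the pieces of $A$ at scale $n$ project under $\Pi_x$ to arcs that cover $S^{k-1}$ with bounded overlap for $n$ large, because the dimension surplus $\Haus A-(k-1)$ forces the projected images to overlap and fill; (iii) conclude $\Pi_x(A)\supset$ an open set. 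Then one unwinds the geometry: a direction in $\Pi_x(A)$ corresponds to a point of $A$ on a line through $x$ of rational slope, and clearing denominators along that line produces infinitely many integer points $n$ whose base-$b_i$ digits lie in $\{1,\dots,b_i-1\}$ simultaneously — i.e. omit the digit $0$ in every base $b_i$.

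For the implementation I would follow the template already used for Theorems \ref{EGRSII} and \ref{EGRSIII}: reduce to producing one nondegenerate solution of an appropriate linear Diophantine relation among restricted-digit numbers, then bootstrap to infinitely many by a self-similar scaling argument (appending digit blocks, using that each $C_{b_i}$ contains scaled copies of itself). The role of "$b_i\ge M$" is purely to buy the dimension inequality $\sum_i \log(b_i-1)/\log b_i>k-1$ with room to spare, which is what makes the unconditional projection statement applicable; the role of multiplicative independence is to prevent the contraction ratios from sharing a common power, which would collapse the product onto a lower-dimensional self-similar set and destroy the projection argument.

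The main obstacle I expect is step (ii): establishing that the radial projection of the product Cantor set genuinely has interior, rather than merely positive measure or full dimension. Getting \emph{interior} (not just positivity) is what lets us extract actual integer solutions, and it is exactly the point where Conjecture \ref{RadialConj} is needed in general. The saving grace in this theorem is that we are in the high-dimension regime $\Haus A$ close to $k$, where the surplus over $k-1$ can be made as large as $1-\delta$; in that range the interior statement can be obtained unconditionally by a covering argument — each scale-$n$ cylinder of $A$ projects to an arc of angular width $\asymp b^{-n}$ up to bounded distortion away from $x$, there are $\asymp (\prod_i(b_i-1))^{n}$ of them, the total angular mass is $\asymp (\prod b_i)^{n(1-s/k)}\to\infty$ faster than any power, and multiplicative independence guarantees these arcs do not concentrate on a measure-zero set of directions, so for large $n$ they cover an open arc. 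Making this covering/non-concentration estimate fully rigorous — in particular the quantitative use of multiplicative independence via an effective bound on how close $\sum_i m_i\log b_i$ can be to $0$ — is the technical heart of the argument.
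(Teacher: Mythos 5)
Your proposal takes a genuinely different route from the paper, and unfortunately the route has a gap that the paper's own strategy is specifically designed to avoid.

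You cast the problem as a radial projection statement and then claim that, in the regime where $\Haus A$ is close to $k$, the interior of $\Pi_x(A)$ can be obtained ``unconditionally by a covering argument'' because the cylinders project to arcs whose total angular mass blows up and because multiplicative independence ``guarantees these arcs do not concentrate.'' This is precisely the step that cannot be done by a counting or covering argument. Large total angular mass does not preclude heavy overlap, and multiplicative independence of the $b_i$ does not by itself supply the non-concentration estimate you would need; establishing interior (as opposed to positive measure for a.e.\ center, which is Orponen's Theorem \ref{Radial}) for every fixed $x$ is exactly the content of Conjecture \ref{RadialConj}, which the paper explicitly flags as open with ``currently, almost nothing is known.'' Your dimension surplus $\Haus A > k-1+\text{(gap)}$ does not rescue this: Hausdorff dimension controls global covering counts but not the local disconnectedness of slices or projections, and in fact high Hausdorff dimension is compatible with all projections being totally disconnected without further transversality input. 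So as written, step (ii) of your plan asserts an unproved special case of the conjecture.

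The paper's actual proof avoids radial projections entirely. It works with the intersections $A_1\cap\cdots\cap A_k$ of the missing-zero sets $A_i\subset(0,\infty)$ and uses \emph{thickness}, not Hausdorff dimension, as the quantitative handle. The point of ``$b_i\ge M$'' is not to push $\Haus C_{b_i}=\log(b_i-1)/\log b_i$ near $1$ but to push the normalized thickness $S(C_{b_i})=(b_i-2)/(b_i-1)$ near $1$ (Lemma \ref{temp lma}), and then Theorem \ref{NEWHOUSEMULTI} (Hunt--Kan--Yorke) applied inductively produces a thick compact set inside $C_1\cap\cdots\cap C_k$ that is nonempty, stable under small affine perturbations (Lemma \ref{tmp lma 2}), and large in diameter. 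The infinitude of solutions comes from choosing integers $n$ with $nv\bmod\mathbb{Z}^{k-1}$ close to the origin, where $v=(\log b_1/\log b_2,\dots,\log b_1/\log b_k)$; since the orbit closure is a coset union containing $0$, this happens infinitely often for \emph{any} $v$, so the argument does not in fact rely on the rotation being irrational in the full torus. The thickness-based gap lemma buys exactly what the Hausdorff-dimension projection argument cannot: a genuine nonempty intersection (equivalently, interior of an oblique projection) rather than a measure-theoretic or almost-everywhere statement, and this is why the theorem is unconditional while Theorem \ref{Number} is not.
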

	\begin{rem}
		The missing digit zero is not a special choice. In fact, one can choose an integer $m\geq 1$ and consider digit expansions in different bases with an arbitrary choice of $m$ missing digits for each base. Of course, this only makes sense if the bases in consideration are all greater than $m.$
		
		This brings us closer to Graham's question. However, we are still far away. Kummer's theorem indicates that for considering prime divisors of binomial coefficients, one needs to consider numbers that miss at least half of the digits in many different prime bases.
	\end{rem}
	To prove the above results, we use Newhouse's gap lemma. See Section \ref{Thickness}. It is possible to prove Theorems \ref{EGRSIII}, \ref{EGRS4} without having the geometry of self-similar sets in mind. In fact, it is possible to prove those results by directly using arithmetic of integers and some well-known facts of irrational rotations just like the original arguments in \cite[Lemma on page 84]{EGRS75}. For example, \cite[Lemma on page 84]{EGRS75} can be viewed as an integer version of Newhouse's gap lemma applied to 'integer self-similar sets'. The point of using the fractal geometric point of view in this paper is to make the ideas behind the proofs more transparent. In fact, the core of the proof of Theorem \ref{EGRSII} essentially uses the same ideas as in \cite[Lemma on page 84]{EGRS75} but expresses them in a more geometric way.
	
	\section{Preliminaries}\label{GMT}
	\subsection{A remark for Schanuel's conjecture}
	We need Schanuel's conjecture because in some of the proofs, we will use properties of (irrational) rotations on torus. Let $n\geq 1$ be an integer. Let $\mathbb{T}^n=\mathbb{R}^n/\mathbb{Z}^n$. Let $\alpha=(\alpha_1,\dots,\alpha_n)\in\mathbb{R}^n.$ We define the map $T_\alpha:\mathbb{T}^n\to\mathbb{T}^n$ to be
	\[
	x\in\mathbb{T}^n\to T_\alpha(x)=x+\alpha.
	\]
	We say that $T_\alpha$ is the rotation on $\mathbb{T}^n$ with the rotation angle $\alpha.$ If $1,\alpha_1,\dots,\alpha_n$ are $\mathbb{Q}$-linearly independent, then we say that $T_\alpha$ is an irrational rotation. In this case, it is well-known that for each $x\in\mathbb{T}^n,$
	\[
	\overline{\{T^k_\alpha(x)\}_{k\geq 1}}=\mathbb{T}^n.
	\]
	See \cite[Corollary 4.15]{EW}.
	
	Let $b_1,b_2\dots,b_n$ be $n\geq 2$ integers. In order to study digit expansions with respect to these integers, it is often useful to consider the rotation on $\mathbb{T}^{n-1}$ with the rotation angle,
	\[
	\Lambda=(\log b_1/\log b_2,\dots,\log b_1/\log b_n).
	\]
	To do this, it is useful to know whether the above vector generates an irrational rotation. This is the case if
	\[
	\Lambda'=\left(\frac{\prod_{i=1}^{n} \log b_i}{\log b_1},\dots, \frac{\prod_{i=1}^{n} \log b_i}{\log b_n}\right)
	\]
	are $\mathbb{Q}$-linearly independent. If $n=2$, then the situation is simple. For $n\geq 3,$ the problem becomes challenging.  For example, it is not known whether $1,\log 2/\log 3$, $\log 2/\log 5$ are $\mathbb{Q}$-linearly independent. Problems of this kind are related to Schanuel's conjecture, see \cite{A71}.
	\begin{conj}[Schanuel]\label{schanuel}
		Let $n\geq 2$ be an integer. Let $x_1,\dots,x_n$ be  $\mathbb{Q}$-linearly independent complex numbers. Then the transcendence degree of $$\mathbb{Q}(x_1,\dots,x_n,e^{x_1},\dots,e^{x_n})$$ is at least $n$. 
	\end{conj}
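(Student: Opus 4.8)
The statement to be established is Schanuel's conjecture itself, one of the central open problems of transcendental number theory, so I cannot offer a genuine proof; what follows is a description of the landscape and of the only lines of attack that are even partially viable, together with the point at which every one of them stalls.

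The plan would begin by locating the known special cases and trying to widen them. If $x_1,\dots,x_n$ are algebraic, the Lindemann--Weierstrass theorem already gives exactly the asserted conclusion: $\mathbb{Q}$-linear independence of the $x_i$ forces algebraic independence of $e^{x_1},\dots,e^{x_n}$, hence transcendence degree at least $n$. At the opposite extreme, if instead $e^{x_1},\dots,e^{x_n}$ are algebraic — so the $x_i$ are logarithms of algebraic numbers — then Baker's theorem on linear forms in logarithms yields $\mathbb{Q}$-linear independence of $1,x_1,\dots,x_n$; this is strictly weaker than what Schanuel predicts but is the strongest unconditional statement available in that regime. There is also a genuine theorem in the function-field (derivation-theoretic) setting, Ax's theorem, often called ``Ax--Schanuel'', and the natural dream is to transport its proof to the arithmetic setting. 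So the honest plan is: first, isolate the minimal instance actually needed for this paper, namely $\mathbb{Q}$-linear independence of $1,\log 3/\log 5,\log 3/\log 7$ (equivalently of $\log 3\log 7$, $\log 5\log 7$, $\log 3\log 5$); second, attack that single instance directly using the available algebraic-independence machinery (Gelfond--Schneider, Baker, Nesterenko-type methods); third, only if that fails, fall back on the conditional formulation, which is what Theorem \ref{Number} does.

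The main obstacle is precisely the one that has kept the conjecture open for over half a century: there is no known technique that produces \emph{algebraic independence} — as opposed to transcendence or $\mathbb{Q}$-linear independence — among values of the exponential function at points that are not themselves algebraic. The first step of the plan is harmless and the third is exactly the paper's chosen route; the second step is where progress becomes impossible with present methods. Even the three-number instance $1,\log 3/\log 5,\log 3/\log 7$ is not known to be $\mathbb{Q}$-linearly independent, because the only unconditional inputs — Baker's theorem and its quantitative refinements — control $\mathbb{Q}$-linear relations among logarithms of algebraic numbers, not among \emph{ratios} of such logarithms, and no reduction of the latter to the former is known. Accordingly, the realistic contribution here is to record this dependence explicitly, as is done in the remark following Theorem \ref{Number}, and to observe that any advance on the relevant special case of Schanuel's conjecture would at once remove the hypothesis from that theorem.
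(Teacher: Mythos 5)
You have correctly recognized that the statement is Schanuel's conjecture itself, which the paper records as a \emph{conjecture} (not a theorem) and does not prove; there is therefore no proof in the paper to compare against, and your refusal to manufacture one is the right call. Your survey of the surrounding landscape — Lindemann--Weierstrass for algebraic $x_i$, Baker's theorem in the logarithms-of-algebraics regime, Ax's differential analogue, and the observation that even the specific instance $1,\log 3/\log 5,\log 3/\log 7$ needed for Theorem~\ref{Number} is open because Baker's method does not touch ratios of logarithms — is accurate and matches the role the conjecture plays in the paper.
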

	In case when $e^{x_1},\dots,e^{x_n}$ are integers, the conjectures reduces to saying that $x_1,\dots,x_n$ are algebraically independent over $\mathbb{Q}.$ This implies that $\Lambda'$ is indeed $\mathbb{Q}$-linearly independent if $1,\log b_1,\dots,\log b_n$ are $\mathbb{Q}$-linearly independent. This is the reason that whenever we are dealing with digit expansions with more than two bases, Schanuel's conjecture is likely to appear. We are not using the full power of Schanuel's conjecture here. However, this conjecture is so difficult that even the homogeneous quadratic case is not known. The only known result in this direction is Baker's theory on linear forms of logarithms, see also \cite{A71} and the references therein for more details.
	
	We should nonetheless remark that although we are dealing with digit expansions with more than two bases in this paper, the $\mathbb{Q}$-linear independence of $\Lambda'$ is not always involved. In fact, we will only need Schanuel's conjecture for proving Theorem \ref{Number}. For Theorem \ref{EGRSIII}, the proof would be much simpler by assuming Schanuel's conjecture. Additional efforts need to be taken to get rid of it. For Theorem \ref{EGRS4}, we simply do not meet the situation where Schanuel's conjecture is needed.

	\begin{comment}
	\subsection{Fourier transform of measures}
	We define the Fourier transform of $\mu\in\mathcal{P}(\mathbb{R}^n)$ to be
	\[
	\hat{\mu}(\omega)=\int e^{2\pi i (\omega,x)} d\mu(x).
	\]
	
	Let $\mu\in\mathcal{P}(\mathbb{R}^n)$ and $s\in [0,n].$ We define the $s$-energy integral of $\mu$ to be
	\[
	I_s(\mu)=\int\int \frac{1}{|x-y|^s}d\mu(x)d\mu(y).
	\] 
	We define the energy dimension of $\mu$ to be
	\[
	\dim_{e} \mu=\sup\{s>0: I_s(\mu)<\infty   \}.
	\]
	It can be checked that $\dim_{e} \mu\leq \Haus \mu$ in general.
	
	Then for each $s>0$, there is a constant $C_s>0$ such that
	\[
	I_s(\mu)=C_s \int |\hat{\mu}(\omega)|^2 |\omega|^{s-n}d\omega.
	\]
	Let $\Haus \mu=\alpha$. Then we see that 
	\[
	\alpha=essinf_{x-\mu}\liminf_{r\to 0} \frac{\log \mu(B(x,r))}{\log r}
	\]
	This implies that for each $\epsilon>0,$ the $\mu$ almost all $x$ we have
	\[
	\liminf_{r\to 0} \frac{\log \mu(B(x,r))}{\log r}\geq \alpha-\epsilon.
	\]
	Therefore we see that there is a number $c_x>0$ such that for all $r>0,$
	\[
	\mu(B(x,r))\leq c_x r^{\alpha-\epsilon}.
	\]
	For each integer $k\geq 1,$ we define
	\[
	E_k=\{x\in\mathbb{R}^n:\mu(B(x,r))\leq r^{\alpha-\epsilon} \forall r\in (0,k^{-1})  \}.
	\]
	Then $\mu(\cup_{k\geq 1} E_k)=1.$
	\end{comment}
	
	\subsection{Self-similar sets  and the open set condition}\label{SelfSimilar}
	Let $\mathcal{F}=\{f_i\}_{i\in\Lambda}$ be a finite collection of linear maps on $\mathbb{R}.$ We can write down each linear maps explicitly as $f_i(x)=r_ix+a_i.$ We assume that $r_i\in (0,1)$ for all $i\in\Lambda.$ We call such a collection of linear maps to be a \emph{linear IFS}. The parameters $r_i,i\in\Lambda$ are called \emph{contraction ratios} and $a_i,i\in\Lambda$ are called \emph{translations}. In case when all the contraction ratios are equal to $r\in (0,1)$, we call $r$ to be the \emph{uniform contraction ratio}. 
	
	By \cite{H81}, there is a unique non-empty compact set $F$ such that
	\[
	F=\bigcup_{i\in\Lambda} f_i(F).
	\]
	We call such a set $F$ to be a \emph{self-similar set} determined by $\mathcal{F}$. We say that $\mathcal{F}$ satisfies the \emph{open set condition} if there is a bounded open set $U\subset\mathbb{R}$ such that $f_i(U)\subset U$ for each $i\in\Lambda$ and
	\[
	f_i(U)\cap f_j(U)=\emptyset
	\]
	as long as $i\neq j.$ The open set condition is a condition on $\mathcal{F}$ rather than $F.$ However, we also say that $F$ satisfies the open set condition if there is an IFS $\mathcal{F}$ such that $\mathcal{F}$ determines $F$ and has the open set condition.
	
    Let $b>1$ be an integer and $B\subset \{0,\dots,b-1\}.$ Let $A_b^B$ be the set
	\begin{align*}
	\overline{\{x\in [0,1]\setminus\mathbb{Q}: \text{ the $b$-ary expansion of $x$ contains only digits in $B$} \}}\\
	={\{x\in [0,1]: \text{ some $b$-ary expansion of $x$ contains only digits in $B$} \}}.
	\end{align*}
	Then $A_b^B$ is self-similar with $\Lambda=B$, and for $z\in B,$ the linear map $f_z$ is as follows,
	\[
	x\in\mathbb{R}\to f_z(x)=\frac{1}{b}x+\frac{z}{b}.
	\]
	To verify that $\mathcal{F}=\{f_z\}_{z\in B}$ has the open set condition, we can choose $U=(0,1).$ A famous example of this kind is the middle third Cantor set $A^{\{0,2\}}_3$.
	\subsection{Thickness, intersection and sums of Cantor sets}\label{Thickness}
	Let $A\subset\mathbb{R}$ be a compact, totally disconnected set without isolated points. We shall call $A$ a Cantor set. First, we assume that $A\subset [0,1]$ and the convex hull of $A$ is $[0,1].$ In this case, we see that $\mathbb{R}\setminus A$ is a countable union of disjoint open intervals $\{I_i\}_{i\geq 1}$. Notice that $(-\infty,0), (1,\infty)$ are the only unbounded intervals  in $\{I_i\}_{i\geq 1}.$ We call bounded intervals in $\{I_i\}_{i\geq 1}$ to be \emph{bounded gaps} of $A.$ Thus, $A$ can be constructed by iteratively chopping out open intervals from $\mathbb{R}.$ Let $I=(a,b)$ be one of those bounded open intervals. We find the interval $I^{-}\in\{I_i\}_{i\geq 0}$ and $I^-\subset (-\infty, a)$ such that $|I^{-}|\geq |I|$ and there is no other such intervals between $I^{-}$ and $I.$ Similarly, we can find $I^+\subset (b,\infty)$ to the right of $I.$ Suppose that $I^-=(c,d)$ and $I^+=(e,f).$ We see that
	\[
	-\infty\leq c<d<a< b<e<f\leq \infty.
	\]
	Let $b_L=a-d$, $b_R=e-b$ and
	\[
	C(I)=\min\{b_L,b_R\}/|I|.
	\]
	We define $C(A)=\inf_{I\in\{I_i\}_{i\geq 1}, I \text{ bounded}} C(I).$ This number $C(A)$ is called \emph{the thickness} of $A.$ We also define \emph{the normalized thickness} of $A$ to be
	\[
	S(A)=\frac{C(A)}{C(A)+1}.
	\]
	For a general Cantor set $A$, we can perform a uniquely determined orientation preserving affine transformation $T$ which maps the convex hull of $A$ to the unit interval. Then we define 
	\[
	C(A)=C(T(A)), S(A)=S(T(A)).
	\]
	We have the following result due to Newhouse. See \cite{N79}.
	
	\begin{thm}\label{NEWHOUSE}[Newhouse's gap lemma]
		Let $A,B$ be two compact, totally disconnected sets. Suppose that $A$ is not contained in any of the gaps of $B$ and vice versa. If $S(A)+S(B)\geq 1,$ then $A\cap B\neq\emptyset.$
	\end{thm}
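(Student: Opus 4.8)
The plan is to reproduce Newhouse's argument by contradiction, assuming $A\cap B=\emptyset$. The first move is bookkeeping: a one-line computation shows that $S(A)+S(B)\geq 1$ is equivalent to $C(A)C(B)\geq 1$, so it suffices to argue with the unnormalised thicknesses. We may also assume both $A$ and $B$ are genuine Cantor sets (perfect), since if either is finite the hypothesis that it does not lie in a gap of the other forces a common point at once; in particular $C(A),C(B)>0$, and since a Cantor set has no isolated points, no bounded gap of $A$ or $B$ has an endpoint equal to an endpoint of the convex hull, so all the ``bridges'' appearing below are nondegenerate. The key notion is a \emph{linked pair}: a bounded gap $U$ of $A$ and a bounded gap $V$ of $B$ such that each of the open intervals contains exactly one endpoint of the other, i.e.\ (after reflecting if necessary) $U=(a_1,a_2)$ and $V=(b_1,b_2)$ with $a_1<b_1<a_2<b_2$.

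Step 1 is to produce an initial linked pair. Since neither set lies in a gap of the other, the convex hulls of $A$ and $B$ overlap; a point of the overlap lies outside $A$ or outside $B$, so after swapping $A$ and $B$ if necessary we may take a point of the overlap missing from $A$, which then lies in a bounded gap $U_0$ of $A$. A short case analysis on the endpoints of $U_0$ --- using the non-containment hypothesis to exclude $B\subset U_0$, and noting that if an endpoint of $U_0$ lies in $B$ we already have $A\cap B\neq\emptyset$ --- places an endpoint of $U_0$ strictly inside a bounded gap $V_0$ of $B$ in such a way that $(U_0,V_0)$ is linked.

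Step 2 is the heart: a refinement step. Given a linked pair $(U,V)$ with $a_1<b_1<a_2<b_2$, let $P=[a_2,e_A]$ be the bridge of $A$ at the endpoint $a_2$ of $U$, i.e.\ the maximal closed interval to the right of $U$ whose interior contains no gap of $A$ of length $\geq|U|$; by definition of $C(A)$ one has $|P|\geq C(A)|U|$. Symmetrically let $Q=[d_B,b_1]$ be the bridge of $B$ at $b_1$, with $|Q|\geq C(B)|V|$. If we had both $P\subset V$ and $Q\subset U$, then $|P|<b_2-a_2<|V|$ and $|Q|<b_1-a_1<|U|$, giving $C(A)C(B)\,|U|\,|V|<|U|\,|V|$ and hence $C(A)C(B)<1$, contradicting the hypothesis. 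Therefore (up to reflection) $e_A\geq b_2$. If $e_A=b_2$ then $e_A\in A\cap B$ and we are done; otherwise $b_2$, a point of $B$, lies in the interior of $P$, hence in a gap $U'$ of $A$ with $|U'|<|U|$, and one checks, using that $A$ has no isolated points to rule out gaps abutting $a_2$ and $e_A$, that the left endpoint of $U'$ lies inside $V$ while its right endpoint lies to the right of $b_2$, so that $(U',V)$ is again a linked pair, now with strictly smaller $A$-gap.

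Iterating Step 2 --- possible as long as no common point has appeared --- yields an infinite sequence of linked pairs $(U_n,V_n)$ in which at each stage one of $|U_n|,|V_n|$ strictly decreases. Since the bounded gap lengths of a Cantor set can accumulate only at $0$, at least one of these sequences tends to $0$, so $\min(|U_n|,|V_n|)\to 0$. Taking the endpoint $x_n\in A$ of $U_n$ lying in $V_n$ and the endpoint $y_n\in B$ of $V_n$ lying in $U_n$, we get $|x_n-y_n|<\min(|U_n|,|V_n|)\to 0$; passing to a convergent subsequence of $(x_n)$ and using that $A$ and $B$ are closed produces a point of $A\cap B$, contradicting $A\cap B=\emptyset$. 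The main obstacle is Step 2: both the endpoint-chasing that certifies $(U',V)$ is linked and the clean extraction of the inequality $C(A)C(B)<1$ from the ``both bridges trapped'' case --- the one place where the thickness hypothesis is actually consumed. Step 1 is routine but fiddly, and the final convergence is immediate once the accumulation property of gap lengths is noted.
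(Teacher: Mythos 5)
The paper does not prove this statement; it records it and cites Newhouse \cite{N79} for the case $S(A)+S(B)>1$ and Astels \cite[Theorem 2.2]{A99} for the borderline $S(A)+S(B)=1$, so there is no in-paper argument to compare yours against. Your reconstruction is the standard linked-gap proof and it is correct. A few comments. Your reduction to perfect sets is justified but worth spelling out: if $A$ has at least two points and an isolated point, one of the gaps meeting that point has a degenerate bridge, forcing $C(A)=0$ and hence $S(A)=0$, so $S(A)+S(B)<1$ since $S(B)<1$ whenever $B$ has a bounded gap. In Step 2 the strictness of your inequalities is exactly where the $=1$ case is won: $P\subset V$ forces $e_A<b_2$ strictly (closed bridge, open gap), so $C(A)|U|\le |P|<|V|$ and symmetrically $C(B)|V|<|U|$, whence $C(A)C(B)<1$ strictly; this is precisely what upgrades Newhouse's $>1$ to Astels's $\geq 1$. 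The appeal to absence of isolated points when certifying that $(U',V)$ is linked is actually unnecessary --- $a_1'\geq a_2>b_1$ and $a_1'<b_2<a_2'$ already follow from $b_2\in U'\subset (a_2,e_A)$, whether or not gaps abut --- but it is harmless. Finally, the convergence at the end rests on the fact that a bounded set has only finitely many gaps of length $\geq\epsilon$ for each $\epsilon>0$, so a non-increasing sequence of gap lengths of $A$ (or $B$) with infinitely many strict decreases must tend to zero; you state the conclusion but should make this finiteness reason explicit.
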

	Newhouse proved the above result with the condition $S(A)+S(B)>1.$ Astels \cite[Theorem 2.2]{A99} showed that the above result holds for $S(A)+S(B)=1$ as well.
	
	In the case when we have two very thick Cantor sets, we expect that their intersection could also be very thick.  In this direction, we have \cite[Theorem 1, and the discussion at the beginning of page 882, and the discussion before Corollary 6]{HKY}. 
	\begin{thm}\label{NEWHOUSEMULTI}
		Let $A,B$ be two compact, totally disconnected sets. Suppose that $A$ is not contained in any of the gaps of $B$ and vice versa. For each $\delta>0,$ there is an $\epsilon>0$ such that if $S(A), S(B)$ are greater than $1-\epsilon,$ then there is a compact set $C\subset A\cap B$ such that $S(C)$ is greater than $1-\delta.$ Moreover, suppose that the convex hulls $Conv(A),Conv(B)$ are such that $Conv(A)\cap Conv(B)$ contains neither $A$ nor $B$. Then by making $\epsilon$ smaller if necessary, $C$ can be chosen so that $|Conv(C)|$ is at least $(1-\delta)|Conv(A)\cap Conv(B)|.$
	\end{thm}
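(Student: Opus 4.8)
The idea is to upgrade the proof of Newhouse's gap lemma from a ``two points'' ping-pong to a ``whole Cantor set'' ping-pong, refining the gap structures of $A$ and of $B$ simultaneously.

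\textbf{Reduction to thicknesses.} Since $t\mapsto t/(t+1)$ is an increasing homeomorphism of $[0,\infty]$ onto $[0,1]$, the hypothesis $S(A),S(B)>1-\epsilon$ is the same as $C(A),C(B)>\tfrac1\epsilon-1$, and the conclusion $S(C)>1-\delta$ is the same as $C(C)>\tfrac1\delta-1$. So it suffices to prove: for every $T>0$ there is $\tau_0=\tau_0(T)$ such that whenever $A,B$ are linked (neither contained in a gap of the other) with $C(A),C(B)\ge\tau_0$, the set $A\cap B$ contains a Cantor set $C$ with $C(C)\ge T$; and, under the extra crossing hypothesis, $C$ may in addition be taken with $|\mathrm{Conv}(C)|\ge(1-\delta)\,|\mathrm{Conv}(A)\cap\mathrm{Conv}(B)|$ once $\tau_0$ is large enough.

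\textbf{The interleaved construction.} Recall the standard ``derivation'' of a Cantor set $K$ with $C(K)\ge\tau$: one writes $K=\bigcap_m K_m$, where $K_0=\{\mathrm{Conv}(K)\}$ and $K_{m+1}$ is obtained from $K_m$ by choosing one interval (``bridge'') $J$, deleting its \emph{largest} gap $u\subset J$, and replacing $J$ by the two closed pieces $J^-,J^+$; by the definition of thickness $|J^\pm|\ge\tau|u|$, and these pieces are again the convex hulls of the corresponding pieces of $K$. I would build $C$ by a single process that alternates between the derivations of $A$ and of $B$. Maintain a finite family $\mathcal C_n$ of closed intervals, each a ``common bridge'': an interval $I$ on which $A\cap I$ and $B\cap I$ are Cantor sets, linked inside $I$ (so Theorem \ref{NEWHOUSE} applies inside $I$), each of thickness $\ge\tau_n$ and each with convex hull filling $I$ up to a factor $\eta_n$ at both ends. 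Initialize $\mathcal C_0$ by running the gap-lemma ping-pong inside $\mathrm{Conv}(A)\cap\mathrm{Conv}(B)$ to cut it into finitely many common bridges. To pass from $\mathcal C_n$ to $\mathcal C_{n+1}$, for each $I\in\mathcal C_n$ compare the largest gap $u_A$ of $A\cap I$ with the largest gap $u_B$ of $B\cap I$, delete the larger of the two from $I$, say $u_A$, and obtain $I^-,I^+$. Since the $A$-bridges flanking $u_A$ already have length $\ge\tau_n|u_A|$, and since $|u_A|\ge|u_B|\ge$ every gap of $B\cap I$ (so $B\cap I$ cannot jump across $u_A$ and must meet both $I^\pm$ in a still-linked way), each $I^\pm$ is again a common bridge, with parameters $\tau_{n+1},\eta_{n+1}$ only slightly degraded. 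Then $C:=\bigcap_n\bigl(\bigcup\mathcal C_n\bigr)$ is a Cantor set contained in $A\cap B$.

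\textbf{The thickness estimate, which is the crux.} One must show the bridges never get short relative to gaps deleted later. The key structural fact is that the gap removed from a common bridge $I$ is the largest gap present in $I$ coming from \emph{either} set, hence is at least as large as every gap deleted at any later stage from a descendant of $I$, while it is already flanked inside $A$ (or $B$) by bridges of length $\ge\tau_n$ times it. Propagating this through the interleaving --- where a bridge $I^\pm$ produced by cutting an $A$-gap may next be cut by a $B$-gap --- yields a bound $C(C)\ge\phi(\tau_0)$ with $\phi(\tau)\to\infty$ as $\tau\to\infty$; a crude rate such as $\phi(\tau)\asymp\sqrt{\tau}$, reflecting a bounded multiplicative loss per alternation, is more than enough, and choosing $\tau_0$ with $\phi(\tau_0)\ge T$ finishes the main assertion. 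I expect this bookkeeping to be the real obstacle: keeping the $\eta_n$-filling property alive for \emph{both} sets after cutting a gap of only one of them, and tracking all bridge-to-gap ratios under two interleaved derivations, is exactly where the clean recursion behind Newhouse's lemma has to be made uniform and quantitative, and a local, uniform version of the gap lemma inside each $I$ is what is needed to keep the pieces linked.

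\textbf{The convex-hull addendum.} Here the crossing hypothesis is essential: up to relabelling it forces the left endpoint $c$ of $\mathrm{Conv}(A)\cap\mathrm{Conv}(B)$ to be $\min B\in B$ and the right endpoint $d$ to be $\max A\in A$. If $c$ lies in a gap $g_c$ of $A$ (otherwise $c\in A\cap B$ already), then applying the thickness of $A$ at $g_c$ on the side pointing toward $d$ shows that the part of $[c,d]$ lying to the left of $A$ has length $<|g_c|\le|\mathrm{Conv}(A)\cap\mathrm{Conv}(B)|/\tau_0$; symmetrically for $B$ at the right end. Thus $A\cap B$ already lives in a subinterval of $\mathrm{Conv}(A)\cap\mathrm{Conv}(B)$ of relative length $\ge 1-2/\tau_0$. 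Starting the construction above from an initial common bridge whose endpoints are within an $O(1/\tau_0)$-fraction of the two ends of that subinterval (such a bridge again exists by the gap lemma and the linking), and noting that in the interleaved derivation the leftmost (resp.\ rightmost) bridge always retains its left (resp.\ right) endpoint, $C$ reaches within a further $O(1/\tau_0)$-fraction of each end. Shrinking $\epsilon$, i.e.\ enlarging $\tau_0$, makes the total loss smaller than $\delta\,|\mathrm{Conv}(A)\cap\mathrm{Conv}(B)|$, which is the stated bound.
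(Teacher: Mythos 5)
The paper does not prove Theorem \ref{NEWHOUSEMULTI}; it cites it to Hunt, Kan and Yorke \cite{HKY} (their Theorem~1 together with the discussion around it), so there is no internal proof to compare your argument against. Your high-level scheme --- an interleaved gap-cutting derivation that alternates between $A$ and $B$ while maintaining a family of ``common bridges'' --- is in the same spirit as the Hunt--Kan--Yorke argument, so the strategy itself is sound in outline.

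The proposal nevertheless has a genuine gap, which you yourself flag but do not resolve: the inductive step is never actually closed. When you delete the gap $u_A$ of $A\cap I$ (the larger of the two candidate gaps), the pieces $A\cap I^{\pm}$ inherit $A$'s thickness harmlessly, because the endpoints of $u_A$ are points of $A$. But $u_A$ is \emph{not} a gap of $B$; $B$ may have points inside $u_A$, and a gap of $B$ may straddle an endpoint of $u_A$. Hence $\mathrm{Conv}(B\cap I^{-})$ can recede from the cut end by roughly $|u_B|$, eroding the $\eta_n$-filling on the $B$-side and potentially breaking the linking of $A\cap I^{\pm}$ and $B\cap I^{\pm}$ in a later step. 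Your remark that $B\cap I$ ``cannot jump across $u_A$'' controls gap sizes but not whether $B\cap I$ actually has points on both sides of $u_A$; that requires a quantitative relation between $\tau_n$, $\eta_n$ and $|u_A|/|I|$ which is not established. You acknowledge this is ``exactly where the clean recursion behind Newhouse's lemma has to be made uniform and quantitative,'' yet the proposal then asserts a rate $\phi(\tau)\asymp\sqrt{\tau}$ without derivation, and never specifies how $\tau_n$ and $\eta_n$ co-evolve so that the infinite product of per-step losses stays bounded below. Until that recursion is written down and shown to close with a constant depending only on $\tau_0$, the main thickness claim is incomplete. (By contrast, the convex-hull addendum is essentially correct: the crossing hypothesis does force, up to relabelling, $c=\min B$ and $d=\max A$, and the thickness at the terminal gaps gives an $O(1/\tau_0)$ loss at each end.)
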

	Recently, there are some further development on the intersecting structures of thick Cantor sets. See \cite{Ya}.
	
	The following result was proved in \cite[Theorem 2.4]{A99}.
	
	\begin{thm}\label{NEWHOUSE2}
		Let $A_1,\dots,A_k$ be $k\geq 2$ Cantor sets. Suppose that their convex hulls are $I_1,I_2,\dots,I_k$ and the size of their largest bounded gaps are $g_1,\dots,g_k$ respectively. Suppose further that $\sum_{i=1}^k S(A_i)\geq 1$ and $\min\{|I_1|,\dots,|I_k|\}> \max\{g_1,\dots,g_k\}.$ Then $A_1+\dots+A_k$ is an interval.
	\end{thm}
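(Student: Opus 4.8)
The plan is to reduce everything to Newhouse's gap lemma (Theorem~\ref{NEWHOUSE}) through the elementary identity: $t\in A_1+\dots+A_k$ if and only if $A_1\cap\big(t-(A_2+\dots+A_k)\big)\neq\emptyset$, together with an analysis of how the normalized thickness, the convex hull, and the largest bounded gap behave under Minkowski sums. Since $A_1+\dots+A_k$ is contained in the closed interval $I_1+\dots+I_k$ and contains its two endpoints $\sum_i\min I_i$ and $\sum_i\max I_i$ (the extreme points of each $A_i$ belong to $A_i$), it suffices to prove $A_1+\dots+A_k=I_1+\dots+I_k$.

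I would first isolate two lemmas. \emph{Lemma A:} if $A$ is a Cantor set whose largest bounded gap has size $g$ and $J$ is a closed interval with $|J|\geq g$, then $A+J=\mathrm{Conv}(A)+J$; indeed the translates $a+J$ ($a\in A$), each of length at least $g$, bridge every gap of $A$. \emph{Lemma B:} for Cantor sets $A,B$ one has $\mathrm{Conv}(A+B)=\mathrm{Conv}(A)+\mathrm{Conv}(B)$, and every bounded gap of $A+B$ has size at most $\min\{g_A,g_B\}$ (if $(u,v)$ is such a gap, write $v=a+b$ with $b$ lying just to the right of a gap $(b-\delta,b)$ of $B$, where $\delta\leq g_B$; then $a+(b-\delta)\in A+B$ forces $v-u\leq\delta$, and symmetrically in $A$); and, provided $\min\{|\mathrm{Conv}(A)|,|\mathrm{Conv}(B)|\}>\max\{g_A,g_B\}$, either $S(A)+S(B)\geq1$ and $A+B$ is an interval, or $S(A)+S(B)<1$ and $A+B$ is a Cantor set with $S(A+B)\geq S(A)+S(B)$. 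The interval case of Lemma B is precisely the two-summand instance of the theorem and follows from Theorem~\ref{NEWHOUSE}: reflection preserves normalized thickness, the bounded gaps of $t-B$ are too short to contain $\mathrm{Conv}(A)$ and conversely, and as soon as $\mathrm{Conv}(A)$ meets $t-\mathrm{Conv}(B)$ neither set lies in an unbounded complementary interval of the other; hence $t\in A+B$ for every $t\in\mathrm{Conv}(A)+\mathrm{Conv}(B)$, including the boundary values.

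Granting these lemmas, the theorem follows by induction on the number of summands. The hypothesis $\min_i|I_i|>\max_j g_j$ is inherited by every sub-collection, and, by Lemma B, the convex hull of a partial sum is the sum of the corresponding $I_i$ while its largest bounded gap never exceeds $\max_j g_j$; so iterating Lemma B shows that $B:=A_2+\dots+A_k$ is either an interval with $\mathrm{Conv}(B)=I_2+\dots+I_k$, or a Cantor set with that convex hull, largest bounded gap at most $\max_j g_j$, and $S(B)\geq S(A_2)+\dots+S(A_k)$. In the first case $|\mathrm{Conv}(B)|\geq|I_2|>g_1$, so Lemma A gives $A_1+B=I_1+\mathrm{Conv}(B)=I_1+\dots+I_k$. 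In the second case $S(A_1)+S(B)\geq\sum_{i=1}^k S(A_i)\geq1$; the bounded gaps of $A_1$ have size at most $g_1<|I_2|\leq|\mathrm{Conv}(B)|$ and those of $B$ at most $\max_j g_j<|I_1|$, so once the two hulls overlap neither set lies in a gap of the other, and Theorem~\ref{NEWHOUSE} yields $t\in A_1+B$ for every $t\in I_1+\mathrm{Conv}(B)$. In both cases $A_1+\dots+A_k=I_1+\dots+I_k$.

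The one genuinely substantial step is the superadditivity of the normalized thickness under Minkowski sums — that $A+B$ is a Cantor set with $S(A+B)\geq S(A)+S(B)$ whenever it is not already an interval; everything else is bookkeeping with convex hulls and gap lengths plus a single invocation of the gap lemma. This inequality is the reason the normalization $S=C/(C+1)$ is chosen: establishing it requires, for each gap of $A+B$, locating the nearest comparably large gaps on both sides and controlling the resulting ratios in terms of $C(A)$ and $C(B)$, which is the technical heart of the argument (and is essentially the content of \cite{A99} underlying the statement being proved). In a write-up one would either reprove this estimate or, as the present paper does, simply quote it.
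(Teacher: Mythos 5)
The paper does not prove Theorem~\ref{NEWHOUSE2} at all: it simply cites Astels, \cite[Theorem~2.4]{A99}, and later also recasts the statement geometrically in the discussion preceding Corollary~\ref{Slice}. So there is no in-paper proof to compare your argument against; what you have written is a reconstruction of the argument underlying Astels' theorem. That reconstruction has the right shape. The reduction of ``$A_1+\dots+A_k$ is an interval'' to ``$A_1+\dots+A_k=I_1+\dots+I_k$'', the observation that the two-summand interval case is equivalent to Newhouse's gap lemma via the translation $t\in A+B\iff A\cap (t-B)\neq\emptyset$, and the induction driven by bookkeeping of convex hulls, largest bounded gaps, and normalized thickness, are all in the spirit of Astels' proof. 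You are also right that the genuinely nontrivial ingredient is the superadditivity $S(A+B)\geq S(A)+S(B)$ in the non-interval regime, and you are explicit that you would either reprove it or cite it, which is honest and appropriate: in the paper the whole theorem is quoted, so quoting the superadditivity estimate is entirely consistent.

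One small inaccuracy in your Lemma~B that deserves flagging: the claim that every bounded gap of $A+B$ has size at most $\min\{g_A,g_B\}$ is not justified by the argument you sketch and need not hold. Given a bounded gap $(u,v)$ of $A+B$ and any representation $v=a+b$, the interval $(u-b,a)$ is disjoint from $A$ and $(u-a,b)$ is disjoint from $B$; since $(u,v)$ is bounded one cannot simultaneously have $a=\min A$ and $b=\min B$, so at least one of these sits inside a \emph{bounded} gap of the respective set, yielding $v-u\leq\max\{g_A,g_B\}$. Obtaining the stronger $\min$ bound would require producing a representation with $a>\min A$ and (possibly a different one) with $b>\min B$, which the existence of some representation $v=a+b$ does not guarantee. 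Fortunately your induction only ever invokes the bound by $\max_j g_j$, which is the one that is actually available, so the overall scheme is unaffected; you should just state Lemma~B with $\max$ rather than $\min$. Beyond that, the checks you make when invoking Newhouse's gap lemma (that once the hulls of $A_1$ and $t-B$ overlap neither set fits in a bounded gap of the other, because bounded gaps of each are shorter than the other's hull, and that boundary values of $t$ are handled since Cantor sets contain the endpoints of their hulls) are correct and complete the reduction.
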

	
	For later use, we reformulate the above result in terms of intersections. Let $A_1,A_2,A_3$ be three Cantor sets satisfying the hypothesis of the above theorem. Consider the Cartesian product $A=A_1\times A_2\times A_3.$ Let $v\in S^{2}$ be a direction vector of $\mathbb{R}^3$ and let $H_v$ be the plane passing through the origin and normal to $v.$ The family of planes parallel to $H_v$ can be parametrized by $\mathbb{R},$ more precisely, $\{H_v(a)=H_v+av\}_{a\in\mathbb{R}}.$ Denote $P_v:\mathbb{R}^3\to \mathbb{R}v$ to be the corresponding orthogonal projection in direction $v$. The above theorem says that $P_{(1/\sqrt{3},1/\sqrt{3},1/\sqrt{3})}(A)$ is an interval. Moreover, notice that the normalized thickness is invariant under affine maps. In addition, there is a non-empty open neighbourhood $O$ of $(1/\sqrt{3},1/\sqrt{3},1/\sqrt{3})$ in $S^2$ such that whenever $v=(v_1,v_2,v_3)\in O,$ the size of the minimum convex hull of $v_1A_1, v_2A_2, v_3A_3$ is larger than the size of their maximum gap. This implies that $P_v(A)$ is an interval. 
	
	Now if $\Pi_v(A)$ is an interval, then we see that 
	\[
	\{a\in\mathbb{R}: H_v(a)\cap A\neq\emptyset\}
	\]
	is an interval. More precisely, if $H_v(a)\cap I_1\times I_2\times I_3\neq\emptyset$ then we have $H_v(a)\cap A\neq\emptyset.$ From here, we have the following corollary.
	
	\begin{cor}\label{Slice}
		Let $A_1,A_2,A_3$ be $3$ Cantor sets with $S(A_1)+S(A_2)+S(A_3)\geq 1.$ Suppose that their convex hulls are $I_1,I_2,I_3$ and the size of their largest gaps are $g_1,g_2,g_3$ respectively. If \[\min\{|I_1|,|I_2|,|I_3|\}>\max\{g_1,g_2,g_3\}\] then there is a non-empty open set $O\subset S^2$ such that whenever $v\in O,$ we have
		\[
		H_v(a)\cap I_1\times I_2\times I_3\neq\emptyset\implies H_v(a)\cap A_1\times A_2\times A_3\neq\emptyset.
		\]
	\end{cor}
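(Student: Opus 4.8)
The plan is to deduce this directly from Theorem \ref{NEWHOUSE2}, using only the elementary observation that a hyperplane slice of a Cartesian product in a fixed direction is governed by a weighted sum of the factors. First I would record the identity underlying everything: for $v=(v_1,v_2,v_3)\in S^2$ and $A=A_1\times A_2\times A_3$, a point $y$ lies on $H_v(a)=H_v+av$ if and only if $\langle y,v\rangle=a$, so that
\[
\{a\in\mathbb{R}: H_v(a)\cap A\neq\emptyset\}=v_1A_1+v_2A_2+v_3A_3,
\]
and the same identity holds with each $A_i$ replaced by its convex hull $I_i$. Hence the implication to be proved, for a given $v$, is exactly the set inclusion $v_1I_1+v_2I_2+v_3I_3\subseteq v_1A_1+v_2A_2+v_3A_3$.

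Next I would check that the hypotheses of Theorem \ref{NEWHOUSE2} survive the rescaling $A_i\mapsto v_iA_i$ for $v$ near the diagonal direction. For $v_i>0$ the map $x\mapsto v_ix$ is an orientation-preserving affine bijection, so $v_iA_i$ is again a Cantor set with $S(v_iA_i)=S(A_i)$, with convex hull $v_iI_i$ (of length $v_i|I_i|$) and with largest bounded gap of length $v_ig_i$. Thus $\sum_i S(v_iA_i)=\sum_i S(A_i)\geq 1$ for every such $v$, and at $v=(1/\sqrt3,1/\sqrt3,1/\sqrt3)$ the separation condition $\min_i v_i|I_i|>\max_i v_ig_i$ reduces to $\min_i|I_i|>\max_i g_i$, which holds by hypothesis. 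Since $v\mapsto v_i|I_i|$ and $v\mapsto v_ig_i$ are continuous and this is a strict inequality, there is an open neighbourhood $O$ of $(1/\sqrt3,1/\sqrt3,1/\sqrt3)$ in $S^2$, which we may take to lie inside the positive octant $\{v\in S^2: v_i>0\}$, on which $\min_i v_i|I_i|>\max_i v_ig_i$ still holds.

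Then for every $v\in O$, Theorem \ref{NEWHOUSE2} applied to the Cantor sets $v_1A_1,v_2A_2,v_3A_3$ shows that $v_1A_1+v_2A_2+v_3A_3$ is an interval. From $v_iA_i\subseteq v_iI_i$ we get $v_1A_1+v_2A_2+v_3A_3\subseteq v_1I_1+v_2I_2+v_3I_3$; and since $I_i$ is the convex hull of $A_i$, its endpoints lie in $A_i$, so the endpoints of $v_1I_1+v_2I_2+v_3I_3$ lie in $v_1A_1+v_2A_2+v_3A_3$. An interval containing the endpoints of the interval $v_1I_1+v_2I_2+v_3I_3$ and contained in it must equal it, giving the inclusion singled out in the first step, and hence the desired implication for all $v\in O$.

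I do not expect a genuine obstacle: Corollary \ref{Slice} is essentially a translation of Theorem \ref{NEWHOUSE2} into the language of hyperplane slices of products. The only point that needs a little care is the continuity argument guaranteeing that the "convex hull larger than largest gap" condition persists under small perturbations of the direction away from the diagonal, together with keeping $v$ in the positive octant so that each coordinate scaling $x\mapsto v_ix$ remains orientation preserving and therefore does not disturb the thickness, the convex hull, or the gap structure.
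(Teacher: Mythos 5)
Your proof is correct and takes essentially the same route as the paper: identify the slice set $\{a : H_v(a)\cap A\neq\emptyset\}$ with the sum $v_1A_1+v_2A_2+v_3A_3$, apply Theorem \ref{NEWHOUSE2} at $v=(1/\sqrt3,1/\sqrt3,1/\sqrt3)$ (where the hull/gap condition is exactly the hypothesis), use affine invariance of $S$ and continuity of $v\mapsto v_i|I_i|, v_i g_i$ to extend to an open $O$, and then observe that an interval sum set containing the extreme points must fill $v_1I_1+v_2I_2+v_3I_3$. You are slightly more explicit than the paper about the endpoint argument in the last step, which the paper compresses into a single sentence, but the argument is the same.
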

	
	We now compute the normalized thickness of some examples of Cantor sets. First, let $b>2$ be an integer and let $B=\{0,1,\dots,l\}$ where $l<b-1.$ We consider the set
	\[
	A^{B}_b=\{x\in [0,1]: \text{ some $b$-ary expansion of $x$ contains only digits in $B$} \}.
	\]
	\begin{lma}\label{THICK}
		Let $b,B, A^B_b$ be as above. The normalised thickness of $A^B_b$ is 
		\[
		S(A^B_b)=\frac{l}{b-1}.
		\]
	\end{lma}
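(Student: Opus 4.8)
The plan is to compute the thickness $C(A^B_b)$ directly from the self-similar gap structure and then read off $S(A^B_b)=C(A^B_b)/(C(A^B_b)+1)$; we may assume $l\ge 1$, since otherwise $A^B_b$ is a single point. First I would record the relevant structure. As noted above, $A^B_b$ is self-similar for the IFS $\{f_z\}_{z\in B}$ with $f_z(x)=x/b+z/b$, so it has uniform contraction ratio $1/b$; since $f_0$ fixes $0$, $f_l$ fixes $l/(b-1)$, and every point of $A^B_b$ lies between these, the convex hull of $A^B_b$ is $[0,l/(b-1)]$, and as thickness is invariant under affine maps I may work with this hull directly. From $A^B_b=\bigcup_{z=0}^l f_z(A^B_b)$ one sees that $f_z(A^B_b)$ has convex hull $[z/b,\,z/b+\tfrac{l}{b(b-1)}]$, and that consecutive images are separated by bounded gaps all of the same length $g_1:=\tfrac1b-\tfrac{l}{b(b-1)}=\tfrac{b-1-l}{b(b-1)}$. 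Iterating, the bounded gaps of $A^B_b$ organise into levels: a level-$m$ gap ($m\ge1$) sits between two adjacent level-$m$ cylinders $f_{w_1}\circ\cdots\circ f_{w_m}(A^B_b)$ ($w\in B^m$), each such cylinder has diameter $a_m:=\tfrac{l}{(b-1)b^m}$, and each level-$m$ gap has length $g_m:=\tfrac{b-1-l}{(b-1)b^m}$. Since $g_1>g_2>\cdots$, a bounded gap of length $\ge g_m$ is exactly a gap of level $\le m$.

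Next, I would fix an arbitrary bounded gap $I$, of level $m$ say, let $K$ be the level-$m$ cylinder lying immediately to the left of $I$ (so $|K|=a_m$ and the right end of the convex hull of $K$ is the left endpoint of $I$), and claim that the left bridge $b_L$ of $I$ equals $a_m$. Scanning leftward from the left endpoint of $I$, the first bounded gap of length $\ge|I|=g_m$ that one meets has its right endpoint exactly at the left endpoint of $K$: everything strictly between these two points lies in the interior of the convex hull of $K$, where only gaps of level $>m$ occur, so none of them qualifies; and if $K$ is \emph{not} the leftmost child of its level-$(m-1)$ parent, the gap immediately to the left of $K$ is a level-$m$ gap, which does qualify. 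If $K$ \emph{is} a leftmost child, its left endpoint coincides with that of its parent cylinder (because $f_0$ fixes left endpoints), and one repeats the argument one level up; this terminates at the first ancestor cylinder that is not a leftmost child — whose left neighbour is a gap of level $<m$, hence strictly longer than $g_m$ — or, in the extreme case, at the left endpoint $0$ of $A^B_b$, where the unbounded gap $(-\infty,0)$ qualifies. In all cases the terminating gap's right endpoint telescopes down to the left endpoint of $K$, so $b_L=|K|=a_m$. The reflection $x\mapsto\tfrac{l}{b-1}-x$ maps $A^B_b$ onto itself (it sends the digit $d$ to $l-d\in B$), so by symmetry the right bridge is also $a_m$. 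Hence $C(I)=\min\{b_L,b_R\}/|I|=a_m/g_m=\tfrac{l}{b-1-l}$, which is independent of the chosen gap.

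Taking the infimum over all bounded gaps then gives $C(A^B_b)=\tfrac{l}{b-1-l}$, and therefore
\[
S(A^B_b)=\frac{C(A^B_b)}{C(A^B_b)+1}=\frac{l/(b-1-l)}{l/(b-1-l)+1}=\frac{l}{l+(b-1-l)}=\frac{l}{b-1},
\]
as claimed. The one genuinely delicate point is the claim that the bridge never reaches past the single adjacent cylinder $K$, even when $I$ is an extreme (leftmost or rightmost) gap inside its parent and one must walk several levels up the hierarchy; the resolution, as above, is that a chain of leftmost-child cylinders all share a common left endpoint, so the walk terminates exactly at the left edge of $K$ no matter how far up it goes. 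If one prefers to avoid invoking the reflection symmetry, the mirror-image of the same bookkeeping handles the right bridge directly.
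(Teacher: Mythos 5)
Your proof is correct and follows the same basic route as the paper: identify the convex hull, compute the gap and bridge lengths at each level, and show $C(I)=l/(b-1-l)$ for every bounded gap $I$. Where the paper concludes uniformity of $C(I)$ with a one-line appeal to self-similarity, you supply the telescoping argument through chains of leftmost children (and the reflection symmetry on the right) that verifies the bridge of a level-$m$ gap is exactly the adjacent level-$m$ cylinder $a_m$ --- a worthwhile clarification, since thickness is not a purely local quantity and one really does need to rule out the bridge escaping the parent cylinder.
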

	\begin{proof}
		The convex hull of $A^B_b$ is $[0,a]$ where
		\[
		a=\sum_{i=1}^{\infty} \frac{l}{b^i}=\frac{l}{b-1}.
		\]
		The largest gaps of $A^B_b$ are of size 
		\[
		\frac{b-1-l}{b(b-1)}.
		\]
		Those gaps are located in each of the  intervals
		\[
		[0,1/b],\dots,[(l-2)/b,(l-1)/b].
		\]
		Let $I$ be one of those gaps. Then we see that
		\[
		C(I)=\frac{l}{b-1-l}.
		\]
		From here and the fact that $A^B_b$ is self-similar, we see that $C(A^B_b)=l/(b-1-l)$ and $S(A^B_b)=l/(b-1).$ 
	\end{proof}
	
	Next, we consider the middle third Cantor set $C_3=A^{\{0,2\}}_{3}.$ Following the above steps we see that $C(C_3)=1, S(C_3)=1/2.$ Let $k>1$. We consider the image of $C_3$ under the map $x\to x^k.$ We write this image as $C^k_3.$  We record here a simple observation which illustrates an application of thickness of Cantor set.
	\begin{thm}\label{Waring}\footnote{We were told by S. Chow that this result (for $k=2$) was conjectured in \cite[Conjecture 13]{ABT19} and answered in \cite{JLWZ19}. We thank him for providing the references.}
		For all $x\in [0,4]$, there exist $x_1,x_2,x_3,x_4\in C_3$ such that
		\[
		x=x_1^2+x_2^2+x_3^2+x_4^2.
		\]
		More generally, for each number $k>1$ there is an integer $1<n(k)\leq 2^k$ such that all $x\in [0,n(k)]$ can be written as
		\[
		x=\sum_{i=1}^{n(k)}x_i^k
		\]
		where $x_1,\dots,x_{n(k)}\in C_3.$
	\end{thm}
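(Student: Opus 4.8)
The plan is to use the thickness/sumset machinery developed above, specifically Theorem \ref{NEWHOUSE2}, applied to the Cantor sets $C_3^k$. First I would record the basic data of $C_3^k$, the image of the middle-third Cantor set under $x\mapsto x^k$. Its convex hull is $[0,1]$, since $0,1\in C_3$ and the map is increasing on $[0,1]$. The complement $\mathbb{R}\setminus C_3$ has largest bounded gap $(1/3,2/3)$; under $x\mapsto x^k$ this becomes $(3^{-k},2^k3^{-k})$, which is the largest bounded gap of $C_3^k$ (the map is monotone, so gaps map to gaps, and the first-level gap is the longest because $x\mapsto x^k$ is order-preserving and the pre-image interval is the longest gap — one should check monotonicity of the gap-length function, but this is routine since $x^k$ is convex and increasing). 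Call this gap length $g_k = (2^k-1)3^{-k}$. The key estimate is a lower bound for the normalized thickness $S(C_3^k)$. Because $x\mapsto x^k$ is a $C^1$ diffeomorphism of $[0,1]$ with derivative bounded between $0$ and $k$, it distorts ratios of nearby interval lengths by a bounded amount; more carefully, using self-similarity of $C_3$ at small scales and the fact that near any point the map is approximately linear, one shows $C(C_3^k)\to C(C_3)=1$ as one looks at deep gaps, so the infimum defining $C(C_3^k)$ is attained (or approached) by the shallow, large gaps. A direct computation on the first-level gap gives $C\big((3^{-k},2^k3^{-k})\big) = \min\{3^{-k},\,1-2^k3^{-k}\}\big/(2^k-1)3^{-k}$, and one checks this is the minimum over all gaps; hence $S(C_3^k)$ is an explicit positive number $s_k>0$.

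Next I would choose $n=n(k)$ to be the smallest integer with $n\cdot s_k \ge 1$, i.e. $n(k)=\lceil 1/s_k\rceil$. Since $s_k>0$ this is a finite integer, and since $s_k \le S(C_3)=1/2$ fails for $k>1$ (in fact $s_k<1/2$ when $k>1$, as the map shrinks the thickness), we need to verify $n(k)\le 2^k$; this follows from an explicit lower bound $s_k \ge c/2^k$ for a constant $c$, obtained from the first-level-gap computation above. We also need $n(k)>1$, which holds because $s_k<1$. Then apply Theorem \ref{NEWHOUSE2} with $A_1=\dots=A_{n(k)}=C_3^k$: the hypothesis $\sum S(A_i)=n(k)s_k\ge 1$ holds by choice of $n(k)$, and $\min|I_i|=1 > g_k = \max g_i$ holds since $g_k=(2^k-1)3^{-k}<1$ for all $k\ge 1$. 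The conclusion is that $C_3^k+\dots+C_3^k$ ($n(k)$ summands) is an interval; since it contains $0$ (all zero) and $n(k)$ (all one), it equals $[0,n(k)]$. For the special case $k=2$, one computes $g_2=3/9=1/3$ and the thickness of $C_3^2$ directly to find $S(C_3^2)=1/3$, so $n(2)=3$ would suffice from the bound — but $[0,3]$ is not $[0,4]$; so for $k=2$ one instead takes $n=4$ (which also satisfies $4\cdot S(C_3^2)=4/3\ge 1$) and gets $[0,4]$ exactly, matching the statement.

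The main obstacle is the thickness computation for $C_3^k$: unlike the affine Cantor sets handled in Lemma \ref{THICK}, the set $C_3^k$ is not self-similar, so $C(C_3^k)$ is a genuine infimum over infinitely many gaps of varying geometry rather than a single-level computation. The argument that the infimum is controlled — and in particular bounded below by something like $c/2^k$ — requires the bounded-distortion observation that $x\mapsto x^k$ sends the self-similar family of gaps of $C_3$ to a family whose local ratios converge to those of $C_3$, together with the monotonicity check that the shallowest (first-level) gap is the worst case. Everything else is bookkeeping: verifying the convex-hull-versus-largest-gap inequality, checking $1<n(k)\le 2^k$, and invoking Theorem \ref{NEWHOUSE2}.
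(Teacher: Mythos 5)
Your overall architecture is the same as the paper's: reduce to Astels' sumset theorem (Theorem~\ref{NEWHOUSE2}) applied to copies of $C_3^k$, with the load-bearing step being a lower bound on the normalized thickness $S(C_3^k)$. This is precisely the route the paper takes; the paper isolates the thickness computation as Lemma~\ref{kpower}, after which the theorem is a one-line application of Theorem~\ref{NEWHOUSE2}. So the approach matches. However, the key step --- the thickness bound --- is where your sketch has genuine gaps.

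First, the ``bounded distortion'' heuristic you invoke does not work where it matters. The map $x\mapsto x^k$ has vanishing derivative at $0$, so its distortion on the gaps of $C_3$ that accumulate at $0$ is \emph{not} bounded. Indeed, the gaps $(3^{-n},2\cdot3^{-n})$ of $C_3$ map to $(3^{-kn},2^k3^{-kn})$; for such a gap the nearest larger-or-equal gap to the left is $(-\infty,0)$, so $b_L=3^{-kn}$ and the ratio $b_L/|I|=1/(2^k-1)$ is \emph{independent of $n$}, not tending to $1$. Thus the worst gaps are not just the first-level one, and ``deep gaps recover $C(C_3)=1$'' is false; the paper's Lemma~\ref{kpower} handles this by a careful analysis using the convexity of $x^k$ and the self-similarity of $C_3$ on both sides of each gap, which is not a distortion estimate.

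Second, you conflate $C$ (thickness) with $S$ (normalized thickness). For $k=2$ the first-level gap gives $C(I)=(1/9)/(1/3)=1/3$, i.e.\ $C(C_3^2)=1/3$, hence $S(C_3^2)=\frac{1/3}{1+1/3}=1/4$, not $1/3$. Your claim that ``$n(2)=3$ would suffice'' is therefore wrong as a matter of the Astels criterion: $3\cdot\frac14=\frac34<1$, so $n=4$ is genuinely needed, not merely preferred to match the target interval $[0,4]$. Relatedly, the bound ``$s_k\geq c/2^k$ for some constant $c$'' is not enough to conclude $n(k)\leq 2^k$: you need $c\geq1$, i.e.\ the exact bound $S(C_3^k)\geq 1/2^k$ that Lemma~\ref{kpower} proves (and shows to be sharp). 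So the structure is right, but the actual thickness estimate is the substance of the proof, and as sketched it would not go through.
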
 
	
	\begin{lma}\label{kpower}
		Let $k,C^k_3$ be as above. Then 
		\[
		S(C^k_3)=\frac{1}{2^{k}}.
		\]
	\end{lma}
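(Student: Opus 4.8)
The plan is to reduce the computation of $S(C_3^k)$ to an analysis of the gaps of $C_3$ under the $C^1$ diffeomorphism $\phi(x)=x^k$, and to use the fact that normalized thickness, while not affinely conjugation-invariant in general, is still controlled by the derivative of $\phi$ on the relevant intervals. First I would recall the hierarchical gap structure of $C_3$: its convex hull is $[0,1]$, the largest (``first-level'') gap is $(1/3,2/3)$, flanked by the two copies $C_3\cap[0,1/3]$ and $C_3\cap[2/3,1]$, and every other bounded gap is an affine image of this picture inside one of the two self-similar pieces. Because $C_3$ is self-similar with ratio $1/3$ and $S(C_3)=1/2$, $C(C_3)=1$, every bounded gap $I$ of $C_3$ has $C(I)=1$ exactly, i.e. each gap has the same length as each of the two adjacent bridges that sit between it and the next comparable gap on either side.

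Next I would transport this to $C_3^k=\phi(C_3)$. The bounded gaps of $C_3^k$ are precisely the images $\phi(I)$ of the bounded gaps $I=(a,b)$ of $C_3$, and the bridges of $C_3^k$ around $\phi(I)$ are the images of the bridges of $C_3$ around $I$. For the thickness ratio of $\phi(I)$ we need $\min\{|\phi(b_L\text{-bridge})|,|\phi(b_R\text{-bridge})|\}/|\phi(I)|$. I would estimate each of these lengths by the mean value theorem: $|\phi(J)|=\phi'(\xi_J)|J|$ for some $\xi_J\in J$. The key point is that the gap $I$ and its two flanking bridges all lie within a single level-$m$ cylinder of $C_3$ (an interval of length $3^{-m}$), on which $\phi'$ varies by a bounded multiplicative factor; more importantly, the \emph{worst} ratio is attained in the limit as the cylinders accumulate at the point where $\phi'$ is smallest relative to its neighbours — and since $\phi'(x)=kx^{k-1}$ is increasing on $[0,1]$, for a gap $I=(a,b)$ the left bridge (closer to $0$) gets compressed more than the gap, which in turn gets compressed more than the right bridge. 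So $C(\phi(I))$ is governed by $|\phi(\text{left bridge})|/|\phi(I)|$, and the infimum over all gaps is obtained by pushing the configuration toward $0$, where $\phi(x)=x^k$ behaves like scaling by the derivative but the relevant comparison is between $\phi$ evaluated on two adjacent intervals of equal length abutting a point $t$: the ratio $\bigl((t)^k-(t-3^{-m})^k\bigr)/\bigl((t+3^{-m})^k-(t)^k\bigr)\to 1$ away from $0$ but degenerates near $0$. Working this out at the leftmost accumulation point $t\to 0$ with two equal-length intervals $[0,\ell]$ and $[\ell,2\ell]$ gives the ratio $\ell^k/((2\ell)^k-\ell^k)=1/(2^k-1)$.

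Hence I expect $C(C_3^k)=1/(2^k-1)$, and therefore
\[
S(C_3^k)=\frac{C(C_3^k)}{C(C_3^k)+1}=\frac{1/(2^k-1)}{1/(2^k-1)+1}=\frac{1}{2^k},
\]
as claimed. To make this rigorous I would (i) verify that the infimum defining $C(C_3^k)$ really is approached along the sequence of gaps whose left bridge abuts $0$ — this uses that on any interval bounded away from $0$ the distortion of $\phi$ is bounded, so gaps there have $C(\phi(I))$ bounded below by something strictly larger than $1/(2^k-1)$, while the self-similar nested copies of $C_3$ sitting in $[0,3^{-m}]$ produce gaps with ratios tending to $1/(2^k-1)$; and (ii) check the convex-hull normalization, namely that the affine map sending $\mathrm{Conv}(C_3^k)=[0,1]$ to $[0,1]$ is the identity so no rescaling is needed.

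The main obstacle is step (i): confirming that the thickness is an honest infimum attained only in the limit at $0$, and that no gap deeper inside $[0,1]$ — where $\phi$ has larger derivative but the gap/bridge length ratios are distorted in a possibly unfavorable way — produces a smaller value. This requires a clean distortion estimate comparing $\phi$ on two adjacent equal-length subintervals of a level-$m$ cylinder $[c,c+3^{-m}]$ of $C_3$, uniformly in $m$ and in the cylinder's position $c\in[0,1-3^{-m}]$, and then checking that the resulting lower bound on $C(\phi(I))$ is minimized as $c\to 0$. Once that monotonicity-in-$c$ claim is established, the rest is the elementary computation above.
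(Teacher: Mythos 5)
Your computation of the putative infimum $1/(2^k-1)$ and the final algebra $S=C/(C+1)=1/2^k$ are correct, and the identification of where the infimum is attained (the gaps abutting $0$) matches the paper. But the step on which your whole argument rests is not true: you assert that ``the bridges of $C_3^k$ around $\phi(I)$ are the images of the bridges of $C_3$ around $I$,'' and consequently that $C(\phi(I))$ is computed by $\min\{|\phi(\text{left bridge})|,|\phi(\text{right bridge})|\}/|\phi(I)|$. That would be the case if $\phi$ were affine, but since $\phi(x)=x^k$ has increasing derivative, a gap $J\subset C_3$ lying to the right of $I$ with $|J|<|I|$ may well have $|\phi(J)|\ge |\phi(I)|$. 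In that case $\phi(J)$, not $\phi(I^+)$, is the nearest gap on the right of comparable length, so the true $b_R$ of $\phi(I)$ is \emph{strictly shorter} than $|\phi(\text{right bridge})|$. Your formula overestimates $b_R$, and without a separate lower bound on $b_R$ you cannot conclude that the minimum is ``governed by the left bridge'' — you have only estimated one of the two quantities entering the $\min$.

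The paper closes exactly this hole. Writing $I=(a,a+\Delta)$ with left bridge ending at $a-\Delta$ and right bridge ending at $a+2\Delta$, it observes that $b_L \ge a^k-(a-\Delta)^k$ (a clean lower bound, since $\phi$ compresses to the left), but that for $b_R$ one must account for sub-gaps of $C_3\cap[a+\Delta,a+2\Delta]$ whose images grow. Using the self-similarity of that block (so any gap $(a+\Delta+\alpha,a+\Delta+\alpha+\beta)$ there has $\beta\le\alpha$) together with convexity of $x\mapsto x^k$, it finds the threshold $\delta\in(0,\Delta]$ with $(a+\Delta+2\delta)^k-(a+\Delta+\delta)^k=(a+\Delta)^k-a^k$ and deduces $b_R\ge(a+\Delta+\delta)^k-(a+\Delta)^k\ge c_k\,\bigl((a+\Delta)^k-a^k\bigr)$ with $c_k=(1.5^k-1)/(2^k-1.5^k)$, and then verifies $c_k\ge 1/(2^k-1)$ so the overall minimum is indeed controlled by the $b_L$ estimate. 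Your plan, which leans on a generic bounded-distortion estimate, would still need to produce an analogue of this $b_R$ bound; as written it does not, and the ``obstacle'' you flag in step (i) is not the real one — the real one is that the right-bridge can shrink rather than grow under $\phi$, and you have to rule out that it shrinks past $1/(2^k-1)$ of $|\phi(I)|$.
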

	\begin{proof}
		Suppose that $I=(a,a+\Delta)$ is a bounded gap of $C_3.$ Then we see that the next gap on the right of $I$ which is not smaller than $I$ has left endpoint $a+2\Delta.$ Similarly, the next gap on the left of $I$ which is not smaller than $I$ has right endpoint $a-\Delta.$ For the middle third Cantor set $C_3,$ we always have $a\geq \Delta.$
		
		After taking the $k$-th power map, we have points
		\[
		(a-\Delta)^k, a^k, (a+\Delta)^{k}, (a+2\Delta)^k.
		\]  
		The gap $I$ is now transformed into a gap of size
		\[
		|a^k- (a+\Delta)^{k}|.
		\]
		This length is increasing as a function of $a$ as well as $\Delta.$ Thus we see that the number $b_L$ in the definition of thickness is at least $|(a-\Delta)^k-a^k|.$ We need to take care of $b_R.$ By the above argument we see that $b_R$ is at most $(a+2\Delta)^k-(a+\Delta)^k.$ However, we need a lower bound for $b_R.$ To do this, we need to study how the  gaps of $C_3$ in $[a+\Delta,a+2\Delta]$ are transformed under the map $x\to x^k.$ Notice that a gap inside $[a+\Delta, a+2\Delta]$ might become larger than $|a^k- (a+\Delta)^{k}|$ after taking the $k$-th power. Note that by the convexity of $x\to x^k,$ there is a $\delta\in (0,\Delta]$ such that
		\[
		(a+\Delta+2\delta)^k-(a+\Delta+\delta)^k=(a+\Delta)^k-a^k.
		\]
		Let $(a+\Delta+\alpha,a+\Delta+\alpha+\beta)$ be a bounded gap of $C_3\cap [a+\Delta,a+2\Delta]$ with $\alpha, \beta>0$ and $\alpha+\beta\leq\Delta.$ Then we have $\beta\leq \alpha.$ This follows by self-similarity, since $[a+\Delta,a+2\Delta]$ is a similar copy of $C_3$ with contraction factor $\Delta.$ From here we see that in order to find gaps of $C^k_3$ on the right side of $(a+\Delta)^k$ with length at least $|a^k-(a+\Delta)^k|,$ one has to search the gaps of   $C^k_3$ in $[(a+\Delta+\delta)^k,\infty).$  Thus $b_R$ is at least \[(a+\Delta+\delta)^k-(a+\Delta)^k.\]
		Observe that for $x=\delta/(a+\Delta),$
		\[
		\frac{b_R}{(a+\Delta)^k-a^k}\geq\frac{(a+\Delta+\delta)^k-(a+\Delta)^k}{(a+\Delta+2\delta)^{k}-(a+\Delta+\delta)^k}=\frac{(1+x)^k-1}{(1+2x)^k-(1+x)^k}.
		\]
		Since $0<\delta\leq\Delta\leq a,$ we see that the above is at least (the value when $x=1/2$)
		\[
		c_k=\frac{1.5^k-1}{2^k-1.5^k}.
		\]
		It follows that $b_R\geq c_k((a+\Delta)^k-a^k).$ We see that
		\begin{align}\label{**}
		\min\{b_L,b_R\}/|a^k- (a+\Delta)^{k}|\geq\min\left\{\frac{a^k-(a-\Delta)^k}{(a+\Delta)^{k}-a^k},c_k\right\}\geq \frac{1}{2^k-1}.
		\end{align}
		For the last inequality, notice that
		\[
		\frac{a^k-(a-\Delta)^k}{(a+\Delta)^{k}-a^k}=\frac{1-(1-\Delta/a)^k}{(1+\Delta/a)^k-1}.
		\]
		As $(a,a+\Delta)$ is a bounded gap of $C_3,$ we see that $0<\Delta\leq a.$ The function
		\[
		x\in [0,1]\to \frac{1-(1-x)^k}{(1+x)^k-1}
		\]
		takes the minimum at $x=1$ with the value $1/(2^k-1).$ For all $k>1,$ we have
		\[
		\frac{1}{2^k-1}\leq \frac{1.5^k-1}{2^k-1.5^k}.
		\]
		From here we conclude (\ref{**}). As (\ref{**}) holds for all bounded gaps of $C^k_3$ we see that
		\[
		C(C^k_3)\geq \frac{1}{2^k-1}
		\]
		and
		\[
		S(C^k_3)\geq \frac{1}{2^k}.
		\]
		On the other hand, let $n\geq 1$ be an integer. Then $I=(3^{-n},2\times 3^{-n})$ is a bounded gap of $C_3.$ Now, in $C_3,$ the next gap on the left of $I$ with length at least $|I|$ is an infinite gap, i.e. $(-\infty,0).$ Thus in $C_3^k,$ the next gap on the left of $(3^{-kn},2^{k}3^{-kn})$ with at least the same length is again $(-\infty,0).$ This shows that the inequality (\ref{**}) is sharp and the proof concludes.
	\end{proof}
	From here, we see that Theorem \ref{Waring} follows.
	
	\begin{proof}[Proof of Theorem \ref{Waring}]
		By Lemma \ref{kpower} and Theorem \ref{NEWHOUSE2}, it is enough to check the gap conditions stated in Theorem \ref{NEWHOUSE2}. As the convex hull of $C_3^k$ is $[0,1]$ and the largest gap is strictly shorter than $1$, the result follows.
	\end{proof}
	
	\section{Radial projections of fractal sets: general overview}\label{Projection}
	Let $d\geq 2$ be an integer. Let $x\in\mathbb{R}^d$ be a point. Recall that $\Pi_x$ is defined as follows
	\[
	\Pi_x(y)=\frac{y-x}{|x-y|}\in S^{d-1}
	\] 
	for $y\neq x.$  The following result was proved in \cite{O19}.
	\begin{thm}\label{Radial}
		Let $d\geq 2$ be an integer. Let $A\subset\mathbb{R}^d$ be a Borel set with $\Haus A>d-1.$ Then $\Pi_x(A)$ has positive Lebesgue measure for almost all $x\in\mathbb{R}^d.$
	\end{thm}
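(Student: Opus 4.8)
The plan is to run the classical Marstrand--Mattila energy argument, with the family of radial projections $\{\Pi_x\}_{x\in\mathbb R^d}$ in place of the orthogonal projections and the vantage point $x$ in place of the plane parameter. Since $\Haus A>d-1$, fix $s$ with $d-1<s<\min\{\Haus A,d\}$; by Frostman's lemma, applied to a compact subset of $A$ of positive finite $\mathcal H^s$-measure (which exists as $s<\Haus A$), there are a compact set $F\subseteq A$ and a Borel probability measure $\mu$ on $F$ with $\mu(B(w,r))\le C r^{s}$ for all $w,r$; in particular $\mu$ has no atoms, so $\Pi_x$ is defined $\mu$-almost everywhere for every $x$. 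Let $\sigma$ be the surface measure on $S^{d-1}$ and $\nu_x:=(\Pi_x)_*\mu$, a Borel probability measure on $S^{d-1}$ carried by the analytic (hence $\sigma$-measurable) set $\Pi_x(F\setminus\{x\})\subseteq\Pi_x(A)$. If $\nu_x$ is absolutely continuous with an $L^2(\sigma)$-density for a.e.\ $x$, then $\sigma(\Pi_x(A))>0$ for a.e.\ $x$, since a nonzero measure absolutely continuous with respect to $\sigma$ cannot be carried by a $\sigma$-null set; so it suffices to prove this absolute continuity. Finally, because $\Pi_{\lambda x}(\lambda F)=\Pi_x(F)$ for $\lambda>0$, the set of exceptional $x$ is dilation invariant, so rescaling $F$ into the unit ball and letting the scaling factor tend to infinity reduces us to showing $\nu_x\ll\sigma$ with $L^2$ density for a.e.\ $x\in B(0,1)$.

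For the latter I would use the standard estimate for the $L^2$-norm of a measure on $S^{d-1}$ (obtained by mollifying at scale $\delta$ and using weak-$*$ compactness in $L^2$): with the convention $\|\nu_x\|_{L^2(\sigma)}=\infty$ unless $\nu_x\ll\sigma$ with $L^2$ density,
\[
\|\nu_x\|_{L^2(\sigma)}^2\ \le\ C_d\,\liminf_{\delta\to0}\ \delta^{-(d-1)}\,(\nu_x\times\nu_x)\bigl(\{(\theta,\theta'):|\theta-\theta'|\le\delta\}\bigr).
\]
Since $|\Pi_x(y)-\Pi_x(z)|=2\sin(\tfrac12\angle yxz)$ and $\nu_x\times\nu_x=(\Pi_x\times\Pi_x)_*(\mu\times\mu)$, the inner event pulls back to $\{(y,z):\angle yxz\le c\,\delta\}$ for an absolute constant $c$. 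Integrating in $x$ over $B(0,1)$ and using Fatou in $\delta$ together with Tonelli to exchange the $x$- and $(y,z)$-integrals,
\[
\int_{B(0,1)}\!\!\|\nu_x\|_{L^2(\sigma)}^2\,dx\ \le\ C_d\,\liminf_{\delta\to0}\ \delta^{-(d-1)}\!\iint_{F\times F}\! G_{c\delta}(y,z)\,d\mu(y)\,d\mu(z),
\]
where $G_\eta(y,z):=\mathcal L^d\bigl(\{x\in B(0,1):\angle yxz\le\eta\}\bigr)$, so everything is reduced to estimating $G_\eta$.

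The crux is the geometric bound
\[
G_\eta(y,z)\ \le\ C_d\,\eta^{d-1}\bigl(1+|y-z|^{-(d-1)}\bigr)\quad\text{for }|y-z|\ge C_0\eta,\qquad\text{and always }G_\eta\le C_d.
\]
For the first one, write $\ell=|y-z|$. If $\angle yxz\le\eta$, then computing the area of the triangle $xyz$ in two ways gives $\operatorname{dist}(x,\mathrm{line}(y,z))=|x-y|\,|x-z|\sin\angle yxz/\ell\le\eta|x-y|\,|x-z|/\ell$, and since $\ell\ge C_0\eta$ and $|x-y|,|x-z|\le2$, the foot of this perpendicular must lie on one of the two half-lines of $\mathrm{line}(y,z)$ exterior to the segment $[y,z]$ (a foot inside $[y,z]$ would force $\angle yxz\ge\pi/2$). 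Parametrising that foot by its distance $a\in(0,2]$ from the nearer endpoint, the admissible $x$ fill, for each $a$, a $(d-1)$-ball of radius $\lesssim\eta\,a(a+\ell)/\ell$, and $\int_0^2\!\bigl(\eta\,a(a+\ell)/\ell\bigr)^{d-1}da$ (split at $a=\ell$) yields the claimed bound. Feeding this back: pairs with $|y-z|<C_0c\delta$ contribute at most $C_d\,\delta^{-(d-1)}(\mu\times\mu)(\{|y-z|<C_0c\delta\})\lesssim\delta^{\,s-(d-1)}\to0$ by the Frostman estimate, while the remaining pairs contribute at most $C_d\iint(1+|y-z|^{-(d-1)})\,d\mu\,d\mu=C_d(1+I_{d-1}(\mu))$, finite precisely because $s>d-1$. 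Hence $\int_{B(0,1)}\|\nu_x\|_{L^2(\sigma)}^2\,dx<\infty$, so $\nu_x\ll\sigma$ with $L^2$ density for a.e.\ $x\in B(0,1)$, which by the reductions completes the proof.

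The step I expect to be the main obstacle is the geometric lemma, and within it, pinning down the power of $|y-z|$ as exactly $-(d-1)$: this is what turns the double integral into the Riesz energy $I_{d-1}(\mu)$, whose finiteness is equivalent to $\Haus(\operatorname{supp}\mu)>d-1$ and is therefore the single point where the hypothesis enters; any weaker power of $|y-z|$ would break the argument. One must also be careful to split off the ``near-diagonal'' pairs $|y-z|\asymp\delta$ — where $G_\eta$ is only $O(1)$ rather than small — and absorb them using the Frostman decay of $\mu$ rather than the geometric lemma.
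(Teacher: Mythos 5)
The paper does not prove this theorem; it cites Orponen \cite{O19}, whose paper establishes considerably stronger quantitative information (bounds on the Hausdorff dimension of the exceptional set of vantage points). What you give instead is the classical Marstrand--Mattila $L^2$ energy argument transported to the radial setting, which is entirely adequate for the ``a.e.\ $x$ with respect to Lebesgue measure'' statement asked for here. The skeleton is sound: the Frostman reduction, the scaling reduction to $x\in B(0,1)$, the $L^2$-density criterion via mollification and weak-$*$ compactness on $S^{d-1}$, the pullback of the near-diagonal event to $\{\angle yxz\le c\delta\}$, the Fatou/Tonelli interchange, and finally the splitting of the double integral into the near-diagonal piece (killed by the Frostman decay with exponent $s>d-1$) and the far piece (controlled by the finite Riesz energy $I_{d-1}(\mu)$) are all correctly executed. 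The price of avoiding Orponen's machinery is that one only gets an a.e.\ statement and no exceptional-set dimension bound; the payoff is a self-contained, elementary argument.

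One local justification is wrong, though the step it supports is correct. You assert parenthetically that ``a foot inside $[y,z]$ would force $\angle yxz\ge\pi/2$.'' This is false: take $y=(0,0)$, $z=(\tfrac12,0)$, $x=(\tfrac14,\tfrac32)$; the foot $(\tfrac14,0)$ lies in $[y,z]$, $|x-y|,|x-z|\le 2$, yet $\angle yxz\approx 19^\circ$. What is true, and what you actually need, is the following quantitative fact: if the foot $P$ lies in $[y,z]$ then
\[
\angle yxz=\arctan\!\frac{|y-P|}{h}+\arctan\!\frac{|z-P|}{h}\le\eta
\]
forces $|y-P|,|z-P|\le h\tan\eta$, hence $\ell\le 2h\tan\eta\le 4\tan\eta$ (using $h\le 2$), contradicting $\ell\ge C_0\eta$ once $C_0$ is chosen large enough. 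So the foot is indeed exterior to $[y,z]$ in the regime $\ell\ge C_0\eta$, but for this reason rather than the one you stated. With that repair the geometric lemma, and hence the proof, goes through.
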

	Intuitively speaking, the above theorem says that if $A\subset\mathbb{R}^d$ is large enough, then $\Pi_x(A)$ should also be large enough, at least generically. If the set $A$ is a Cartesian product of self-similar sets with the open set condition, we believe that a much stronger result should hold. See Conjecture \ref{RadialConj}. In our situation, it is convenient to explicitly state a special case.
	\begin{conj}\label{conj: radial missing digits}
		Let $d>1$ be an integer. Let $b_1,\dots,b_d>1$ be integers. For each $i\in\{1,\dots,d\}$,  let $B_1\in \{0,\dots,b_i-1\}$ be a choice of digits in base $b_i$ and
		\[
		A_i=\{x\in [0,1]: \text{ some $b_i$-ary expansion of $x$ contains only digits in $B_i$} \}.
		\] 
		Consider the set $A=A_1\times\dots\times A_d.$ If $\Haus A>d-1,$ then $\Pi_x (A)$ has positive Lebesgue measure for all $x\in\mathbb{R}^d.$ If moreover $b_1,\dots,b_d$ are multiplicatively independent, then $\Pi_x(A)$ contains non-empty interior for all $x\in\mathbb{R}^d.$
	\end{conj}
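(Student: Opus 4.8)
I would attack the two parts separately and in both cases try to transfer the difficulty onto orthogonal projections of $A=A_1\times\dots\times A_d$.

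For the positive-measure assertion, start from Orponen's theorem (Theorem~\ref{Radial}), which already gives $|\Pi_x(A)|>0$ for a.e.\ $x$; the task is to upgrade ``a.e.'' to ``every''. Let $E=\{x\in\mathbb R^d:|\Pi_x(A)|=0\}$. Two soft facts are available. First, $E$ is closed: if $x_n\to x_0$ and $v=\Pi_{x_0}(a)$ with $a\in A\setminus\{x_0\}$ then $\Pi_{x_n}(a)\to v$, so $\Pi_{x_0}(A)$ lies in the Kuratowski lower limit of the $\Pi_{x_n}(A)$ and $x\mapsto|\Pi_x(A)|$ is lower semicontinuous. Second, $E$ is invariant under the expanding semigroup generated by $w\mapsto M w-\mathbf z$, $\mathbf z\in B_1\times\dots\times B_d$, where $M=\operatorname{diag}(b_1,\dots,b_d)$: indeed $A=\bigcup_{\mathbf z}M^{-1}(A+\mathbf z)$ and $\Pi_x\big(M^{-1}(A+\mathbf z)\big)=\nu\big(\Pi_{Mx-\mathbf z}(A)\big)$ for a fixed diffeomorphism $\nu$ of $S^{d-1}$, so $Mx-\mathbf z\notin E\Rightarrow x\notin E$. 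Since $\operatorname{diag}(b_i)$ expands, a non-empty $E$ cannot be bounded (the orbit of any point, chosen away from the finitely many fixed points, escapes to infinity), so $E\neq\emptyset$ would force a point of $E$ to escape to infinity in some direction $u$; but along such a ray $\Pi_x(A)$ is, after rescaling, asymptotic to the orthogonal projection $P_{u^\perp}(A)$ onto the hyperplane $u^\perp$, whence $|P_{u^\perp}(A)|_{d-1}=0$. Thus the positive-measure claim reduces to: \emph{every orthogonal projection of $A$ onto a hyperplane has positive $(d-1)$-dimensional measure}.

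This last statement --- a Marstrand projection theorem with \emph{no} exceptional directions for products of self-similar Cantor sets --- is where the hypotheses really bite and is the step I expect to be the main obstacle. For multiplicatively independent bases it should follow from a Hochman--Shmerkin-type analysis: a hyperplane projection of $A$ is again a self-similar-type set whose pieces overlap according to the rotation of $\mathbb T^{d-1}$ with angle $\Lambda=(\log b_1/\log b_2,\dots,\log b_1/\log b_d)$, and multiplicative independence (via Schanuel/Baker, cf.\ Section~\ref{GMT}) makes that rotation minimal and rules out the resonances that could collapse the projection; one then has to strengthen the resulting full dimensionality to absolute continuity using $\Haus A>d-1$. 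For multiplicatively dependent bases the analogous statement is, to my knowledge, open already for $d=2$ and resonant pairs of missing-digit sets (e.g.\ bases $3$ and $9$), and is exactly the Furstenberg-type difficulty referred to after Conjecture~\ref{fu} and in \cite{Fu2,Wu,Sh}.

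For the non-empty-interior assertion one wants the quantitative strengthening: an \emph{open} set of directions $v$ such that the weighted sumset $\sum_{i=1}^d v_i(A_i-x_i)$ contains an interval. Linearising the radial map $\Pi_x$ at a point $p$ deep inside $A$ and zooming into a cylinder of $A$ near $p$ (a diagonally rescaled copy of $A$), this becomes: the image of $A$ under a suitable linear functional, i.e.\ a sumset $\lambda_1A_1+\dots+\lambda_dA_d$, contains an interval, \emph{stably} under small perturbations of the $\lambda_i$. Theorem~\ref{NEWHOUSE2} together with the openness of the gap condition (the argument preceding Corollary~\ref{Slice}) gives exactly this once $\sum_{i=1}^d S(A_i)\ge 1$ with the convex-hull/gap condition holding at a deep enough level --- and this works for every centre $x$, with no arithmetic input. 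Multiplicative independence is therefore needed only to cover the complementary, low-thickness regime, where this approach fails: taking $B_i$ whose missing digits form a single central block of relative length $m$, one has $\Haus A_i\to 1$ as $b_i\to\infty$ while the normalised thickness stays near $(1-m)/(2m)$, which is $<1/d$ for $m$ close to $1$, so $\sum_i S(A_i)<1$ and Theorems~\ref{NEWHOUSE2}, \ref{NEWHOUSEMULTI} say nothing near the threshold $\Haus A=d-1$. Replacing thickness there by a dimension-sensitive substitute --- again a Furstenberg-type slicing/projection theorem for products of self-similar sets with multiplicatively independent ratios --- is the missing ingredient, which is why the statement is only conjectured.
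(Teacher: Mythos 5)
This statement is a \emph{conjecture} in the paper, not a theorem: the paper offers no proof and explicitly says ``Currently, almost nothing is known towards Conjectures \ref{RadialConj}, \ref{conj: radial missing digits}.'' So there is no paper proof to compare against, and you were right to present only a strategy sketch and to flag the true open ingredient at the end. Your identification of the core missing step --- a Marstrand-type theorem for $A$ with \emph{no} exceptional directions, upgraded from full dimension (Hochman--Shmerkin/Shmerkin territory) to absolute continuity --- matches the paper's own discussion following the conjecture, and your observation that the Newhouse/Astels thickness machinery handles only the high-thickness regime (and can fail near the threshold $\Haus A=d-1$ because normalised thickness is not monotone in Hausdorff dimension) is correct and is exactly why the paper's unconditional results (Theorems~\ref{EGRSII}--\ref{EGRS4}) stop short of the conjecture.

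There is, however, a genuine gap in the ``soft'' reductions you propose for the positive-measure part. The claim that $E=\{x:|\Pi_x(A)|=0\}$ is closed rests on the inclusion $\Pi_{x_0}(A)\subset\operatorname{Li}_n\Pi_{x_n}(A)$ (Kuratowski lower limit), but such an inclusion does \emph{not} control Lebesgue measure: one can have $\Pi_{x_0}(A)\subset\operatorname{Li}_n\Pi_{x_n}(A)$ with each $\Pi_{x_n}(A)$ null and $\Pi_{x_0}(A)$ of positive measure (compare $F_n=\{k/n:0\le k\le n\}$ with Kuratowski limit $[0,1]$). The same issue reappears when you pass from ``$|\Pi_{x_n}(A)|=0$ along a ray escaping in direction $u$'' to ``$|P_{u^\perp}(A)|_{d-1}=0$.'' Some additional input specific to $\Pi_x$ on a product of self-similar Cantor sets would be needed to justify either passage; as written, neither lower semicontinuity of $x\mapsto|\Pi_x(A)|$ nor the radial-to-orthogonal limit is established. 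The self-similarity/escape argument ($x\in E\Rightarrow Mx-\mathbf z\in E$, with $M=\operatorname{diag}(b_i)$ expanding, so $E\neq\emptyset$ forces unbounded orbits inside $E$) is sound, but without the limiting step it does not complete the reduction to orthogonal projections.
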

	As $A_1,\dots,A_d$ are self-similar sets with the open set condition, this conjecture is indeed a special case of Conjecture \ref{RadialConj} under the multiplicative independence of $b_1,\dots,b_d.$ We believe that for the positivity of the Lebesgue measure, it is not necessary to require the multiplicative independence of $b_1,\dots,b_d.$ Currently, almost nothing is known towards Conjectures \ref{RadialConj}, \ref{conj: radial missing digits}. For Conjecture \ref{RadialConj}, see\cite{HS12} or \cite{Sh} for related results. Although results in \cite{HS12}, \cite{Sh} are for linear projections rather than radial projections, it is possible to use the arguments to prove results for linear projections. For example, let $A$ be as in Conjecture \ref{RadialConj} with $d=2.$ Results in \cite{Sh} provide estimates of the size $l^\delta\cap A$ ($l^\delta$ is the $\delta$-neighbourhood of $l$) uniformly across $\delta>0$ and lines $l\subset\mathbb{R}^2$ with direction strictly away from being parallel with the coordinate axes. In particular, for each $\epsilon>0,$ for all small enough $\delta>0$ and all lines $l$, to cover $l^\delta\cap A,$ it is enough to use at most $\delta^{-(\Haus A-1+\epsilon)}$ many $\delta$-balls. Using this estimate, we can conclude that $\Pi_{(0,0)}A$ cannot be small. In fact, suppose that $\Pi_{(0,0)}A$ can be covered by at most $0.0001\delta^{-(1-2\epsilon)}$ many $\delta$-balls. Then we are forced to have a fibre of $\Pi_{(0,0)}(A)$ which cannot be covered by
	\[
	\delta^{-\Haus A}/\delta^{-(1-2\epsilon)}=\delta^{-(\Haus A-1+2\epsilon)}
	\]
	many $\delta$-balls. Fibres of $\Pi_{(0,0)}(A)$ are lines passing through the origin. In particular, this contradicts with the results in \cite{Sh}. From here, we conclude that $\boxd \Pi_{(0,0)}A=1.$ Here $\boxd \Pi_{(0,0)}A$ is the box dimension of $\Pi_{(0,0)}A$. It is possible to upgrade this result to $\Haus \Pi_{(0,0)}A=1$ by carefully studying a certain regular measure $\mu$ supported on $A$ and its image under $\Pi_{(0,0)}.$ Of course, $(0,0)$ can be replaced with any other point in $\mathbb{R}^2.$
	
	\section{Proofs of Theorems \ref{Number}, \ref{EGRSII}, \ref{EGRSIII} and \ref{EGRS4}}
	Now we prove Theorem \ref{Number}. 
	
	\begin{proof}[Proof of Theorem \ref{Number}]
		Consider the set $N_3^{B_3}\times N_5^{B_5}\times N_7^{B_7}$ where
		\[
		B_3=\{0,1\}, B_5=\{0,1,2\}, B_7=\{0,1,2,3\}.
		\]
		Now we construct the sets
		\[
		A_3=\{x\in [1,3]: \text{ some 3-ary expansion of $x$ contains only digits in $B_3$}  \},
		\]
		\[
		A_5=\{x\in [1,5]: \text{ some 5-ary expansion of $x$ contains only digits in $B_5$}  \},
		\]
		\[
		A_3=\{x\in [1,7]: \text{ some 7-ary expansion of $x$ contains only digits in $B_7$}  \}.
		\]
		Then we see that (\cite[Section 7.1]{Fa}) $\Haus A_3\times A_5\times A_7=\log 2/\log 3+\log 3/\log 5+\log 4/\log 7>2.$ Let $\{\}$ be the fractional part symbol, i.e. for  a real number $a,$ $\{a\}\in [0,1)$ is the unique number $t$ in $[0,1)$ so that $t-a\in\mathbb{Z}.$ For each integer $k\geq 1$, consider the line $l_k$ passing through the origin with direction vector
		\[
		(1,5^{\{k\log 3/\log 5\}},7^{\{k\log 3/\log 7\}}).
		\]
		If $l_k\cap A_3\times A_5\times A_7\neq\emptyset$ then we take a point $(x,y,z)\in l_k\cap A_3\times A_5\times A_7.$ Consider the point
		\[
		(x',y',z')=(3^k x, 5^{[k\log 3/\log 5]} y, 7^{[k\log 3/\log 7]} z).
		\]
		Since $y=5^{\{k\log 3/\log 5\}} x, z=7^{\{k\log 3/\log7\}}x$, so we see that
		\[
		5^{[k\log 3/\log 5]} y=5^{k\log 3/\log 5}x=3^k x, 7^{[k\log 3/\log 7]} z=7^{k\log 3/\log 7}x=3^k x.
		\]
		Thus we see that $x'=y'=z'.$ It is straightforward to see that the $3$-ary expansion of $x'$ contains only digits in $B_3$, the $5$-ary expansion of $y'$ contains only digits in $B_5$ and the $7$-ary expansion of $z'$ contains only digits in $B_7.$ Taking the integer part, we see that
		\[
		[x']=[y']=[z'] \in N_{3,5,7}^{B_3,B_5,B_7}.
		\]
		
		Under Conjecture \ref{RadialConj}, we see that there are infinitely many integers $k\geq 1$ such that
		\[
		l_k\cap A_3\times A_5\times A_7\neq\emptyset.
		\]
		Indeed, the direction vector of $l_k$ is
		\[
		(1,5^{\{k\log 3/\log 5\}},7^{\{k\log 3/\log 7\}}).
		\]
		Under Schanuel's conjecture,  $1,\log 3/\log 5, \log 3/\log 7$ are linearly independent over $\mathbb{Q},$ and therefore the sequence
		\[
		(\{k\log 3/\log 5\},\{k\log 7/\log 5\})_{k\geq 0}
		\]
		equidistributes in $[0,1]^2.$ Thus the closure of $\cup_{k\geq 0} l_k$ contains the cone $C$ spanned by the origin and the set
		\[
		\{1\}\times [1,5]\times [1,7].
		\]
		Now if $\Pi_{(0,0,0)}(A_3\times A_5\times A_7 \cap C )$ has non-empty interior, then $l_k$ will intersect $A_3\times A_5\times A_7 \cap C$ infinitely often. Observe that $A_3\times A_5\times A_7$ intersects the interior of $C.$ Thus there is a point $a\in A_3\times A_5\times A_7$ and a $r>0$ such that $B_a(r)\subset C.$ Since $A_3\cap [1,2],A_5\cap [1,2],A_7\cap [1,2]$ are self-similar with contraction ratios $1/3,1/5,1/7$ respectively, it is possible to find linear maps $f_3,f_5,f_7:\mathbb{R}\to\mathbb{R}:$
		\[
		f_3(x)=3^{-l_3}x+t_3,f_5(x)=5^{-l_5}x+t_5,f_7(x)=7^{-l_7}x+t_7
		\]
		for some positive integers $l_3,l_5,l_7$ and real numbers $t_3,t_5,t_7$ such that
		\[
		f_3(A_3\cap [1,2])\subset A_3, f_5(A_5\cap [1,2])\subset A_5, f_7(A_7\cap [1,2])\subset A_7,
		\]
		and
		\[
		f_3\times f_5\times f_7 (A_3\times A_5\times A_7\cap [1,2]^3)\subset B_a(r).
		\]
		We denote $A'_3=f_3(A_3\cap [1,2]), A'_5=f_5(A_5\cap [1,2]), A'_7=f_7(A_7\cap [1,2]).$ Then we see that
		\[
		A'_3\times A'_5\times A'_7\subset A_3\times A_5\times A_7\cap C.
		\]
		Moreover, since $f_3\times f_5\times f_7$ is linear and invertible and thus bi-Lipschitz,  by \cite[Corollary 2.4]{Fa}, we have
		\[
		\Haus (A'_3\times A'_5\times A'_7)=\Haus(A_3\times A_5\times A_7\cap [1,2]^3).
		\]
		Next, observe that $A_3\times A_5\times A_7\cap [1,2]^3$ is a translation of $K_3\times K_5\times K_7$ where
		\[
		K_3=\{x\in [0,1]: \text{ some 3-ary expansion of $x$ contains only digits in $B_3$}   \},
		\]
		\[
		K_5=\{x\in [0,1]: \text{ some 5-ary expansion of $x$ contains only digits in $B_5$}   \},
		\]
		\[
		K_7=\{x\in [0,1]: \text{ some 7-ary expansion of $x$ contains only digits in $B_7$}   \}.
		\]
		By \cite[Section 7.1]{Fa}, we see that $\Haus K_3\times K_5\times K_7=\Haus A_3\times A_5\times A_7.$
		Now Conjecture \ref{RadialConj} tells us that $\Pi_{(0,0,0)}(A'_3\times A'_5\times A'_7)$ has non-empty interior. This proves the result.
	\end{proof}
	Theorem \ref{EGRSII} follows by using a similar argument.
	\begin{proof}[Proof of Theorem \ref{EGRSII}]
		The proof is very similar to the previous one. Let $p,q$ be two distinct odd primes. Then we see that $\log p/\log q$ is irrational. Consider the set $N^{B_p}_p\times N^{B_q}_q$ where
		\[
		B_p=\{0,1,\dots,(p-1)/2\}, B_q=\{0,1,\dots,(q-1)/2  \}.
		\]
		We also construct the sets
		\[
		A_p=\{x\in [1,p]: \text{ some $p$-ary expansion of $x$ contains only digits in $B_p$}   \},
		\]
		\[
			A_q=\{x\in [1,q]: \text{ some $q$-ary expansion of $x$ contains only digits in $B_q$}   \}.
		\]
		
		Now by Lemma \ref{THICK} (applied to suitable affine copies of $A_p,A_q$) we see that $S(A_p)=S(A_q)=1/2.$ We now use Theorem \ref{NEWHOUSE2}. Since $S(A_p)+S(A_q)=1,$ we see that the difference set $A_p-A_q$ is an interval (for the gap condition, observe that the largest gaps of $A_p,A_q$ are shorter than the unit interval). This says that $P_{(-1/\sqrt{2},1/\sqrt{2})}(A_p\cap A_q)$ is an interval. The convex hull of $A_p\cap A_q$ is a rectangle not contained in only one side of the line $\{x=y\}.$ This implies that $P_{(-1/\sqrt{2},1/\sqrt{2})}(A_p\cap A_q)$ contains $(0,0)$ as an interior point. Since the gap condition is preserved under linear perturbations close to the identity, as a result, we see that there is an non-trivial open set $O\subset S^1$ containing $(-1/\sqrt{2},1/\sqrt{2})$ such that if $v\in O,$ the linear projection $P_v(A_p\times A_q)\subset\mathbb{R}v$ is an interval containing $(0,0).$ This implies that $l_{v^{\perp}}\cap A_p\cap A_q\neq\emptyset.$ Thus we see that $\Pi_{(0,0)}(A_p\times A_q)$ contains non-empty interior around $(1/\sqrt{2}),1/\sqrt{2})$. We can now apply the same argument as in the proof of Theorem \ref{Number}. For each integer $k\geq 1,$ consider the line $l_k$ passing through the origin with direction vector $(1,q^{\{k\log p/\log q\}}).$ If $l_k\cap A_p\times A_q\neq\emptyset$ then we find a point $(x,y)\in l_k\cap A_p\times A_q.$ Next, consider the point
		\[
		(x',y')=(p^kx,q^{[k\log p/\log q]}y).
		\]
		As before, we see that
		\[
		[x']=[y']\in N^{B_p,B_q}_{p,q}.
		\]
		After replacing $A_p\times A_q$ with a suitable affine copy $A'_p\times A'_q$ if necessary as in the previous proof, we see that there is an interval $I\subset [0,1]$ such that whenever $\{k \log p/\log q\}\in I$, there is a number $n\in N^{B_p,B_q}_{p,q}\cap [p^{k},p^{k+1}].$  Since  $\{k \log p/\log q\}\in I$ happens for $k$ inside a subset of integers with positive density, we see that there is a $c>0$ and for all large enough integers $N,$ there are least $cN$ many intervals among
		\[
		[1,p), [p,p^{2}),\dots,[p^{N-1},p^N)
		\]
		intersecting $N^{B_p,B_q}_{p,q}.$ Thus the result follows by applying Kummer's theorem (Theorem \ref{thm: Kummer}).
	\end{proof}
	At this stage, Theorem \ref{EGRSIII} seems to be clear, at least under Schanuel's conjecture. We will first prove this theorem under Schanuel's conjecture and then explain how to get rid of it.
	\begin{proof}[Proof of Theorem \ref{EGRSIII}]
		Consider the set $N_3^{B_3}\times N_4^{B_4}\times N_5^{B_5}$ where
		\[
		B_3=\{0,1\}, B_4=\{0,1\}, B_5=\{0,1\}.
		\]
		Now we construct the self-similar sets
		\[
		A_3=\{x\in [1/3,2/3]: \text{ some $3$-ary expansion of $x$ contains only digits in $B_3$}   \},
		\]
		\[
		A_4=\{x\in [1/4,1/2]: \text{ some $4$-ary expansion of $x$ contains only digits in $B_4$}   \},
		\]
		\[
		A_5=\{x\in [1/5,2/5]: \text{ some $5$-ary expansion of $x$ contains only digits in $B_5$}   \}.
		\]
		By Lemma \ref{THICK} (applied to suitable affine copies of $A_3,A_4,A_5$), we see that $S(A_3)=1/2, S(A_4)=1/3$ and $S(A_5)=1/4.$ Thus we see that
		\begin{align}\label{eqn: @}
		S(A_3)+S(A_4)+S(A_5)=\frac{13}{12}>1.
		\end{align}
		The convex hulls of $A_3,A_4,A_5$ are $[1/3,1/2],[1/4,1/3],[1/5,1/4].$ Let $j_1,j_2$ be integers. For the time being, we treat $j_1, j_2$ as being general. Later on, we will see that for the proof of this theorem, it is enough to choose $j_1=j_2=1.$ Let $k\geq 1$ be an integer and let $H_{j_1,j_2,k}$ be the plane
		\[
		\{x+4^{-\{k\log 3/\log 4\}+j_1}y-5^{-\{k\log 3/\log 5\}+j_2}z=0\}.
		\]
		Suppose that $H_{j_1,j_2,k}\cap A_3\times A_4\times A_5\neq\emptyset.$ We take a point $(x,y,z)$ in this intersection. Consider the point
		\[
		(x',y',z')=(3^k x, 4^{[k\log 3/\log 4]+j_1} y, 5^{[k\log 3/\log 5]+j_2} z).
		\]
		Since we have
		\[
		x+4^{-\{k\log 3/\log 4\}+j_1}y-5^{-\{k\log 3/\log 5\}+j_2}z=0
		\]
		we see that
		\[
		3^{-k}x'+4^{-\{k\log 3/\log 4\}-[k\log 3/\log 4]}y'-5^{-\{k\log 3/\log 5\}-[k\log 3/\log 5]}z'=0
		\]
		Thus we have
		\[
		x'+y'-z'=0.
		\]
		We assume Schanuel's conjecture for the time being. Later on, we will remove this dependence. 
		
		From (\ref{eqn: @}) and the discussion above Corollary \ref{Slice}, we claim that there exist $j_1,j_2$ so that $H_{j_1,j_2,k}$ intersects $A_3\times A_4\times A_5$ for infinitely many integers $k.$ More precisely, consider the plane
		\[
		H=\{x+k_1y-k_2z=0\}
		\]
		where $k_1\in [4^{j_1-1},4^{j_1}], k_2\in [5^{j_2-1},5^{j_2}]$. It's normal vector is $(1,k_1,-k_2).$ Then, as long as
		\begin{align}\label{eqn: Q}
		1/2+k_1/3-k_2/5\geq 0\geq 1/3+k_1/4-k_2/4,
		\end{align}
		the Cartesian product of the convex hulls of $A_3,A_4,A_5$ intersects the plane $H.$ In this case, we see that $H\cap A_3\times A_4\times A_5$ is not empty if the minimal hull/maximal gap condition in Corollary \ref{Slice} is satisfied. The lengths of the convex hulls of $A_3,A_4,A_5$ are $1/6,1/12,1/20.$ The lengths of the largest gaps are $1/18,1/24,3/100.$ The minimal hull/maximal gap condition can be now written as
		\begin{align}\label{eqn: QQ}
		\min\{1/6,k_1/12,k_2/20\}\geq \max\{1/18,k_1/24,3k_2/100\}.
		\end{align}
		This is equivalent to the conditions
		\begin{align}\label{eqn: QQQ}
		k_1\in [2/3,4],k_2\in [10/9,50/9],k_1/k_2\in [9/25,6/5].
		\end{align}
		The region determined by conditions (\ref{eqn: Q}),(\ref{eqn: QQQ}) is illustrated in Figure \ref{fig:figure 1}.
		\begin{figure}[h]
			\includegraphics[width=0.6\linewidth, height=0.6\linewidth]{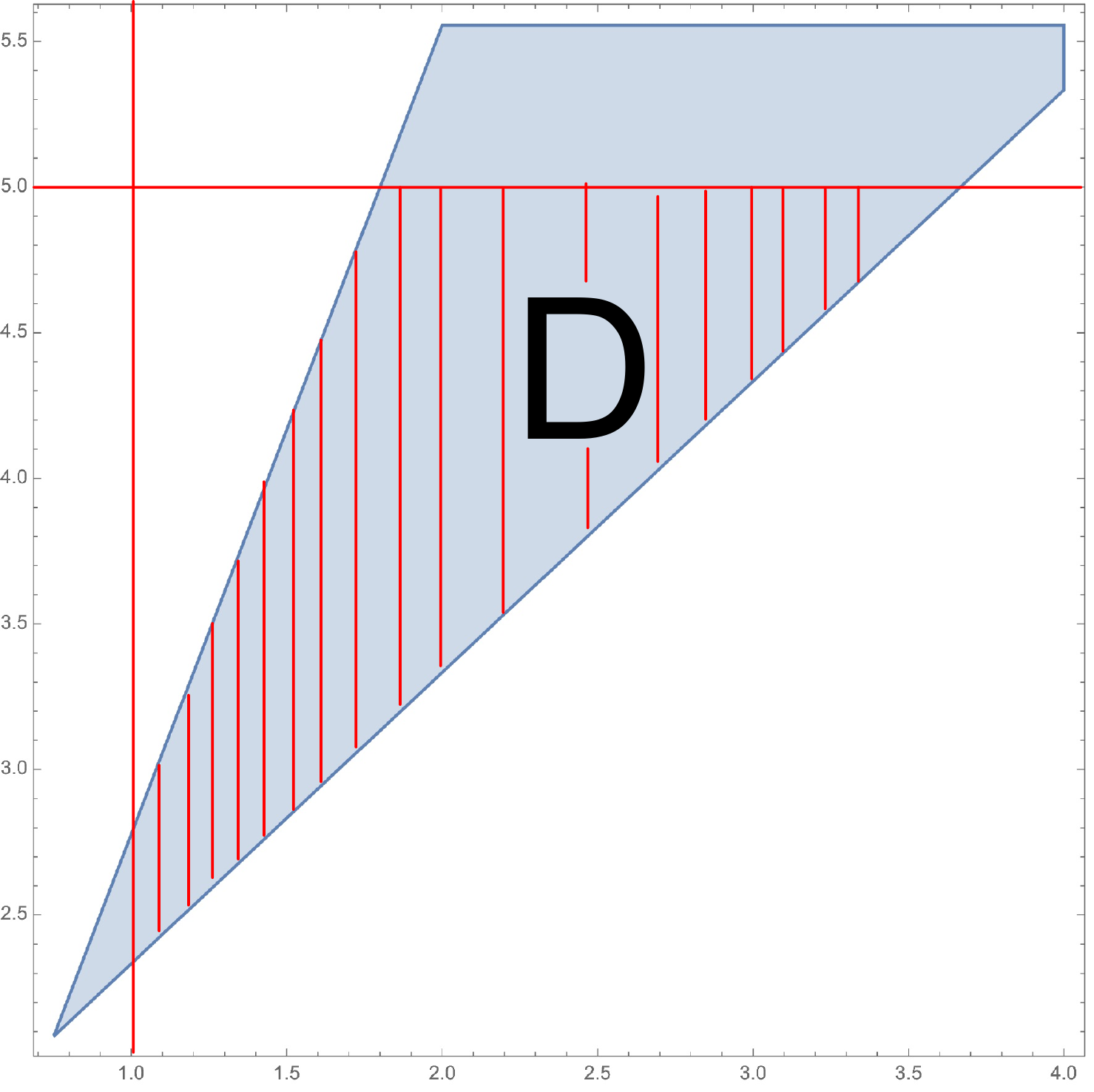} 
			\caption{Conditions (\ref{eqn: Q}),(\ref{eqn: QQQ}) (shaded region) and the set $D$ (covered with vertical lines)}
			\label{fig:figure 1}
		\end{figure}
		From  Figure \ref{fig:figure 1}, it is possible to see that $j_1=j_2=1$ will satisfy the claim. For later use, we write $D$ for the set of pairs $(k_1,k_2)$ with (\ref{eqn: Q}),(\ref{eqn: QQQ}) and $k_1\in [1,4],k_2\in [1,5].$ See the shaded region covered by vertical lines in Figure \ref{fig2}. Next, observe that the normal vector of $H_{1,1,k}$ is $$(1,4^{1-\{k\log 3/\log 4\}\}},-5^{1-\{k\log 3/\log 5\}\}}),$$ namely, in terms of $k_1,k_2$ above,
		\[
		k_1(k)=4^{1-\{k\log 3/\log 4\}\}}, k_2(k)=5^{1-\{k\log 3/\log 5\}\}}.
		\] 
		By Schanuel's conjecture, we see that the set
		\[
		\{(k\log 3/\log 4,k\log 3/\log 5)\mod\mathbb{Z}^2\}_{k\geq 1}\subset [0,1]^2
		\]
		is the orbit of an irrational rotation. Therefore, it is dense in $[0,1]^2.$ From here we see that there are infinitely many $k$ such that $(k_1(k),k_2(k))\in D.$ For such a $k,$ we have $H_{1,1,k}\cap A_3\times A_4\times A_5\neq\emptyset.$ This proves the claim with $j_1=j_2=1.$
		
		Now we know  that there are infinitely many  integers $k$ so that $H_{1,1,k}\cap A_3\times A_4\times A_5\neq \emptyset.$  Now $x',y',z'$ contains only $\{0,1\}$ in their $3,4,5$-ary expansions respectively. However, they might not be integers. If we take the integer parts we see that $[x'],[y'],[z']$ contains only $\{0,1\}$ in their $3,4,5$-ary expansions respectively and
		\[
		[x']+[y']=[z']+\{z'\}-\{x'\}-\{y'\}.
		\]
		The above equation tells us that
		\[
		\{z'\}-\{x'\}-\{y'\}
		\]
		is an integer. Observe that $\{x'\}, \{y'\}, \{z'\}$ are positive numbers whose $3,4,5$-ary expansions (respectively) contains only digits $0$ and $1$. Thus we see that
		\[
		\{x'\}\in (0,1/2], \{y'\}\in (0,1/3], \{z'\}\in (0,1/4].
		\] 
		This implies that
		\[
		-\frac{5}{6}<\{z'\}-\{x'\}-\{y'\}\leq \frac{1}{4}.
		\]
		The only integer in this range is $0.$ Thus, we see that
		\[
		\{z'\}-\{x'\}-\{y'\}=0
		\]
		and
		\[
		[x']+[y']=[z'].
		\]
		From here, the result follows from Schanuel's conjecture.
		
		Now, although we strongly do not believe it, it can be the case that $1$, $\log 3/\log 4$, $\log 3/\log 5$ are $\mathbb{Q}$-dependent. Then the rotation on $\mathbb{T}^2$ generated by the translation vector $(\log 3/\log 4, \log 3/\log 5)$ degenerates to an irrational rotation on a subtorus of dimension one. To be more precise, suppose that there are integers $k_1,k_2,k_3$ such that
		\[
		k_1\frac{\log 3}{\log 4}+k_2\frac{\log 3}{\log 5}+k_3=0.
		\]
		Neither $k_1$ nor $k_2$ is zero. For example, suppose that $k_1=0,$ then this implies that $1,\log 3,\log 5$ are not $\mathbb{Q}$-linearly independent, which is not possible. For convenience, we write $a=\log 3/\log 4, b=\log 3/\log 5.$  We see that
		\[
		k_1 a+k_2 b=-k_3.
		\] 
		We can find non-zero coprime integers $l_1,l_2$ such that
		\begin{align}\label{eqn: @@}
		l_1a+l_2b=c\in\mathbb{Q}.
		\end{align}
		Here $c\neq 0$ since otherwise $\log 5/\log 4=a/b\in\mathbb{Q}$ which is not the case. Since $\gcd (l_1,l_2)=1$ it is possible to find $S\in SL_2(\mathbb{Z})$ with entries 
		\[
		S=
		\begin{pmatrix}
		l_1 & l_2 \\
		l'_1 & l'_2
		\end{pmatrix}.
		\]
		Thus, $S$ is a well defined invertible map $\mathbb{T}^2\to\mathbb{T}^2.$ Let $k$ be an integer and consider the point $(ka,kb)\in\mathbb{T}^2.$ Now, we see that
		\begin{align*}
		S(ka,kb)&=&(l_1ka+l_2kb,l'_1ka+l'_2kb)\mod \mathbb{Z}^2\\
		&=& (kc,k(l'_1a+l'_2b))\mod \mathbb{Z}^2.
		\end{align*}
		It is simple to check that $l'_1a+l'_2b\notin\mathbb{Q}$ for otherwise both $a,b$ are rational, which is impossible. 
		
		We identify $\mathbb{T}^2$ with $[0,1]^2$ by letting
		\[
		(x,y)\mod \mathbb{Z}^2
		\] 
		be the unique point $(x',y')\in [0,1)^2$ such that $(x',y')-(x,y)\in\mathbb{Z}^2.$ Under this identification, the closure $\overline{\{S(ka,kb)\}_{k\geq 0}}$ is a union of vertical line segments in $[0,1]^2$.  In particular, it contains the vertical line $\{x=0\}\cap [0,1]^2.$ Performing $S^{-1},$ we see that $\overline{\{(ka,kb)\}_{k\geq 0}}$ contains the line
		\[
		L_0=S^{-1}(\{x=0\})=\{(x,y)\in\mathbb{T}^2: l_1x+l_2y=0\}.
		\]
		Although in general $L_0$ is a union of line segments in $[0,1]^2$, we will call it to be a line. In fact, it is actually a line in $\mathbb{T}^2.$ Since $c\neq 0,$ we write $c=p/q$ or $c=-p/q$ with positive  integers $p,q$ so that $\gcd(p,q)=1.$ The with similar arguments as above we see that  $\overline{\{(ka,kb)\}_{k\geq 0}}$ also contains
		\[
		L_{t}=\{(x,y)\in [0,1]^2: l_1x+l_2y\in\mathbb{Z}+t\}
		\]
		for $t\in\{1/q,2/q,\dots,(q-1)/q\}.$

		\begin{comment}
		Now $c$ is a rational number. If $c\notin\mathbb{Z},$ we write $c=p/q$ in its lowest form. Then $kc\mod 1$ contains all rational numbers in $[0,1]$  with nominator $q.$ For each such a rational number $c'$, $\overline{(ka,kb),k\geq 0}$ contains the line
		\begin{align*}
		L_{c'}=S^{-1}(\{x=c'\})=\{(x,y)\in [0,1]^2: l_1x+l_2y-c'\in\mathbb{Z}\}.\tag{@@@}
		\end{align*}
		\end{comment}
		Next, we use our knowledge of the set $D$ in Figure \ref{fig:figure 1}.  We want to choose an integer $k$ such that
		\[
		V_k=(4^{-\{k\log 3/\log 4\}+1},5^{-\{k\log 3/\log 5\}+1})\in D.
		\]
		The closure of $V_k,k\geq 1$ is a union of curves. After performing the logarithmic map $\mathrm{Log}:(x,y)\to (1-\log x/\log 4,1-\log y/\log 5)$ those curves become line segments. As we discussed in above, $\overline{\{V_k\}_{k\geq 1}}$ contains the line $L_0.$ 
		
		Recall that neither $l_1$ nor $l_2$ is zero. To find $V_k\in D,$ it is enough to find $k$ so that
		\[
		(\{k\log 3/\log 4\},\{k\log 3/\log 5\})\in \mathrm{Log}(D).
		\]
		To do this, it is enough to show that at least one of the lines \[L_t,t\in\{0,1/q,2/q,\dots,(q-1)/q\}\] intersect the interior of  $\Log(D).$ If this is done, then we can find infinitely many $k$ so that $V_k\in D.$ The region $\Log(D)$ is illustrated in Figure \ref{fig2}.
		\begin{figure}[h]
			\includegraphics[width=0.5\linewidth, height=0.5\linewidth]{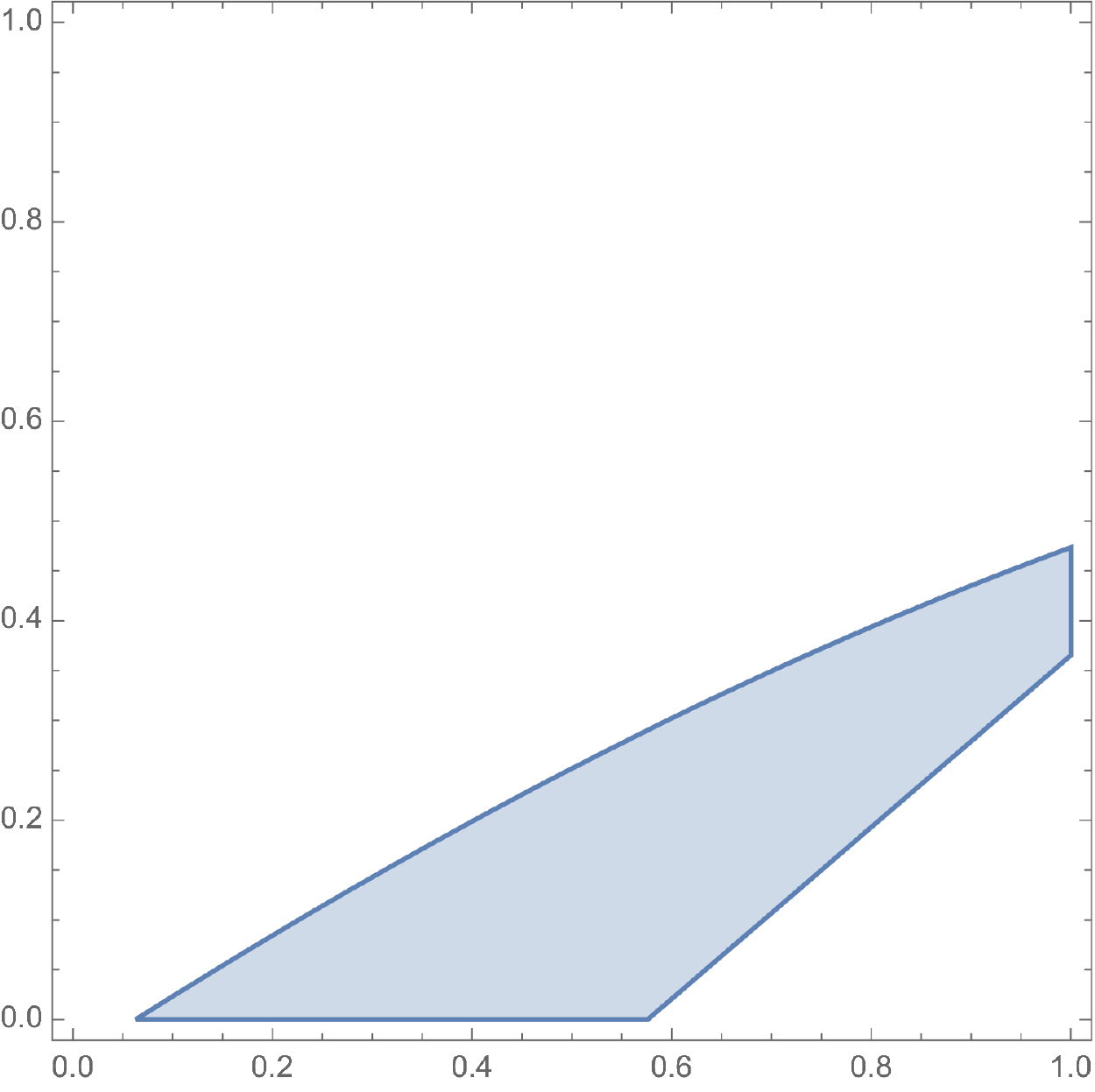} 
			\caption{The region $\Log(D)$}
			\label{fig2}
		\end{figure}
		
		In order that the line $L_0$ avoids $\Log(D)$, both $l_1,l_2$ cannot be too large. First, $\Log(D)\cap \{y=0\}$ has length bigger than $0.5.$ Since $L_0$ contains points $(0,1),(1/|l_1|,1),\dots,((|l_1|-1)/|l_1|,1),(1,1),$ we see that $|l_1|<2.$ Therefore $l_1=\pm 1.$ Without loss of generality, we can assume that $l_1=1.$ Then we can consider possible values for $l_2.$ We see that $l_2<0$. Otherwise if $l_2>0,$ $L_0$ must contain a line segment starting from $(1,0)$ to the boundary $\{x=0\}$ and this line segment must intersect the interior of  $\Log(D)$ unless it is horizontal which is not possible.  Similarly, as long as $l_2\leq -3,$ $L_0$ contains a line segment starting from a point $(1,r), r\in[0,1/3]$ to the boundary $\{x=0\}$ and this line segment must intersect the interior of  $\Log(D).$ Thus we see that all possible values for $l_2$ are $-1,-2.$ 
		
		Now, we consider all possible values for $c=p/q.$ We see that $(t,0)\in  L_t$ for $t\in\{1/q.\dots,(q-1)/q\}.$ Then with a similar argument as above we see that $q< 2.$ Therefore $q=1.$ This implies that $c\in\mathbb{Z}.$ 
		
		In conclusion, in order that  $V_k\in D$ for at most finitely many integers $k,$ we must have
		\[
		l_1a+l_2b\in\mathbb{Z}
		\]
		for $l_1=1,l_2=-1 \text{ or } -2.$ It can be checked directly that
		\[
		\frac{\log 3}{\log 4}-\frac{\log 3}{\log 5}=0.109875+e_1,
		\]
		\[
		\frac{\log 3}{\log 4}-2\frac{\log 3}{\log 5}=-0.572731+e_2,
		\]
		where $|e_1|,|e_2|<0.001.$ Thus, the numbers on the LHS above are not integers. From here, the proof is finished.

		\begin{comment}
		Therefore $|l_1|<3$ and $|l_2|<4.$ Now we need to take care of other lines $L_{c'}$ as in (@@@). If $c$ in (@@) is not an integer, we write it lowest form as $c=p/q.$ Then in order that all the lines of form $L_{c'}$ avoid $R,$ we need $|ql_1|<3,|ql_2|<4.$ If $c$ is an integer, the above still holds with $q=1.$
		
		After the above consideration, we are left to check that 
		\[
		l_1\frac{\log 3}{\log 4}+l_2\frac{\log 3}{\log 5}+\frac{p}{q}\neq 0,
		\]
		for $l_1,l_2$ being non-zero coprime integers, $p,q$ being coprime integers and
		\[
		|ql_1|<3,|ql_2|<4.
		\]
		Thus, we need to check that
		\[
		k_1\frac{\log 3}{\log 4}+k_2\frac{\log 3}{\log 5}
		\]
		are not integers for 
		\[
		(|k_1|,|k_2|)\in\{(1,1),(1,2),(1,3),(2,1),(2,2),(2,3),(2,2)\}.
		\]
		This can be checked numerically, for example
		\[
		\frac{\log 3}{\log 4}+\frac{\log 3}{\log 5}=1.475087+err
		\]
		where $|err|<10^{-6}.$ Thus it is not an integer\footnote{We used Mathematica 12 for those computations.}. Similarly, other choices of $(k_1,k_2)$ can be also checked. From here, the proof concludes.
		\end{comment}
	\end{proof}
	We discuss more on Theorem \ref{EGRSIII}. First, we remark that with the same proof, one can also show that the conclusion holds with $(3,4,5)$ being replaced with $(3,4,7).$ More generally, let $l\geq 1$ be an integer. Let $D=\{0,\dots,l\}.$ We want to consider the set ($N^D_{b_1}+N^D_{b_2})\cap N^D_{b_3}$ for $b_1,b_2,b_3\geq l+1.$ In this case, under Schanuel's conjecture, it is possible to show that
	$$(N^D_{b_1}+N^D_{b_2})\cap N^D_{b_3}$$
	in infinite as long as
	\[
	\frac{l}{b_1-1}+\frac{l}{b_2-1}+\frac{l}{b_3-1}\geq 1,
	\]
	and $1,\log b_1,\log b_2,\log b_3$ are $\mathbb{Q}$-linearly independent. Again, for specific cases, Schanuel's conjecture may not be required. For example, when $l=2,$ it is possible to check that the above statement holds for $(b_1,b_2,b_3)=(4,5,6).$
	
	Lastly, we are going to prove Theorem \ref{EGRS4}. Before that, we first show the following results.
	\begin{lma}\label{temp lma}
	Let $b\geq 3$ be an integer and let $B=\{1,2,\dots,b-1\}.$ Consider the set
	\[
	A^{B}_b=\{x\in [0,1]: \text{ some $b$-ary expansion of $x$ contains only digits in $B$} \}.
	\]
	Then we have $S(A^B_b)=(b-2)/(b-1).$ Moreover, let $k\in\{0,1,\dots,b-2\}$. Consider the set
	\[
	\tilde{A}^B_b=A^B_b\cap [0,(b-k)/b].
	\]
	Then $S(\tilde{A}^B_b)=S(A^B_b).$
	\end{lma}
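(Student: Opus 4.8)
The plan is to first pin down $S(A^B_b)$ by a digit-reflection reduction to Lemma~\ref{THICK}, and then to deduce $S(\tilde A^B_b)=S(A^B_b)$ by a self-similarity argument showing that truncating $A^B_b$ at a level-one boundary leaves every gap ratio unchanged. For the value of $S(A^B_b)$: the digit reversal $d\mapsto b-1-d$ is realised on $[0,1]$ by the reflection $x\mapsto 1-x$, under which $A^B_b$ is carried to $A^{B^*}_b$ with $B^*=\{0,1,\dots,b-2\}$. Since $C(\cdot)$ and $S(\cdot)$ are defined via an affine normalisation and are manifestly invariant under reflections (a reflection simply interchanges the roles of $b_L$ and $b_R$ at every gap), Lemma~\ref{THICK} applied with $l=b-2$ gives $S(A^B_b)=S(A^{B^*}_b)=(b-2)/(b-1)$, and hence $C(A^B_b)=b-2$.

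For the truncated sets, fix $k\in\{1,\dots,b-2\}$ (the case $k=0$ is trivial, since then $(b-k)/b=1$ and $\tilde A^B_b=A^B_b$). With $f_z(x)=x/b+z/b$ the generating maps of $A^B_b$, the convex hull $[1/(b-1),1]$ of $A^B_b$ yields $f_z(A^B_b)\subset(z/b,(z+1)/b)$ with $\min f_z(A^B_b)=z/b+1/(b(b-1))$, so
\[
\tilde A^B_b=A^B_b\cap[0,(b-k)/b]=\bigcup_{z=1}^{b-k-1}f_z(A^B_b).
\]
Thus $\tilde A^B_b$ is a disjoint union of $m:=b-k-1$ scaled copies of $A^B_b$, separated by $m-1$ \emph{level-one} gaps, each of length $1/(b(b-1))$; since every gap interior to a piece has length at most $1/(b^2(b-1))$, the level-one gaps are the longest gaps of $\tilde A^B_b$. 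A direct computation shows that for each level-one gap $I$ the one-sided searches in the definition of thickness terminate either at the next level-one gap or at the boundary of the convex hull of $\tilde A^B_b$, so that $b_L(I)=b_R(I)=(b-2)/(b(b-1))$ and $C(I)=b-2$.

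It remains to treat a gap $I=f_z(J)$ lying inside a piece $f_z(A^B_b)$, where $J$ is a gap of $A^B_b$; here I would show that the left (resp.\ right) search from $I$ inside $\tilde A^B_b$ is exactly $1/b$ times the left (resp.\ right) search from $J$ inside $A^B_b$, whence $C_{\tilde A^B_b}(I)=C_{A^B_b}(J)$. The point is that $f_z$ maps the bounded gaps of $A^B_b$ order-preservingly onto the bounded gaps of $\tilde A^B_b$ lying in the convex hull of $f_z(A^B_b)$, scaling all lengths by $1/b$; so if the search from $J$ in $A^B_b$ halts inside the convex hull of $A^B_b$ it transports verbatim, while if it runs off the edge of that convex hull, the corresponding search from $I$ runs off the edge of $f_z(A^B_b)$ and at once meets either an adjacent level-one gap or an unbounded gap of $\tilde A^B_b$ — in either case a gap of length at least $1/(b(b-1))>|J|/b=|I|$ — so the search halts precisely at the edge of $f_z(A^B_b)$, the relevant distance being exactly $1/b$ times the one in $A^B_b$. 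Combining the two classes of gaps, $C(\tilde A^B_b)=\min\{b-2,\ \inf_J C_{A^B_b}(J)\}=\min\{b-2,\ C(A^B_b)\}=b-2$, so $S(\tilde A^B_b)=(b-2)/(b-1)=S(A^B_b)$; the degenerate case $k=b-2$ (where $m=1$, there are no level-one gaps, and $\tilde A^B_b=f_1(A^B_b)$) is immediate from the affine invariance of $S$. The step I expect to be the main obstacle is precisely this bookkeeping: one must be sure that whenever a gap search inside a piece escapes that piece, the next object it meets really is a gap of $\tilde A^B_b$ that is no shorter than $|I|$, so that the search stops exactly at the piece boundary; the crude size bound $|I|\le 1/(b^2(b-1))<1/(b(b-1))$ is what makes this work.
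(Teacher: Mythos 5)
Your proof is correct and follows the same route as the paper: the first claim reduces to Lemma~\ref{THICK} via the reflection $x\mapsto 1-x$ (the paper uses $x\mapsto -x$, which is the same thing up to translation), and the second claim uses the level-one self-similar decomposition $\tilde A^B_b=\bigcup_{z=1}^{b-k-1}f_z(A^B_b)$ together with the observation that truncation at a branch boundary leaves the gap structure, hence the thickness, unchanged. Your gap-by-gap bookkeeping (separating the $m-1$ level-one gaps, each of length $1/(b(b-1))$ and thickness ratio $b-2$, from the gaps interior to the pieces, whose searches transport verbatim under the $1/b$-scaling) makes rigorous what the paper only sketches in two sentences; as a minor aside, the paper's parenthetical claim that the left endpoint $a$ of $\mathrm{Conv}(A^B_b)$ lies in $(0,1/b)$ is a typo — your value $a=1/(b-1)\in(1/b,2/b)$ is the correct one.
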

	\begin{rem}
	The second conclusion is special for our self-similar set $A^B_b.$ We need to cut $A^B_b$ at the 'right places'. For general Cantor sets, cutting out a small portion may decrease the thickness dramatically. 
	\end{rem}
	\begin{proof}
	    The first conclusion follows from Lemma \ref{THICK}. Indeed, by applying the symmetry $x\in\mathbb{R}\to -x\in\mathbb{R}$ to the set $A^B_b,$ we obtain a set for which Lemma \ref{THICK} can be used. 
	    
	    For the second conclusion, we can use the self-similarity of $A^B_b.$ First, let $[a,1]$ be the convex hull of $A^B_b$ (this determines the value for $a\in (0,1/b)$). Then we see that $[a,(b-k)/b]$ is the convex hull of $\tilde{A}^B_b$. Observe that $A^B_b\cap [1/b,2/b],\dots,A^B_b\cap [(b-k-1)/b,(b-k)/b]$ are translated copies of each other. The largest gaps of $\tilde{A}^B_b$ are located inside each of those copies. From here we see that
	    \[
	    S(\tilde{A}^B_b)=(b-2)/(b-1).
	    \]
	\end{proof}
	\begin{lma}\label{tmp lma 2}
	Let $\delta_1,\delta_2\in (0,1)$ numbers so that $\delta_1<\delta_2.$  Let $C\subset [0,1]$ be a Cantor set with $Conv(C)\neq [0,1],$ $S(C)>1-\delta_1$ and $|Conv(C)|>1-\delta_1.$ Consider the set $A_b^{B}$ as in Lemma \ref{temp lma}. Then for each $\delta_2\in (0,1),$ as long as $\delta_1$ is small enough, for all sufficiently large $b>1,$ there is a compact set $C'\subset C\cap A_b^{B}$ satisfying
	\[
	|Conv(C')|>1-\delta_2,
	\]
	\[
	S(C')>1-\delta_2.
	\]
	Moreover, there is a number $c>0$ such that for linear maps $T:x\to rx+t, T':x\to r'x+t'$ with $r,r'\in (1-c,1+c)$ and $t,t'\in (-c,c),$ there is a compact set $C'\subset T(C)\cap T'(A_b^{B})\cap [0,1]$ satisfying the above properties as well.
	\end{lma}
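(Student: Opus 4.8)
The plan is to realize $C\cap A_b^{B}$ (and its perturbed analogue) as the intersection produced by Theorem~\ref{NEWHOUSEMULTI}, feeding in a very thick, freely positionable truncation of $A_b^{B}$ supplied by Lemma~\ref{temp lma}. Fix $\delta_2\in(0,1)$, apply Theorem~\ref{NEWHOUSEMULTI} with $\delta=\delta_2/2$ to get $\epsilon>0$, and set $\delta_1=\tfrac12\min\{\delta_2,\epsilon\}$, so that $\delta_1<\delta_2$ and every admissible $C$ has $S(C)>1-\delta_1>1-\epsilon$. Write $Conv(C)=[\alpha,\beta]$, so $\beta-\alpha>1-\delta_1$ and, since $Conv(C)\neq[0,1]$, at least one of $\alpha>0$, $\beta<1$ holds. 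By Lemma~\ref{temp lma} and Lemma~\ref{THICK} we have $S(A_b^{B})=\tfrac{b-2}{b-1}$, $Conv(A_b^{B})=[\tfrac1{b-1},1]$, and for every $k\in\{0,\dots,b-2\}$ the truncation $W_k:=A_b^{B}\cap[0,\tfrac{b-k}{b}]$ has $S(W_k)=\tfrac{b-2}{b-1}$ and $Conv(W_k)=[\tfrac1{b-1},\tfrac{b-k}{b}]$.

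Next I would take $b$ large: large enough that $\tfrac1{b-1}<\min\{\epsilon,\delta_1\}$, and, in the case $\beta=1$, also large enough that $\tfrac1{b-1}<\alpha$ (this last threshold depends on $C$). I then choose the cut so that $Conv(W_k)$ and $Conv(C)$ \emph{cross}, i.e.\ neither interval contains the other, with overlap of length $>1-\delta_1-\tfrac3b$. If $\alpha>\tfrac1{b-1}$, let $\tfrac{b-k}{b}$ be the largest point of $\tfrac1b\mathbb{Z}$ strictly below $\beta$; then $\tfrac1{b-1}<\alpha$ and $\tfrac{b-k}{b}<\beta$, so $Conv(W_k)$ extends to the left of $[\alpha,\beta]$ while $[\alpha,\beta]$ extends to the right, and $Conv(W_k)\cap Conv(C)=[\alpha,\tfrac{b-k}{b}]$ has length $>\beta-\tfrac1b-\alpha>1-\delta_1-\tfrac1b$. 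If $\alpha<\tfrac1{b-1}$ (which forces $\beta<1$), let $\tfrac{b-k}{b}$ be the smallest point of $\tfrac1b\mathbb{Z}$ in $(\beta,1]$ (possibly $k=0$ and $\tfrac{b-k}{b}=1$); the roles reverse and $Conv(W_k)\cap Conv(C)=[\tfrac1{b-1},\beta]$ has length $>1-\delta_1-\tfrac1{b-1}$. In both cases the intersection of the hulls misses $\inf W_k=\tfrac1{b-1}$ or $\sup C=\beta$, so it contains neither $W_k$ nor $C$; and since its length exceeds $\delta_1$, while every bounded gap of $C$ has length $<2\delta_1$ times... rather, $<\,|Conv(C)|/C(C)<2\delta_1$, and every bounded gap of $W_k$ has length $\le\tfrac1{b(b-1)}$, neither $W_k$ nor $C$ lies in a gap of the other.

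With these verified, Theorem~\ref{NEWHOUSEMULTI} applies to $A=C$, $B=W_k$ and yields a compact $C'\subset C\cap W_k\subset C\cap A_b^{B}$ with $S(C')>1-\tfrac{\delta_2}{2}>1-\delta_2$ and $|Conv(C')|\ge(1-\tfrac{\delta_2}{2})(1-\delta_1-\tfrac3b)>1-\delta_2$ once $\delta_1$ is small and $b$ large; this is the first assertion. For the moreover part, normalized thickness is an affine invariant, so $S(T(C))=S(C)$ and $S(T'(W_k))=\tfrac{b-2}{b-1}$ stay $>1-\epsilon$; a strict crossing of two intervals together with a definite lower bound on their overlap, and the inequalities ``length $>\delta_1>$ all gaps'', all survive affine perturbations of size $c$ once $c$ is small compared with the (now fixed) margins, which depend only on $b,\alpha,\beta,\delta_1$. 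Shrinking $c$ further so that $T(C)$ and $T'(W_k)$ stay inside $[0,1]$ costs nothing: in the first case one may take $\tfrac{b-k}{b}\le\tfrac{b-1}{b}$, and in the second the only loss is a set of length $O(c)$ at the endpoint $1$, which does not affect the estimates. Then the same application of Theorem~\ref{NEWHOUSEMULTI} gives $C'\subset T(C)\cap T'(A_b^{B})\cap[0,1]$ with the two properties.

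The delicate step I expect to be the main obstacle is the positioning in the second paragraph: one must use that a truncation of $A_b^{B}$ is still nearly maximally thick only because of the arithmetic of $A_b^{B}$ (Lemma~\ref{temp lma}), and then check that it can always be slid along $\tfrac1b\mathbb{Z}$ so that its hull genuinely crosses $Conv(C)$ rather than nesting inside it. This last possibility is unavoidable exactly when $Conv(C)$ strictly contains $Conv(A_b^{B})$, i.e.\ when $\beta=1$ and $\alpha<\tfrac1{b-1}$, which is precisely why the threshold on $b$ must be permitted to depend on $C$.
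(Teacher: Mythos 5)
Your proposal follows essentially the same route as the paper: apply Theorem~\ref{NEWHOUSEMULTI} after using Lemma~\ref{temp lma} to produce a truncation $W_k=A_b^B\cap[0,(b-k)/b]$ whose hull genuinely crosses $Conv(C)$, and then observe that the crossing and gap conditions are stable under small affine perturbations. Your case split (by whether $\alpha>1/(b-1)$ or not, i.e.\ whether $\inf Conv(C)$ lies to the right of $\inf Conv(A_b^B)$) is a cosmetic variant of the paper's split on whether $\inf Conv(C)=0$; both rely on the same $C$-dependent threshold for $b$, which is fine since the quantifiers in the lemma permit it.

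The one spot where your argument is thinner than the paper's is the ``moreover'' part in your second case ($\alpha<1/(b-1)$, $\beta<1$). When $\beta>(b-1)/b$ you are forced to take $k=0$, so $W_0=A_b^B$ and $T'(A_b^B)$ can stick out of $[0,1]$ on the right. Your remark that ``the only loss is a set of length $O(c)$ at the endpoint~$1$, which does not affect the estimates'' is not quite right as stated: intersecting a Cantor set with a half-line at an arbitrary point can collapse the thickness, so you cannot simply drop an $O(c)$-piece. The paper's fix is to discard whole first-level branches of $T'(A_b^B)$ that leave $[0,1]$ (for $c$ small compared to $1/b$, at most one), which by self-similarity preserves the normalized thickness exactly and only shrinks the convex hull by $O(1/b)$; one must then also take $b$ large enough relative to $1-\beta$ so that the surviving hull still extends past $\beta$ and the crossing condition of Theorem~\ref{NEWHOUSEMULTI} survives. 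Your proof should say this explicitly (or, equivalently, increase the $C$-dependent threshold on $b$ so that $(b-1)/b>\beta$, eliminating the $k=0$ sub-case). With that patch the argument is complete and is the paper's argument in substance.
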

	\begin{proof}
	    We want to use Theorem \ref{NEWHOUSEMULTI}. The issue is that $Conv(C)\cap Conv(A_b^B)$ may contain $C$ or $A_b^B.$ Observe that $Conv(A_b^B)=[a_b,1]$ with $\lim_{b\to \infty} a_b\to 0$, and that $\lim_{b\to\infty} S(A_b^B)\to 1.$ So as long as $b$ is large enough, $C, A_b^B$ are not contained in each other's gaps. Let $Conv(C)=[c,d]\subset [0,1].$ 
	    
	    If $c=0,$ then $d<1$ and we see that $Conv(C)\cap Conv(A_b^B)$ contains neither $C$ nor $A_b^B.$ Using Theorem \ref{NEWHOUSEMULTI}, we conclude that as long as $\delta_1$ is small enough and $b$ is large enough, there is a compact set $C'\subset C\cap A^B_b$ such that
	    \[
	    S(C'), |Conv(C')|
	    \]
	    are both greater than $1-\delta_2.$
	    
	    If $c>0,$ by making $b$ sufficiently large, we can then assume that $a_b<c.$ As $|Conv(C)|>1-\delta_1$ we see that $d> 1-\delta_1.$ Let $k$ be the smallest integer with $(b-k)/b< d.$ Consider the set $\tilde{A}_b^B$ as in Lemma \ref{temp lma}. We see that
	    \[
	    |Conv(\tilde{A}_b^B)|=\frac{b-k}{b}-a_b>1-\delta_1-\frac{1}{b}-a_b.
	    \]
	    This number can be made arbitrarily close to $1-\delta_1$ by making $b$ sufficiently large (in a manner that depends on $\delta_1$). Now since $Conv(C)\cap Conv(\tilde{A}_b^B)$ contains neither $C$ nor $\tilde{A}_b^B,$ by Theorem \ref{NEWHOUSEMULTI}, we see that (for small enough $\delta_1$ and large enough $b$) there is a compact set $C'\subset C\cap \tilde{A}^B_b\subset C\cap A^B_b$ such that
	    \[
	    S(C'), |Conv(C')|
	    \]
	    are both greater than $1-\delta_2.$ 
	    
	    For the second conclusion, notice that  if $Conv(C)\cap Conv(A_b^B)$ does not contain $C, A_b^B,$ then as long as $c$ is small enough, $Conv(T(C))\cap Conv(T'(A_b^B))$ does not contain $T(C), T'(A_b^B).$ The same conclusion holds with $A_b^B$ being replaced with $\tilde{A}_b^B$. Thus it is possible to find a compact subset $C'\subset T(C)\cap T'(A_b^B)$ satisfying the desired properties. However, later on, it is more convenient to consider $T(C)\cap T'(A_b^B)\cap [0,1].$ We cannot simply take $C'\cap [0,1]$ as the thickness can decrease significantly. Instead, we consider the set $T'(A_b^B)\cap [0,1].$ The set $T'(A_b^B)$ is a self-similar set. The scaling ratio is still $1/b.$ The first level branches are intervals of length $r'/b.$ It can happen that some of the first level branches are not contained in $[0,1].$ If this is the case, we simply ignore them. We can choose $c$ to be so small that we only need to ignore at most one first level branch. As a result, as long as $c$ is small enough, we obtain a subset $\tilde{\tilde{A}}_b^B\subset T'(A_b^B)\cap [0,1]$ whose thickness is the same as $A_b^B$ and whose diameter is at least $1-3r'/b$. We can then consider the intersection between $T(C)$ and $\tilde{\tilde{A}}_b^B$ as in the case when $T,T'$ are the identity map. From here the result follows.
	\end{proof}

	\begin{proof}[Proof of Theorem \ref{EGRS4}] Let $3\leq b_1\leq\dots\leq b_k$ be (not necessarily distinct) integers. We need them to be sufficiently large in a manner that will be discussed later in the proof.
		
		For each $i\in\{1,\dots,k\}$ we construct the set
		\[
		A_i=\{x\in (0,\infty): \text{some $b_i$-ary expansion of $x$ does not contain digit $0$}\}.
		\]
		If $x$ has two possible $b_i$-ary expansions, we will take the finite expansion. Thus the $b_i$-ary expansion for $x>0$ is well defined.  It is sufficient to consider the set
		\[
		A=A_1\cap A_2\dots \cap A_k.
		\]
		We claim that it is enough to prove that $A$ is unbounded. Indeed, if $A$ is unbounded, then we can find arbitrarily large $x$ whose base $b_1,\dots,b_k$ expansions do not have digit zero.  We simply take the integer part of $x$ to obtain an integer $[x]$ whose base $b_1,\dots,b_k$ expansions do not have digit zero. Since $A$ is unbounded, the proof finishes.
		
		Now it is enough to prove that $A$ is unbounded. We define the integer sequence
		\[
		\mathbf{a}_j=(a_{1,j},\dots,a_{k,j}),j\geq 0
		\]
		by starting with
		\[
		\mathbf{a}_0=(1,\dots,1).
		\]
		For $j\geq 1,$ define $a_{1,j}=b_1a_{1,j-1}.$ For $i\in\{2,\dots,k\},$ we define $a_{i,j}=b_ia_{i,j-1}$ if $a_{i,j-1}/a_{1,j}<b^{-1}_i$ and $a_{i,j}=a_{i,j-1}$ otherwise. In this way, we have
		\[
		\mathbf{a}_j=(b^j_1,b^{[j\log_{b_2} b_1]}_2,\dots,b^{[j\log_{b_k} b_1]}_k)=b^j_1(1,b^{-\{j\log _{b_2} b_1\}}_2,\dots,b^{-\{j\log _{b_k} b_1\}}_k).
		\]
		The rotation on $\mathbb{T}^{k-1}$ with the rotation angle
		\[
		v=\left(\frac{\log b_1}{\log b_2},\dots,\frac{\log b_1}{\log b_k}\right)
		\]
		may not be an irrational rotation on $\mathbb{T}^{k-1}$.  Nonetheless, we claim that the orbit \[nv\mod\mathbb{Z}^{k-1},n\geq 0\] can be close to the origin, i.e. $d(nv,\mathbb{Z}^{k-1})$ can be arbitrarily small. In fact, $\overline{\{nv\mod\mathbb{Z}^{k-1}\}_{k\geq 0}}$ is a set of form ${T'}+P$ where $T'$ is a (possibly trivial) subtorus, and $P$ is a finite set of rational points containing $(0,\dots,0)$. It is possible that $T'$ is a singleton in which case $nv\mod\mathbb{Z}^{k-1}$ is periodic. In any case, $T'$ contains the origin and this proves the claim. 
		
		Let $\delta'\in (0,1/2).$ Let $\epsilon>0$ be a small number such that
		\begin{align}\label{delta}
		b^{-\epsilon}_i>1-\delta',i\in\{1,2,\dots,k\}.
		\end{align}
	Notice that there are infinitely many $n$ such that $d(nv,\mathbb{Z}^{k-1})<\epsilon.$ Let $n$ be such an integer. Then we see that
		\[
		\mathbf{a}_n=b^n_1(1,v_2,v_3,\dots,v_k),
		\]
		where $v_i\in (1-\delta',1].$ For each $i\in\{1,2,\dots,k\}$ let 
		\[B_{i,n}=A_i\cap [a_{i,j},b_{i} a_{i,j}].\] 
		We claim that if $b_1,b_2,\dots,b_k$ are  sufficiently large, then $B_{1,n}\cap B_{2,n}\cap \dots \cap B_{k,n}\neq \emptyset.$ The most convenient way to look at this is to rescale the whole situation by a factor of $b^{-n}_1.$ After doing this, $B_{1,n}$  fits to the interval $[1,b_1]$ while $B_{2,n},\dots, B_{k,n}$ fit to the intervals $[v_2,b_2v_2],\dots, [v_k,b_kv_k].$ As one can choose $\delta'$ to be arbitrarily small, $v_1,v_2,\dots,v_k$ can be arbitrarily close to one. It is enough to consider the situation with $1=v_1=v_2=v_3=\dots=v_k.$ Indeed, both the thickness and the gap conditions in Theorems \ref{NEWHOUSE}, \ref{NEWHOUSEMULTI} are preserved if one rescales and translates each Cantor set slightly. We will come back to this point shortly. If $v_1,v_2,\dots, v_k$ are exactly $1,$ we see that it is enough to consider the intersection
		\[
		C_1\cap C_2\dots\cap C_k,
		\]
		where $C_i=A_i\cap [1,b_i].$ The idea is to use Theorem \ref{NEWHOUSEMULTI} and Lemma \ref{tmp lma 2} inductively. We find large (in diameter) and thick (in normalized thickness) Cantor sets contained in $C_1\cap C_2, C_1\cap C_2\cap C_3,\dots, C_1\cap\dots\cap C_k.$ After that, we show that this procedure still goes through if one changes $C_1,\dots,C_k$ by applying affine maps. This helps us to restore the information when $v_1,\dots,v_k$ are almost but not exactly one.
		
		Now, $C_i$ is a self-similar set with contraction ratio $1/b_i.$ Indeed, 
		\[
		C_i\cap [1,2], C_i\cap [2,3],\dots, C_i\cap [b_i-1,b_i]
		\]
		are all scaled copies of $C_i.$ The largest bounded gaps of $C_i$ are of length $1/(b_i-1)$ and $S(C_i)=(b_i-2)/(b_i-1).$ 
		
		We now start to show the base step for the inductive argument. It is simple to check that $C_1\cap [1,2],C_2\cap [1,2]$ satisfy the gap condition of Theorem \ref{NEWHOUSEMULTI}. Thus for each $\delta>0,$ we can find a compact subset $C_{1,2}\subset C_1\cap C_2\cap [1,2]$ with $S(C_{1,2})>1-\delta$ as long as $b_1,b_2$ are sufficiently large. 
		
		We now show that the diameter of $C_{1,2}$ can be arbitrarily close to one at the cost of making $b_1,b_2$ sufficiently large. By Lemma \ref{tmp lma 2}, we see that as long as $b_1,b_2$ are large enough, there is a compact set $C_{1,2}\subset C_1\cap C_2\cap [1,2]$ with thickness and diameter both at least $1-\delta.$ 
		
		We can apply the same argument inside the intervals $[2,3],[3,4],\dots,[b_1-1,b_1].$ We now replace $C_{1,2}$ with the union of all such compact sets (in $[1,2], [2,3],\dots$).
		
		To summarize, for each $\delta>0,$ as long as $b_1,b_2$ are large enough, we can find a compact set $C_{1,2}\subset C_1\cap C_2$ so that for each $l\in\{1,\dots,b_1-1\}$, $S(C_{1,2}\cap [l,l+1]), |Conv(C_{1,2}\cap [l,l+1])|$ are at least $1-\delta.$		From the second part of Lemma \ref{tmp lma 2}, we conclude that for each $\delta>0,$ as long as $b_1,b_2$ are large enough, there is a number $c>0$ such that for all affine maps $T_1,T_2$ with scaling ratios $c$-close to one and translations $c$ close to zero, there is a compact set $C_{1,2}\subset T_1(C_1)\cap T_2(C_2)$ such that for each $l\in \{1,\dots,b_1-1\}$, $S(C_{1,2}\cap [l,l+1])$ and $|Conv(C_{1,2}\cap [l,l+1])|$ are both larger than $1-\delta$. Moreover, it is possible to see that $Conv(C_{1,2}\cap [l,l+1])\neq [l,l+1].$ This is because if $c$ is small enough, then $Conv(T_1(C_1)\cap [l,l+1])\neq [l,l+1], Conv(T_2(C_2)\cap [l,l+1])\neq [l,l+1]$. This finishes the base step.
		
		We now perform the induction. Let $j\in\{2,3\dots,k-1\}$. Suppose that for each $\delta>0,$ as long as $b_1,\dots,b_{j}$ are large enough, there is a number $c>0$ such that as long as $r_1,\dots,r_j\in (1-c,1+c)$, $t_1,\dots,t_j\in (-c,c)$ we can find a compact set
		\[
		C_{1,2,\dots,j}\subset \bigcap_{i=1}^j T_i(C_i)
		\]
		such that for each $l\in\{1,\dots,b_1-1\}$ we have that $Conv(C_{1,2\dots,j}\cap [l,l+1])\neq [l,l+1]$, and that
		\[
		S(C_{1,2,\dots,j}\cap [l,l+1]), |Conv(C_{1,2,\dots,j}\cap [l,l+1])|
		\]
		are greater than $1-\delta$ where $T_i,i\in\{1,\dots,j\}$ are linear maps $x\to r_ix+t_1.$ We call this statement to be $ST(j).$ Under $ST(j),$ inside each interval $[l,l+1],l\in\{1,\dots,b_1-1\}$, it is possible to use Lemma \ref{tmp lma 2} for $C_{1,2\dots,j}$ and $C_{j+1}$. From here we see that $ST(j+1)$ holds.

		 Finally, by induction, we see that $ST(k)$ holds. Thus there is a small number $c>0$ such that as long as $r_1,r_2,\dots,r_k\in (1-c,1+c)$ and $t_1,t_2,\dots,t_k\in (-c,c)$ we can find a compact set
		\[
		C_{1,2,\dots,k}\subset \bigcap_{i=1}^k T_i(C_i)
		\]
		 such that for each $l\in\{1,\dots,b_1-1\},$ $C_{1,2,\dots,k}\cap [l,l+1]$ has thickness at least $1-\delta$ and diameter at least $1-\delta,$ where $T_i:x\to r_ix+t_i$. In particular, we see that $\bigcap_{i=1}^k T_i(C_i)\neq\emptyset.$ Now we can choose $\delta'$ in (\ref{delta}) to be small enough according to $c$. We can then rescale this situation by $b^n_1$ and conclude that
		\[
		B_{1,n}\cap B_{2,n}\cap\dots\cap B_{k,n}\neq\emptyset.
		\]
		We proved that for sufficiently large $b_1,\dots,b_k,$ there are infinitely many $n\geq 0$ such that
		\[
		B_{1,n}\cap B_{2,n}\cap \dots\cap B_{k,n}\neq \emptyset.
		\]
		This implies that $A=A_1\cap A_2\cap \dots\cap A_k$ is unbounded, and the proof of the theorem is finished.
	\end{proof}
	
	\section{Acknowledgement}
	HY was financially supported by the University of Cambridge and the Corpus Christi College, Cambridge. HY has received funding from the European Research Council (ERC) under the European Union's Horizon 2020 research and innovation programme (grant agreement No. 803711). HY especially thank an anonymous referee for many instructive comments, which helped to improve the manuscript significantly.

	\bibliographystyle{amsplain}
	
\end{document}